\documentclass[12pt,reqno]{amsart}
\usepackage{amstext}
\usepackage{amsthm}
\usepackage{amsmath}
\usepackage{amssymb}
\usepackage{latexsym}
\usepackage{amsfonts}
\usepackage{graphicx}
\usepackage{texdraw}
\usepackage{graphpap}

\usepackage[pagebackref,hypertexnames=false, colorlinks, citecolor=red, linkcolor=red]{hyperref} 
\usepackage[backrefs]{amsrefs}

\usepackage{thmtools}
\usepackage{thm-restate}
\usepackage{cleveref}

\input txdtools

\bibliographystyle{plain}

\setlength{\evensidemargin}{0in}
\setlength{\oddsidemargin}{0in}

\setlength{\topmargin}{-.5in}
\setlength{\textheight}{9in}
\setlength{\textwidth}{6.5in}

\begin{document}

\newcommand{\norm}[1]{\ensuremath{\left\|#1\right\|}}
\newcommand{\abs}[1]{\ensuremath{\left\vert#1\right\vert}}
\newcommand{\ip}[2]{\ensuremath{\left\langle#1,#2\right\rangle}}
\newcommand{\p}{\ensuremath{\partial}}
\newcommand{\pr}{\mathcal{P}}

\newcommand{\pbar}{\ensuremath{\bar{\partial}}}
\newcommand{\db}{\overline\partial}
\newcommand{\D}{\mathbb{D}}
\newcommand{\B}{\mathbb{B}}
\newcommand{\Sp}{\mathbb{S}}
\newcommand{\T}{\mathbb{T}}
\newcommand{\R}{\mathbb{R}}
\newcommand{\Z}{\mathbb{Z}}
\newcommand{\C}{\mathbb{C}}
\newcommand{\N}{\mathbb{N}}

\newcommand{\supp}{\operatorname{supp}}
\newcommand{\spa}{\operatorname{span}}

\def\to{\rightarrow}
\def\La{{\Leftarrow}}
\def\Ra{{\Rightarrow}}

\def\no{\noindent}

\def\PWa{\mathcal{P}\mathcal{W}_a}

\def\AA{{\mathcal{A}}}
\def\BB{{\mathcal B}}
\def\CC{{\mathcal C}}
\def\DD{{\mathcal{D}}}
\def\EE{{\mathcal E}}
\def\FF{{\mathcal F}}
\def\GG{{\mathcal{G}}}
\def\HH{{\mathcal H}}
\def\II{{\mathcal{I}}}
\def\JJ{{\mathcal{J}}}
\def\LL{{\mathcal{L}}}
\def\MM{{\mathcal M}}
\def\NN{{\mathcal N}}
\def\PP{{\mathcal{P}}}
\def\QQ{{\mathcal{Q}}}
\def\RR{\mathcal{R}}
\def\SS{{\mathcal{S}}}
\def\TT{{\mathcal{T}}}
\def\UU{{\mathcal{U}}}
\def\XX{{\mathcal{X}}}
\def\YY{{\mathcal{Y}}}
\def\ZZ{{\mathcal{Z}}}

\def\WW{{\mathcal W}}
\def\KK{{\mathcal K}}
\def\BM{{\mathcal{BM}}}

\def\a{{\alpha}}
\def\e{{\epsilon}}
\def\d{{\delta}}
\def\D{{\Delta}}
\def\l{{\lambda}}

\def\o{{\omega}}
\def\sL{{\sigma_{\Lambda}}}
\def\sG{{\sigma_{\Gamma}}}

\def\lan{{\lambda_{n}}}
\def\lam{{\lambda_{m}}}
\def\lak{{\lambda_{k}}}
\def\lmn{{\lambda_{mn}}}
\def\gmn{{\gamma_{mn}}}

\def\k0{{\xi_{0}}}
\def\ms{{\mu^*}}

\def\Th{{\Theta}}
\def\L{{\Lambda}}
\def\G{{\Gamma}}

\def\lfl{{\left \lfloor}}
\def\rfl{{\right \rfloor}}
\def\mh{{\hat{\mu}}}
\def\nh{{\hat{\nu}}}
\def\sh{{\hat{\sigma}}}
\def\mup{{\sigma^+}}
\def\mum{{\sigma^-}}
\def\nup{{\nu^+}}
\def\num{{\nu^-}}

\def\const{\text{\rm const}}
\def\ti{\tilde}

\title{Determinacy for measures}
\subjclass[2010]{}
\keywords{}

\begin{abstract} We study the general moment problem for measures on the real line, with polynomials replaced by more general spaces
of entire functions. As a particular case, we describe measures that are uniquely determined by a restriction of their Fourier transform
to a finite interval. We apply our results to prove an extension of a theorem by Eremenko and Novikov on the frequency of oscillations of
measures with a spectral gap (high-pass signals) near infinity.
\end{abstract}


\author[M. Mitkovski]{Mishko Mitkovski$^\dagger$}
\address{Mishko Mitkovski, Department of Mathematical Sciences\\ Clemson University\\ Clemson, SC USA }
\email{mmitkov@clemson.edu}
\urladdr{http://people.clemson.edu/$\sim$mmitkov}
\thanks{$\dagger$ Research supported in part by National Science Foundation DMS grant \# 1101251.}

\author[A. Poltoratski]{Alexei Poltoratski$^\ddagger$}
\address{Alexei Poltoratski, Department of Mathematics\\ Texas A\&M University\\ College Station, TX USA}
\email{alexeip@math.tamu.edu}
\urladdr{www.math.tamu.edu/$\sim$alexeip}
\thanks{$\ddagger$ Research supported in part by National Science Foundation DMS grant \# 1101278. }

\subjclass[2000]{}
\keywords{Determinacy, spectral gap, Fourier transform, moment problem}

\maketitle

\theoremstyle{plain}
\newtheorem{theorem}{Theorem}[section]
\newtheorem{lemma}[theorem]{Lemma}
\newtheorem{corollary}[theorem]{Corollary}
\newtheorem{proposition}[theorem]{Proposition}
\newtheorem{example}[theorem]{Example}
\newtheorem{problem}[theorem]{Problem}
\newtheorem{remark}[theorem]{Remark}

\theoremstyle{definition}
\newtheorem{definition}[theorem]{Definition}
\newtheorem*{remark*}{Remark}

\newenvironment{proofA}
{{\noindent{\bf \textit{Proof of Theorem 1.2:}}}}{\hfill$\Box$}

\numberwithin{equation}{section}
\section{Introduction}

For a finite measure $\mu$ on the real line we define its Fourier transform as
$$\mh(x)=\int e^{ixt}d\mu(t).$$
We say that a positive finite measure $\mu$ is $a$-determinate if there exists no other positive finite measure $\nu$ such that  Fourier transforms of $\mu$ and $\nu$ coincide on $[-a,a]$.
One of the  problems that we consider here can be formulated as follows.

Suppose that we are given a finite positive measure $\mu$. For a given $a>0$, how can we tell whether $\mu$ is $a$-determinate?
This question can be viewed as the analog of the determinacy part of the classical moment problem, which received a lot of attention throughout the years.  The study of the trigonometric version of the  problem stated above originates from  a classical paper of Krein \cite{Krein3}.
Despite a number of deep results, it appears that no explicit solution is known, even in the classical polynomial case.

In this paper we consider the general case of determinacy in the moment problem, with polynomials (exponentials) replaced with broader classes of entire functions.
The correct space for such purposes seems to be the de Branges space, which can become a space of polynomials or a Paley-Wiener space with a proper choice of a generating Hermite-Biehler function $E$.

We apply our methods to generalize classical theorems by Beurling, de Branges, and Levinson on the absence of gaps in the support of the Fourier transform (spectral gaps) of a finite measure on $\R$. Another application is a refinement of a theorem by Eremenko and Novikov \cite{EN, EN1}
on the oscillations of high-pass signals.

Let $\mu$ be a finite real measure on $\R$ with the property that its Fourier transform $\hat\mu$ vanishes on the interval $[-a,a]$ for some $a>0$.
Such measures are called high-pass signals in electrical engineering. One of the classical problems  is to estimate
how fast a high-pass signal should oscillate near infinity. Significant progress in this direction was provided by the following result.

\begin{restatable}{theorem}{ErNov}\cite{EN, EN1}
 If $\sigma$ is a nonzero signed measure with spectral gap $(-a/2, a/2)$  then the number of sign changes $s(r,\sigma)$ of $\sigma$ on the interval $(0,r)$ satisfies
$$\liminf_{r\to\infty}\frac{s(r,\sigma)}{r}\geq \frac{a}{2\pi}.$$
\end{restatable}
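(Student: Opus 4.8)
The plan is to reduce the statement about sign changes of the signed measure $\sigma$ to a determinacy statement for a positive measure, and then apply the general machinery of the paper. Suppose toward a contradiction that $\liminf_{r\to\infty} s(r,\sigma)/r < a/(2\pi)$. Then there is a sequence $r_k\to\infty$ and a density $d<a/(2\pi)$ such that $\sigma$ has at most $d r_k$ sign changes on $(0,r_k)$; a symmetric argument (or passing to $\sigma(-\cdot)$) handles the negative axis, so we may arrange that $\sigma$ has a sparse set of sign changes on a sequence of symmetric intervals $(-r_k,r_k)$. Let $\{t_j\}$ be the (countable, locally finite) set of sign-change points of $\sigma$ on $\R$, ordered so that $\sigma$ is alternately $\pm$ on the complementary intervals. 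The key construction is the entire function
$$ B(z) \;=\; \prod_j \left(1-\frac{z}{t_j}\right), $$
suitably normalized (Weierstrass factors as needed), whose real zeros are exactly the sign-change points. By construction $B$ has constant sign on each interval of constancy of $\sigma$, so $B\,d\sigma \ge 0$ (after fixing an overall sign); write $\mu = B\,d\sigma$, a positive measure — modulo the issue, discussed below, of whether $\mu$ is finite, which one forces by dividing $B$ by a fixed function with enough decay, e.g. replacing $B$ by $B(z)/(z^2+1)^N$ or by incorporating a convergence-producing multiplier.

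The second step is to translate the spectral gap of $\sigma$ into membership of $\mu$ in an appropriate de Branges space and to quantify the ``width'' of that space via the density of $\{t_j\}$. Since $\hat\sigma$ vanishes on $(-a/2,a/2)$, the measure $\sigma$ is, heuristically, orthogonal to the Paley–Wiener space $PW_{a/2}$; multiplying by the polynomial-like factor $B$ whose zero density is $d<a/(2\pi)$ should, by a Krein-type counting argument, keep the relevant transform supported away from an interval of length $\approx 2\pi\cdot(a/(2\pi) - d\cdot\pi)$ — more precisely, the point is that the exponential type of $B$ restricted to the real axis is $0$ but its ``effective spectral width'' is governed by $\pi d$, and $\pi d < a/2$. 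Thus $\mu = B\,d\sigma$ is a nonzero \emph{positive} finite measure whose Fourier transform, or more generally whose pairing against the de Branges space $\HH(E)$ generated by an $E$ of the correct type, vanishes on an interval of positive length; equivalently $\mu$ fails to be $a'$-determinate for a suitable $a'>0$. This is where the general determinacy criterion proved earlier in the paper is invoked: it must say that a positive finite measure which is annihilated (in the appropriate transform sense) on an interval cannot exist unless it is zero — contradicting $\mu \ne 0$.

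The main obstacle, and the place where the argument needs real care rather than bookkeeping, is making precise the passage ``$\hat\sigma$ vanishes on $(-a/2,a/2)$ and $B$ has zero-density $d$'' $\Longrightarrow$ ``$B\,d\sigma$ is annihilated by a de Branges space of the right size.'' The difficulty is twofold: first, $B$ need not have any growth control off the real line, so $B\,d\sigma$ need not be finite and one must choose convergence factors and a Hermite–Biehler function $E$ compatibly, controlling the type of $E$ from the \emph{liminf} density of the zeros $t_j$ (this is exactly the subtlety that makes the theorem a liminf statement and not a limsup one); second, one must handle the irregular spacing of the $t_j$ — the counting hypothesis only bounds their number on $(0,r_k)$ along a subsequence, not their distribution, so the relevant density notion must be a one-sided, subsequential one, and the de Branges / Beurling–Malliavin type estimates used must be robust enough to accept it. I expect the proof to proceed by first establishing the clean statement for positive measures and a fixed space (the determinacy theorem cited from earlier), then building $E$ and the multiplier from the sign-change data with just enough regularity extracted from the subsequence $r_k$, and finally reading off the contradiction; the bridge lemma relating sign changes to positivity of $B\,d\sigma$ is elementary, but the quantitative type/density matching is the crux.
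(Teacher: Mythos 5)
Your argument runs in the opposite direction from the paper's, and it has a genuine gap at its central step. You assume $\liminf s(r,\sigma)/r<a/(2\pi)$ and try to build a single entire function $B$ vanishing exactly at the sign-change points $\{t_j\}$, of exponential type less than $a/2$ and bounded on $\R$, so that $B\,d\sigma\geq 0$. But the hypothesis only bounds the counting function of the sign changes along a subsequence $r_k\to\infty$; between $r_k$ and $r_{k+1}$ the sign changes may cluster with arbitrarily large local density, so the upper density (and the exterior Beurling--Malliavin density) of $\{t_j\}$ can be infinite, and the canonical product over such a zero set need not have finite exponential type at all. Even for a zero set of regular density $d$, boundedness of the product on $\R$ is delicate -- this is exactly what the energy condition in the definition of $d$-uniform sequences and the double-zero lemma of Section 5 are designed to handle. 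You flag this difficulty yourself, but it is the entire content of the theorem, not bookkeeping, and nothing in the proposal resolves it. There is also a smaller logical slip at the end: if you really had a nonzero positive finite measure $\mu=B\,d\sigma$ whose Fourier transform vanishes on an interval around the origin, you would not need any determinacy criterion, since $\hat\mu(0)=\mu(\R)>0$ is already a contradiction. Indeterminacy of a positive measure is a different, perfectly consistent property (it means $\mu$ is the positive part of some measure with a spectral gap), so invoking the determinacy machinery there is a red herring.

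The paper's proof goes forward rather than by contradiction and is much shorter: applying Corollary~\ref{poissonsummablecor} to $\BB_E=\PP\WW_{a/2}$ (whose phase function is $\frac a2 t$), one obtains directly a sequence $\L\subset\supp\sigma^+$, interlacing with $\supp\sigma^-$ by Lemma~\ref{Krein}, whose counting function satisfies $2\pi n_{\L}(t)-\frac a2 t=\tilde h(t)$ with $h\in L^1(\Pi)$; the weak-type estimate for the conjugate function then gives $n_{\L}(r)/r\to a/(2\pi)$, and the interlacing gives $s(r,\sigma)\geq n_{\L}(r)$ up to a bounded error. No positivizing multiplier and no Beurling--Malliavin input are needed. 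If you want to pursue your direction (which is closer in spirit to the original Eremenko--Novikov argument), the missing work is precisely the quantitative construction of a type-controlled, real-bounded multiplier from a liminf-only bound on the sign changes, and that is substantially harder than the route the paper takes.
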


Here the number of sign changes $s(r,\sigma)$ can be understood in the usual sense for absolutely continuous measures with continuous densities and in any reasonable sense for more general
measures, see section \ref{last}. This theorem solves an old problem by Grinevich from 1964 included in V. Arnold's list of problems (2000 \cite{Arn}). A problem of achieving a better understanding of the asymptotic interplay between the positive and negative parts of a high-pass signal, including a possibility of replacing the inequality in the last theorem with an equality of some sort, served as an initial motivation for the present paper.

In this article we obtain a short and self-contained proof of the last statement, that does not rely on any of the  deep tools of Harmonic Analysis
often applied in this area, such as the Beurling-Malliavin theory or advanced estimates of singular integrals. At the same time, with the use of such advanced tools, we are able to prove the following stronger statement.

 For a finite real measure $\mu$ on $\R$ we denote by $\mu^+$ and $\mu^-$ its mutually singular positive and negative parts, $\mu=\mu^+-\mu^-,\ \mu^+\perp\mu^-$.
 If $A$ and $B$ are two closed subsets of $\R$,  let
$\MM_a(A,B)$ be the class of all finite real measures $\sigma$ with a spectral gap $[-a,a]$ such that $\supp \sigma^+\subset A$ and $\supp \sigma^-\subset B$. We define the gap characteristic of a pair of closed subsets $A$ and $B$ of $\R$ as
$$G(A,B)=\sup\{a>0: \MM_a(A,B)\neq\{0\}\}.$$

\begin{restatable}{theorem}{oscillation} \label{oscillation} For any closed sets $A, B\subset\R$,
$$ G(A,B)=\pi\sup\{d: \exists\ \   d\text{-uniform sequence }\{\lambda_n\},\ \{\lambda_{2n}\}\subset A, \{\lambda_{2n+1}\}\subset B\}.$$
\end{restatable}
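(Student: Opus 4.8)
The plan is to prove the two inequalities separately; write $D(A,B)$ for the supremum on the right-hand side. Since $G(A,B)=\sup\{a:\MM_a(A,B)\neq\{0\}\}$, it suffices to show that a $d$-uniform sequence with $\{\lambda_{2n}\}\subset A$ and $\{\lambda_{2n+1}\}\subset B$ produces a nonzero element of $\MM_a(A,B)$ for every $a<\pi d$, and conversely that a nonzero element of $\MM_a(A,B)$ produces such a $d$-uniform sequence for every $d<a/\pi$.

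\emph{The bound $G(A,B)\ge\pi\,D(A,B)$.} Let $\Lambda=\{\lambda_n\}$ be $d$-uniform with $\{\lambda_{2n}\}\subset A$, $\{\lambda_{2n+1}\}\subset B$, and fix $a<\pi d$. I want an atomic $\sigma\in\MM_a(A,B)\setminus\{0\}$ carried by $\Lambda$ with $\supp\sigma^+\subset\{\lambda_{2n}\}$ and $\supp\sigma^-\subset\{\lambda_{2n+1}\}$. The model case $\Lambda=\Z$ is explicit: for any $f\in L^2(\T)$ supported on an arc of length $\pi-a$ put $\sigma=\sum_n(-1)^n|\hat f(n)|^2\,\delta_n$; then $\hat\sigma(x)=\sum_n(-1)^n|\hat f(n)|^2 e^{inx}$ is, up to a shift by $\pi$, the $2\pi$-periodic convolution $f*\tilde f$ with $\tilde f(\theta)=\overline{f(-\theta)}$, hence supported on an arc of length $2\pi-2a$ about $\pi$ and therefore vanishing on $[-a,a]$, while its Fourier coefficients alternate sign with prescribed positive modulus. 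For general $d$-uniform $\Lambda$ one runs the same construction inside the de Branges space $\HH(E)$ of a Hermite--Biehler function $E$ of exponential type $\pi d$ fitted to the interlacing pair $\{\lambda_{2n}\},\{\lambda_{2n+1}\}$ (which exists precisely because $\Lambda$ is $d$-uniform); the role of $f*\tilde f$ is played by a squared $\HH(E)$-reproducing-kernel combination modulated by a Paley--Wiener factor of type $<\pi d-a$, and $d$-uniformity supplies the summability making $\sigma$ a finite measure. In all cases $\sigma\in\MM_a(A,B)\setminus\{0\}$, so $G(A,B)\ge a$ for every $a<\pi d$, and taking the supremum over $d$ gives the inequality.

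\emph{The bound $G(A,B)\le\pi\,D(A,B)$.} Fix $a<G(A,B)$, a nonzero $\sigma\in\MM_a(A,B)$, and $d<a/\pi$; I must build a $d$-uniform sequence with even terms in $A$ and odd terms in $B$. The first input is the Eremenko--Novikov oscillation estimate (the restated theorem above), which in the present normalization, gap $[-a,a]$, says that $\sigma$ changes sign with lower density at least $a/\pi$ on each half-axis; moreover, since $\sigma^+\perp\sigma^-$, between two consecutive sign changes there is a point of $\supp\sigma^+\subset A$ and, alternately, a point of $\supp\sigma^-\subset B$. This alone only yields a sequence of the right average density, so the second input is a regularization: to $\sigma$ and its spectral gap one associates the meromorphic function with nonnegative imaginary part coming from the inner--outer factorization that implements the gap (equivalently, the phase function $\phi$ of the model space generated by $\sigma$), and the Beurling--Malliavin multiplier theorem together with the singular-integral estimates show that the gap forces $\phi(x)\ge(a/\pi-o(1))|x|$ with enough regularity that for every $d<a/\pi$ a level set $\{x:\phi(x)\equiv c\pmod{\pi}\}$ is $d$-uniform. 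Because the imaginary part of the relevant Cauchy-type transform of $\sigma$ is $\ge0$ near $\supp\sigma^+$ and $\le0$ near $\supp\sigma^-$, consecutive points of this level set separate $A$ from $B$; a concluding pigeonhole-and-perturbation step, using that $A,B$ are closed and that the density $a/\pi$ is only attained in the limit, replaces the level set by a slightly sparser $d$-uniform subsequence whose even terms lie in $A$ and odd terms in $B$. Letting $a\uparrow G(A,B)$ and $d\uparrow a/\pi$ yields $D(A,B)\ge G(A,B)/\pi$.

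\emph{Main obstacle.} The first inequality is essentially a de Branges-space construction and is made concrete by the case $\Lambda=\Z$. The difficulty is concentrated in the second inequality and, within it, in the passage from the Eremenko--Novikov \emph{counting} statement to the existence of a genuinely \emph{uniform} interlaced sequence whose two halves land inside the prescribed closed sets $A$ and $B$. This is exactly the step that requires the harmonic-analytic tools flagged in the introduction --- Beurling--Malliavin-type multipliers and estimates of singular integrals --- and it is also where Theorem~\ref{oscillation} strengthens, rather than merely reformulates, the Eremenko--Novikov theorem.
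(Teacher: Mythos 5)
Your two-inequality skeleton matches the paper's, and your $\Lambda=\Z$ model computation for the easy direction is correct, but both halves have genuine gaps at exactly the points you compress into a sentence. For $G(A,B)\ge\pi D(A,B)$: the assertion that a $d$-uniform interlaced sequence admits a Hermite--Biehler function of type $\pi d$ ``fitted to'' it, so that the $\Z$ model transfers, is unproved and is in fact the crux. What \cite{PolGap} actually supplies is a finite measure $\sigma_1$ with spectral gap $2\pi d-\e$ supported on $\Lambda$ --- but possibly on a proper subsequence $\L'\subset\Lambda$; an extreme-point argument makes $\sigma_1$ alternate in sign along $\L'$, yet consecutive points of $\L'$ can both lie in $A$ (or both in $B$) whenever an odd number of points of $\Lambda$ is skipped between them. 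The paper repairs this parity defect by multiplying $\sigma_1$ by a real $F\in\PP\WW_\e$ vanishing at one skipped point per bad interval (Beurling--Malliavin, since the skipped set has exterior density at most $\e$), and then needs a separate technical lemma to convert the unwanted extra real zeros of $F$ into double zeros without losing exponential type or boundedness on $\R$, so that no spurious sign changes are introduced. None of this machinery is visible in your ``squared reproducing kernel modulated by a Paley--Wiener factor,'' and without it the inequality is not established.

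For $G(A,B)\le\pi D(A,B)$: you assert that level sets of the phase function are $d$-uniform for every $d<a/\pi$, but $d$-uniformity is not merely a density statement --- it includes the energy condition, and verifying it for the support of a gap measure is precisely the necessity half of the main theorem of \cite{PolGap}; it does not follow from the Eremenko--Novikov liminf estimate plus an appeal to ``regularity and singular-integral estimates.'' The alternation between $A$ and $B$ also should not be extracted from sign considerations on a Cauchy transform: the paper gets it structurally by replacing $\sigma$ with a nonzero extreme point of $\MM_a(A,B)$, whose support is automatically discrete with $\supp\nu^+$ and $\supp\nu^-$ interlaced (Lemma~\ref{Krein}), so the even/odd-indexed points land in $A$ and $B$ by construction; one then extracts an $(a/\pi-\e)$-uniform subsequence via \cite{PolGap} and re-alternates it, the $\e$-losses being absorbed by the suprema on both sides (with Lemma~\ref{uff} available when an exact-density subsequence is needed). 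As written, your proof correctly diagnoses where the difficulty sits but defers essentially all of it to unproved claims.
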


The definition of a $d$-uniform sequence was given in \cite{PolGap} and is discussed in the next section.
One can say that, in a certain sense made precise in section \ref{prelim}, $d$-uniform sequences are similar to arithmetic progressions with a difference term equal to $1/d$.
Such sequences are discrete (i.e. have no accumulation points on $\R$) and are enumerated in the natural increasing order.

Theorem~\ref{oscillation} follows from a general discussion of $a$-determinacy of measures and properties of extreme indeterminate measures
that we undertake in sections \ref{Riesz} and \ref{extreme}. First, we prove a generalization of a classical result by M. Riesz for the
polynomial moment problem that characterizes indeterminate measures in terms of the norms of point-evaluation functionals.
A similar theorem was recently presented by M. Sodin~\cite{Sod}.
We then apply that result to extend classical gap theorems.

Following Beurling and Malliavin, we say that a union of disjoint open intervals $\bigcup (a_n,b_n)$ is long if $$\sum_n \left(\frac{b_n-a_n}{a_n}\right)^2=\infty.$$
The classical Beurling's gap theorem~\cite{Beu} states that if the complement of $\supp\mu$ is long than $\mu$ does not have
any spectral gaps, unless $\mu\equiv 0$. Like most results by Beurling, this statement is sharp in its scale:
the condition of longness of the complement of the support cannot be weakened, as was proved by M. Benedicks~\cite{Ben}.
However, as we show in section \ref{Riesz}, the result can be strengthened in the following way:

\begin{restatable}{corollary}{gap}\label{gap} If $\mu$ is a positive finite measure which is supported on a set whose complement is long then $\mu$ is $a$-determinate for every $a>0$.
\end{restatable}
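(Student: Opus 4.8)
The plan is to reduce the statement to the indeterminacy criterion that is announced for section~\ref{Riesz} (the generalization of M.~Riesz's theorem characterizing indeterminate measures through the norms of point-evaluation functionals), combined with the classical Beurling gap theorem quoted above. First I would argue by contraposition: suppose $\mu$ is a positive finite measure supported on a set $S$ whose complement $\R\setminus S$ is long, and suppose that $\mu$ is \emph{not} $a$-determinate for some $a>0$. Then there is a second positive finite measure $\nu\neq\mu$ with $\hat\mu=\hat\nu$ on $[-a,a]$. Set $\sigma=\mu-\nu$; this is a nonzero finite real measure whose Fourier transform vanishes on $[-a,a]$, i.e. a high-pass signal with spectral gap $[-a,a]$. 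The point is that, although $\nu$ need not be supported on $S$, we want to show that a genuine \emph{indeterminacy} forces a high-pass signal whose support still meets $\R\setminus S$ in a controlled way — and then invoke Beurling.

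The cleaner route, which I would pursue, is to phrase indeterminacy intrinsically on $\supp\mu$. By the M.~Riesz-type theorem (section~\ref{Riesz}), $\mu$ fails to be $a$-determinate precisely when the reproducing-kernel / point-evaluation functionals in the relevant de~Branges space $\PWa$ (the Paley--Wiener space of exponential type $a$, in this trigonometric case), restricted to $L^2(\mu)$, are uniformly bounded in an appropriate sense — equivalently, when there is a nonzero function in $\PWa$, or a nonzero entire function of exponential type $a$, that is ``small'' against $\mu$. Concretely, indeterminacy produces a nonzero $f$ of exponential type at most $a$ with $f\in L^1(\mu)$ and $\int f\,g\,d\mu=0$ for all $g$ of exponential type $0$, forcing $f\,d\mu$ to be (the boundary data of) a high-pass signal supported inside $\supp\mu\subset S$. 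Since $\R\setminus S$ is long, Beurling's gap theorem applies to the finite measure $f\,d\mu$ and yields $f\,d\mu\equiv 0$; because $f$ is a nonzero entire function of finite exponential type it can vanish only on a discrete set, so $\mu$ must be supported on that discrete set. A finite measure on a discrete set is a countable sum of point masses, and such an atomic measure is easily seen to be $a$-determinate for every $a$ (its Fourier transform is almost periodic and a Paley--Wiener / uniqueness argument, or a direct appeal to the Riesz criterion, rules out a competitor) — contradiction. Hence $\mu$ is $a$-determinate for every $a>0$.

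The main obstacle I anticipate is the precise bookkeeping in the middle step: extracting from ``$\mu$ is not $a$-determinate'' the nonzero entire function $f$ of exponential type $a$ with $f\,d\mu$ a \emph{finite} measure carrying a spectral gap, in a form to which Beurling's theorem literally applies. This is exactly the content that the section~\ref{Riesz} machinery is designed to deliver, so in the write-up I would state the needed consequence of the Riesz-type theorem as a clean intermediate claim — roughly, ``$\mu$ is $a$-indeterminate $\iff$ there is a nonzero $f$ of exponential type $\le a$ with $0\neq f\,d\mu\in\MM$, the class of high-pass signals'' — and then the corollary is a two-line deduction: Beurling kills $f\,d\mu$, discreteness of the zero set of $f$ forces $\mu$ atomic with discrete support, and atomic measures with discrete support are determinate. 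A secondary point to handle carefully is that $\nu$ in the definition of indeterminacy is only assumed finite and positive, not supported on $S$; this is precisely why working with $f\,d\mu$ rather than with $\sigma=\mu-\nu$ directly is the right move, since the former stays on $\supp\mu$ by construction.
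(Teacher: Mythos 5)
Your reduction does not go through as written; there are two genuine gaps. The first is the ``clean intermediate claim'' you yourself flag: that $a$-indeterminacy of $\mu$ yields a nonzero entire $f$ of exponential type at most $a$ such that $f\,d\mu$ is a nonzero high-pass signal (hence supported in $\supp\mu$). Nothing in the M.~Riesz-type criterion delivers this. What Theorem~\ref{maj} gives in the indeterminate case is that the $L^2(\mu)$-closure of $\PW_{a/2}$ is a de~Branges space $\BB(\mu)$, and the competing measure it produces is a \emph{spectral measure} $\nu$ of $\BB(\mu)$ --- a discrete measure supported on a level set of the associated inner function, in general neither absolutely continuous with respect to $\mu$ nor supported in $\supp\mu$. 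Turning $\mu-\nu$ into an annihilating measure of the form $f\,d\mu$ with $f$ entire is precisely the hard content you would need to supply, and it is not true in the generality required (an annihilating functional $h\in L^\infty(\mu)$ exists when $\PWa$ is not dense in $L^1(\mu)$, but $h$ need not be entire; the paper only extracts entire-function structure for \emph{extreme} annihilating measures, in Section~\ref{extreme}). The second gap is fatal even granting the first: your closing step ``atomic measures with discrete support are determinate'' is false. The Clark/spectral measures of $\PWa$ are discrete and are the canonical examples of $a$-indeterminate measures (just as the N-extremal solutions of an indeterminate polynomial moment problem are discrete). To rescue that step you would have to show that discrete measures whose support has long complement are determinate --- which is a special case of the corollary you are proving, so the argument becomes circular.

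The paper's proof is entirely different and much shorter: writing $\R\setminus\supp\mu\supset\bigcup(a_n,b_n)$ with $\bigcup(a_n,b_n)$ long, set $w(t)=\mathrm{dist}\bigl(t,\R\setminus\bigcup(a_n,b_n)\bigr)$. Then $w$ is nonnegative and $1$-Lipschitz, $e^{w}\equiv 1$ on $\supp\mu$ so trivially $e^{w}\in L^1(\mu)$, and longness gives $\int w(t)(1+t^2)^{-1}dt=\infty$. Corollary~\ref{uniform} (the strengthened de~Branges gap theorem, itself derived from the majorant criterion of Theorem~\ref{maj} together with Koosis's weighted-approximation estimate) then yields $a$-determinacy for every $a>0$. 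Note that this route never needs to control the competing measure $\nu$ or to localize a high-pass signal inside $\supp\mu$; it only uses that the majorant $m^{\mu}_{a/2}$ is forced to have a divergent logarithmic integral. If you want to keep a Beurling-flavoured argument, the correct move is to apply the weight/majorant machinery directly to $\mu$, not to manufacture a high-pass signal carried by $\supp\mu$.
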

\no   Equivalently, if $\mu$ is a real finite non-zero measure and the complement of $\mu_+$ is long, then $\mu$ has no spectral gaps. Notice that we do not put any conditions on the support of the negative part of the measure.  Other classical gap theorems by de Branges and Levison can be similarly
strengthened, see section \ref{Riesz}.

The last statement implies that if $\mu>0$ is $a$-indeterminate for some $a>0$ then $\mu$ must contain a sequence of positive interior Beurling-Malliavin density in its support. It is natural to expect further relations of this type between the  determinacy of  $\mu$ and density of its support. Our next result makes this observation precise.

\begin{restatable}{theorem}{indeterminate} \label{indeterminate} Let $\mu$ be a positive finite measure. If $\mu$ is $a$-indeterminate then the support of $\mu$ contains an $a/2\pi$-uniform sequence.
\end{restatable}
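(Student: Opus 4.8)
The plan is to derive the statement from Theorem~\ref{oscillation} by replacing $\mu$ with a difference measure. Assume $\mu$ is $a$-indeterminate and fix a positive finite measure $\nu\neq\mu$ with $\hat\nu=\hat\mu$ on $[-a,a]$; put $\sigma=\mu-\nu$. Then $\sigma$ is a nonzero finite real measure with $\hat\sigma\equiv 0$ on $[-a,a]$, i.e. a measure with spectral gap $[-a,a]$. Since $\nu\ge 0$ we have $\sigma\le\mu$ as measures, hence $\sigma^+\le\mu$, so $\sigma^+$ gives no mass to $\R\setminus\supp\mu$ and $\supp\sigma^+\subset\supp\mu$; trivially $\supp\sigma^-\subset\R$. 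Thus $\sigma\in\MM_a(\supp\mu,\R)\setminus\{0\}$, and therefore $G(\supp\mu,\R)\ge a$.

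Now apply Theorem~\ref{oscillation} with $A=\supp\mu$ and $B=\R$ (the requirement $\{\lambda_{2n+1}\}\subset B=\R$ being automatic): since $G(\supp\mu,\R)\ge a$, for every $d<a/\pi$ there is a $d$-uniform sequence $\{\lambda_n\}$ with $\{\lambda_{2n}\}\subset\supp\mu$. The one genuinely new ingredient is that extracting every other term of a $d$-uniform sequence yields a $(d/2)$-uniform one. Indeed, writing the defining property of a $d$-uniform sequence $\Lambda$ in the form ``$\#(\Lambda\cap I)=d\,|I|+O(1)$ for every interval $I$ lying outside a fixed short union of intervals'', the subsequence $\Lambda'=\{\lambda_{2n}\}$ is still separated and satisfies $\#(\Lambda'\cap I)=\tfrac12\#(\Lambda\cap I)+O(1)=\tfrac d2\,|I|+O(1)$ off the same short family, so $\Lambda'$ is $(d/2)$-uniform. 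Hence $\supp\mu$ contains a $(d/2)$-uniform sequence for every $d/2<a/(2\pi)$, and a standard compactness argument over such sequences as $d_k\uparrow a/(2\pi)$ (equivalently, the fact that the relevant supremum in Theorem~\ref{oscillation} is attained when positive, cf.~\cite{PolGap}) upgrades this to an $a/(2\pi)$-uniform sequence contained in $\supp\mu$, which is the assertion.

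The substantive weight of the argument is carried entirely by Theorem~\ref{oscillation}---and behind it by the generalized Riesz theorem and the analysis of extreme indeterminate measures in Sections~\ref{Riesz} and~\ref{extreme}---all of which we are free to assume. What remains is purely the bookkeeping around $d$-uniformity: the ``every other term halves the density'' lemma and the passage from ``$\supp\mu$ contains a $d$-uniform sequence for all $d<a/(2\pi)$'' to ``$\supp\mu$ contains an $a/(2\pi)$-uniform sequence''. This is the only step I expect to require genuine care, and it should follow from the precise definition of a $d$-uniform sequence in \cite{PolGap}.

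For context, the conclusion can also be reached without invoking Theorem~\ref{oscillation}. If $g$ is an entire function of exponential type at most $a/2$ that is square-integrable on $\R$, then $g\overline g$ has Fourier transform supported in $[-a,a]$, so the equality $\hat\mu=\hat\nu$ on $[-a,a]$ forces $\|g\|_{L^2(\mu)}=\|g\|_{L^2(\nu)}$ for every such $g$; feeding this into the Riesz-type characterization of Section~\ref{Riesz} shows that point evaluations on $\mathcal{P}\mathcal{W}_{a/2}$ are bounded in $L^2(\mu)$, i.e. that $\supp\mu$ is large enough to carry the reproducing-kernel structure of $\mathcal{P}\mathcal{W}_{a/2}$, and the sharp gap theorems (Beurling--de Branges--Levinson, in the form of \cite{PolGap}) then force $\supp\mu$ to contain an $a/(2\pi)$-uniform sequence. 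Converting the Riesz-type statement into an honest measure supported on $\supp\mu$ with spectral gap $(-a/2,a/2)$ is the delicate point on this route.
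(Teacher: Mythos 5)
Your reduction is sound and your route is, underneath, the paper's own: Theorem~\ref{oscillation} is proved by taking an extreme point of $\MM_a(A,B)$ and applying the main theorem of \cite{PolGap}, which is exactly what the paper does directly for $\MM_a(\supp\mu,\supp\nu)$; invoking it with $A=\supp\mu$, $B=\R$ is not circular, and the observation $\sigma^+\le\mu$, hence $\supp\sigma^+\subset\supp\mu$, is correct. But the two steps you flag as the only ones needing care are precisely where your justifications do not hold up. For the halving step: the property ``$\#(\Lambda\cap I)=d|I|+O(1)$ off a short family'' is not the definition of a $d$-uniform sequence (the actual definition is regularity with density $d$ along a short partition \emph{plus} the energy condition), and $d$-uniform sequences need not be separated, so ``$\Lambda'$ is still separated'' is unfounded. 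The halving claim itself is true, but for different reasons: $\#(\L'\cap I_n)=\tfrac12\#(\L\cap I_n)+O(1)=\tfrac d2|I_n|+o(|I_n|)$ gives regularity of density $d/2$ over the same short partition, and the energy condition passes to \emph{any} subsequence because each summand $\log_+|I_n|-\log|x-y|$ in the energy sum is nonnegative, so deleting points only decreases it. (This is also the content of the unproved ``It follows that\dots'' step in the paper's proof.)

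The more serious gap is the final limiting step. Passing from ``contains a $d$-uniform sequence for every $d<a/2\pi$'' to ``contains an $a/2\pi$-uniform sequence'' is not a compactness argument; it is the gluing construction of Lemma~\ref{uff}, and that lemma as stated requires all the approximating uniform sequences to be subsequences of a single fixed \emph{discrete} sequence. In your setup the approximants for different $d$ are a priori unrelated sequences living only in the closed (generally non-discrete) set $\supp\mu$, so the lemma does not apply as stated. The paper sidesteps this by fixing one extreme measure $\eta$ of $\MM_a(\supp\mu,\supp\nu)$, whose support is discrete by Lemma~\ref{Krein}, and running the whole argument inside the single discrete sequence $\L'=\{\lambda_{2n}\}\subset\supp\mu$; all the $(\tfrac a{2\pi}-\e)$-uniform approximants are then subsequences of $\L'$ and Lemma~\ref{uff} applies. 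You can repair your version the same way (take one nonzero extreme point of $\MM_a(\supp\mu,\R)$ and halve inside its support), or by verifying that the proof of Lemma~\ref{uff} never uses discreteness of the ambient set, only of the uniform subsequences themselves; as written, though, this step is missing.
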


Extending the last statement, we prove that for a general regular de~Branges space $\BB_E$ a positive measure $\mu$ whose support is sufficiently sparse (in a certain precise sense) is uniquely determined by its action on the space, meaning that there is no other positive measure $\nu$ such that
$$\int F(t)d\mu(t)=\int F(t) d\nu(t), $$ for all $F\in\BB_E$.

Clearly, every $a$-indeterminate measure is a positive part of a signed measure $\sigma$ with a spectral gap $(-a,a)$, i. e.,  such that $\hat{\sigma}(x)=0,$ for $x\in [-a,a]$. Conversely, every non-zero measure $\sigma$ with a spectral gap $(-a,a)$ gives a rise to two $a$-indeterminate measures $\sigma^+$ and $\sigma^-$. Therefore, the determinacy problem is closely related to the gap problem which was recently studied by the second author in~\cite{PolGap}. We prove the following result that gives a precise quantitive relation between these two problems.

For a closed set $X\subset \R$ its gap characteristic is defined by $$G(X)=\sup\{a: \exists\text{ a real non-zero measure } \mu  \text{ with spectral gap } [-a, a]\text{ supported on } X\}.$$ Define the determinacy characteristic of $X$ by
$$Det(X)=\inf\{a : \text{all } \mu \text{ supported on } X \text{ are } a\text{-determinate}\}.$$

\begin{restatable}{theorem}{DetGap}\label{DetGap} For any closed set $X\subset\R$, $$2Det(X)=G(X).$$\end{restatable}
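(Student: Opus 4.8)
The plan is to prove the two inequalities $2\,Det(X)\le G(X)$ and $2\,Det(X)\ge G(X)$ separately, using in both directions the elementary correspondence between indeterminate measures and measures with a spectral gap that was already noted in the text: if $\mu>0$ is $a$-indeterminate, witnessed by $\nu>0$ with $\hat\mu=\hat\nu$ on $[-a,a]$, then $\sigma:=\mu-\nu$ is a nonzero real measure with $\hat\sigma\equiv 0$ on $[-a,a]$, i.e.\ $\sigma$ has spectral gap $[-a,a]$; conversely $\sigma^\pm$ are $a$-indeterminate whenever $\sigma\neq 0$ has spectral gap $[-a,a]$. The subtlety that must be tracked throughout is the bookkeeping of supports, since $Det$ is defined via measures supported on $X$ while $G$ involves a single measure supported on $X$, and the two pieces $\sigma^+$ and $\sigma^-$ of a gap measure need not both live on $X$.

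First I would show $G(X)\le 2\,Det(X)$. Suppose $a<G(X)$; then there is a nonzero real measure $\sigma$ with spectral gap $[-a,a]$ supported on $X$. Without loss of generality $\sigma^+\neq 0$, and $\sigma^+$ is supported on $X$; moreover $\sigma^+$ is $a$-indeterminate (its competitor being $\sigma^-$, since $\widehat{\sigma^+}=\widehat{\sigma^-}$ on $[-a,a]$). Hence there exists a measure supported on $X$ that is not $a$-determinate, so by definition $Det(X)\ge a$. Wait — here the definition of $Det$ as an infimum over $a$ such that \emph{all} $\mu$ on $X$ are $a$-determinate forces $Det(X)\ge a$, and we are off by a factor of $2$; the point is that the natural scaling in Theorem~\ref{oscillation} and in Krein's work pairs a gap $[-a,a]$ with indeterminacy at level $2a$ (one should recheck the normalization of $\hat\mu$: the two halves of the gap, of total length $2a$, are what controls a determinacy interval of length $4a$ versus $2a$). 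I would pin down the constant by the following sharper observation: if $\sigma$ has spectral gap $[-a,a]$ and $\sigma^+\perp\sigma^-$ are both nonzero, one can ``split'' the gap — for any decomposition $a = a_1+a_2$ with $a_i\ge 0$ one still has $\widehat{\sigma^+}$ and $\widehat{\sigma^-}$ agreeing on $[-a,a]\supset[-2\min(a_1,a_2),\,2\min(a_1,a_2)]$, which is not yet a factor of $2$ either. The honest route is: $\sigma^+$ being $2a$-indeterminate would require $\widehat{\sigma^+}=\widehat{\sigma^-}$ on $[-2a,2a]$, which does \emph{not} follow; rather $\sigma^+$ is only $a$-indeterminate, while the correct pairing comes from the observation that a \emph{symmetric} gap of length $2a$ corresponds to determinacy failing at level $a$, and it is the \emph{one-sided} length $a$ of the gap $[-a,a]$ that I should compare to $Det$ after the factor $2$ is absorbed into the definition of $G$ using the interval $[-a,a]$ of total length $2a$. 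I will therefore state the clean half as: $a<G(X)\Rightarrow$ some $\mu$ on $X$ is $(a/1)$-indeterminate and then invoke a multiplier trick (multiply $\sigma$ by $e^{iat}$, or convolve with a short approximate identity) to upgrade a spectral gap $[-a,a]$ of a measure on $X$ to a spectral gap $[-a',a']$ with $a'$ arbitrarily close to $a$ while enlarging the support only negligibly, thereby matching constants. The key step in this direction is thus an approximation/multiplier argument; I expect it to be routine once the Riesz-type characterization of section~\ref{Riesz} (point-evaluation norms) is invoked to control what happens to the support.

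For the reverse inequality $2\,Det(X)\ge G(X)$, i.e.\ $G(X)\le 2\,Det(X)$ in the other direction — equivalently, if $a>G(X)$ then every $\mu$ supported on $X$ is $a/2$-determinate — I would argue contrapositively: suppose some positive finite $\mu$ supported on $X$ is $b$-indeterminate; produce from it a nonzero real measure supported on $X$ with spectral gap of total length $2b$, forcing $G(X)\ge 2b$, hence $Det(X)\ge b$ gives $G(X)\ge 2\,Det(X)$. The difficulty is precisely that the competitor $\nu$ with $\hat\mu=\hat\nu$ on $[-b,b]$ need not be supported on $X$, so $\sigma=\mu-\nu$ is a gap measure but not obviously supported on $X$. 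This is where I expect the main obstacle to lie, and the tool to overcome it is the structural description of extreme indeterminate measures developed in section~\ref{extreme}: among all competitors $\nu$, the extreme ones are supported on discrete sequences (the ``$d$-uniform'' sequences of section~\ref{prelim}), and more importantly, by M. Riesz's classical argument adapted here, $\mu$ being $b$-indeterminate means the evaluation functionals on the relevant de Branges space $\BB_E$ (with $E$ tuned so that $\BB_E$ is a Paley--Wiener-type space $\PWa$ with $a\sim b$) are uniformly bounded in $L^2(\mu)$, and then one can choose the competitor to have support \emph{inside} $\supp\mu\subset X$ by a weak-$*$ compactness / extreme point selection — specifically, there is an indeterminate $\mu$-a.c.-singular decomposition whose ``canonical'' competitor lives on a $(b/2\pi)$-uniform subsequence of $\supp\mu$, exactly as in Theorem~\ref{indeterminate}. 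Concretely I would: (1) use Theorem~\ref{indeterminate} to extract from $\supp\mu$ a $(b/2\pi)$-uniform sequence $\{\lambda_n\}\subset X$; (2) invoke the gap-problem machinery of \cite{PolGap} (or Theorem~\ref{oscillation} applied with $A=B=X$, taking every other point of $\{\lambda_n\}$) to build a nonzero real measure $\sigma$ supported on $\{\lambda_n\}\subset X$ with spectral gap of half-length $b$, that is $[-b,b]$; (3) conclude $G(X)\ge b=Det(X)$ up to the factor of two that is forced by the conventions — and here the factor $2$ finally appears cleanly because Theorem~\ref{oscillation} pairs a $d$-uniform sequence split into two interlaced halves with a gap of total length $2\pi d$, while $b$-indeterminacy only uses the sequence once, so the honest correspondence is $b \leftrightarrow \pi d$ on the determinacy side and $2b=G \leftrightarrow 2\pi d$ on the gap side.

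In summary, both inequalities reduce to the $\mu\rightsquigarrow\sigma=\mu-\nu$ (and back) dictionary, the real content being (a) a multiplier/approximation argument to keep supports under control and nail the constant $2$, and (b) the extreme-measure analysis of section~\ref{extreme} together with Theorem~\ref{indeterminate}, which guarantee that when $\mu$ on $X$ is indeterminate one may choose the \emph{witnessing} gap measure to also be supported on $X$ (on a uniform subsequence of $\supp\mu$). The main obstacle is (b): without the support-localization of the competitor, one only gets inequalities involving supports larger than $X$, and it is the interlacing structure of $d$-uniform sequences — each such sequence naturally splitting into two subsequences that can play the roles of $\supp\sigma^+$ and $\supp\sigma^-$ both inside $X$ — that closes the gap and simultaneously explains the factor of $2$ relating $Det$ and $G$.
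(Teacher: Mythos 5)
Your proposal has the right architecture---both inequalities do reduce to the dictionary $\mu\leftrightarrow\sigma=\mu-\nu$, with Theorem~\ref{indeterminate} plus the gap theorem of \cite{PolGap} in one direction and Theorem~\ref{oscillation} in the other, which is exactly the paper's route---but as written it is not a proof: you never pin down either constant, and you say so yourself several times. The two concrete devices that actually produce the constants are missing. For the upper bound on $Det(X)$ your chain is right in outline but wrong in output: a $b$-indeterminate $\mu$ on $X$ gives, by Theorem~\ref{indeterminate}, a $(b/2\pi)$-uniform sequence in $X$, and the gap theorem then yields a nonzero measure on $X$ with spectral gap $[-c,c]$ for every $c<\pi\cdot(b/2\pi)=b/2$---not $[-b,b]$ as you claim---so the conclusion is $G(X)\ge b/2$, i.e. $Det(X)\le 2G(X)$. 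For the lower bound the paper's device, which you never find, is to \emph{adjoin midpoints}: take the $(a-\e)/\pi$-uniform sequence $\G=\{\gamma_n\}\subset X$ given by the gap theorem (with $a=G(X)$), set $\G'=\{(\gamma_n+\gamma_{n+1})/2\}$, observe that $\G\cup\G'$ is $2(a-\e)/\pi$-uniform and alternates between $\G\subset X$ and $\G'$, and apply Theorem~\ref{oscillation} with $A=\G$, $B=\G'$ to obtain a measure with spectral gap close to $[-2a,2a]$ whose \emph{positive part is supported on} $\G\subset X$; that positive part is then indeterminate at level close to $2a$. Your proposed substitute---a ``multiplier/approximation argument'' (multiplying by $e^{iat}$ or convolving with an approximate identity)---cannot do this job: shifting or smoothing the gap does not double anything; the factor of $2$ comes entirely from doubling the density of the supporting sequence while keeping one of its two interlaced halves inside $X$. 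Splitting a sequence into every other point, which is the version of interlacing you do mention, goes in the opposite direction and \emph{loses} a factor of $2$.

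Second, you should have trusted your first observation instead of abandoning it. The remark that for $a<G(X)$ the measure $\sigma^+$ is $a$-indeterminate and supported on $X$ is correct and immediately gives $Det(X)\ge G(X)$; combined with the upper bound $Det(X)\le 2G(X)$ above, it shows that the constant cannot sit where the printed statement puts it, since $2Det(X)=G(X)$ together with $Det(X)\ge G(X)$ would force $G(X)=0$ for every $X$. With the normalizations used consistently (gap $[-a,a]$ both in the definition of $G$ and of $a$-determinacy, and $G(X)=\pi\sup\{d:\ X\ \text{contains a}\ d\text{-uniform sequence}\}$ as in Theorem~\ref{oscillation}), the identity the two arguments above establish is $Det(X)=2G(X)$, and that is what the paper's proof yields once its own notational slips are corrected. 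So the ``off by a factor of $2$'' you kept running into was the signal that the factor belongs on the other side; the two halves you actually needed were $Det(X)\le 2G(X)$ via Theorem~\ref{indeterminate} and $Det(X)\ge 2G(X)$ via midpoints and Theorem~\ref{oscillation}, neither of which your proposal completes.
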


\noindent\textbf{Acknowledgement.} We are grateful to A. Eremenko and M. Sodin who brought the problem on oscillations of Fourier integrals to our attention.

\section{Preliminaries}\label{prelim}

We  denote by $\Pi$ the Poisson measure on $\R$, $d\Pi(x)=dx/(1+x^2)$.
The notation $\CC_a$ stands for the Cartwright class, the class of entire functions $F$ of exponential type no greater than $a$  such that $\log|F|\in L^1(\Pi)$. We will also use the standard notation $\PWa$ for the Paley-Wiener class, and $\BB_a$ for the Bernstein class consisting of entire functions of type at most $a$ which are bounded on $\R$. Finally, we will denote by $\KK_a$ the Krein class  consisting of entire functions $F(z)$ of exponential type no greater than $a$ which have only real simple zeros $\{\lambda_n\}$, such that $\sum_n1/|F'(\lambda_n)|<\infty,$ and satisfy $$\frac{1}{F(z)}=\sum_n\frac{1}{(z-\lambda_n)F'(\lambda_n)}.$$  By a well known theorem of M. Krein each function from $\KK_a$ belongs in $\CC_a$

If $E$ is an entire function we denote $E^\#(z)=\bar E(\bar z)$. An entire function $E$ belongs to the Hermite-Biehler class if
it satisfies
$$|E(z)|>|E^\#(z)|$$
for all $z\in \C_+=\{\Im z>0\}$. Hermite-Biehler functions are often called de Branges functions in the literature.
For any Hermite-Biehler (de Branges) function $E$, the de Branges space $\BB_E$ is defined as a space of all
entire functions $F$ such that both $F/E$ and $F^\#/E$ belong to the Hardy space $H^2(\C_+)$.
For more on de Branges spaces see \cite{dBr}.

A de~Branges space $\BB_E$ is called regular if
\begin{itemize}
\item[(i)] $\BB_E\subset\CC_a$ and
\item[(ii)] If $F\in \EE$ , then $(F(z)-F(w))/(z-w)\in \EE$ for any $w\in\C$.
\end{itemize}

The main example of a regular de Branges space is the Payley-Wiener space $\PWa$. In that case $E$ is the standard
exponential function $S^a=e^{iaz}$.

If $\Th$ is an inner function in the upper half-plane, we denote by $\KK_\Th$ the model space $H^2(\C_+)\ominus\Th H^2(\C_+)$.
Such spaces play a central role in the Nagy-Foias functional model theory, see \cite{Nik}.

Every de~Branges function $E(z)$ gives rise to an inner function $\Th(z):=E^\#(z)/E(z)$ and a model space $\KK_{\Th}$ that this inner function generates. There exists a well known isometric isomorphism between $\BB_E$ and $\KK_{\Th}$ given by $F\to F/E$.

Each inner function $\Th(z)$ determines a family of positive measures $\mu_{\alpha}$ on $\R$ indexed by $\abs{\alpha}=1$ in the following way
\begin{equation}\label{Herglotz}
\Re{\frac{\alpha+\Th(z)}{\alpha-\Th(z)}}=p_{\alpha}\Im{z}+\frac{1}{\Im{z}}\int\frac{d\mu_{\alpha}(t)}{\abs{t-z}^2},
\end{equation}
for some $p_{\alpha}\geq 0$. The number $p_{\alpha}$ can be viewed as a point mass at infinity for $\mu_{\alpha}$. Each measure $\mu_{\alpha}$ is singular, supported on the set $\{\Th=\alpha\}$, and is Poisson-finite. They are usually called the Clark measures for $\Th(z)$. Throughout the paper when we say that $\Th(z)$ corresponds to a measure $\mu$ on $\R$ we will always mean that $\mu$ is the Clark measure $\mu_1$ for $\Th$ and that $p_1=0$.
In this case, each function $f\in\KK_{\Th}$ can be represented by the Clark formula
$$f(z)=\frac{1-\Th(z)}{2\pi i}K(f\mu)(z),$$ where $K(f\mu)$ stands for the Cauchy integral $$K(f\mu)(z)=\int \frac{f(t)}{t-z}d\mu(t).$$ This formula gives an isometric isomorphism between $\KK_{\Th}$ and $L^2(\mu)$. For more on Clark measures see for instance \cite{PolSar}.

For those inner functions $\Th$ that can be represented as  $\Th(z)=E^\#(z)/E(z)$ for some de Branges function $E$, all  Clark measures $\mu_{\alpha}$ are discrete and their point masses can be computed by $\mu_{\alpha}(\lambda)=2\pi/\abs{\Th'(\lambda)}$ for $\lambda\in\{\Th=\alpha\}$.

We will call the measures $|E|^2\mu_{\alpha}$, where $\mu_\alpha$ is a Clark measure for $\Th(z)=E^\#(z)/E(z)$,  spectral measures of the corresponding de~Branges space. It is well known that for any spectral measure $\nu$ of a de Branges space $\BB_E$ the natural embedding gives
and isometric isomorphism between $\BB_E$ and $L^2(\nu)$.

On the real line each inner $\Th(z)$ coming from a de~Branges function can be written as $\Th(t)=e^{i\theta(t)},\ t\in\R$, where $\theta(t)$ is real analytic strictly increasing function. The function $\theta(t)$ is a continuous branch of the argument of $\Th(z)$ on $\R$. The phase function of the corresponding de~Branges space is defined by $\phi(t)=\theta(t)/2$ and is equal to $-\arg E$.

Next we give the definitions of the densities that will be used in our statements. Recall that the family of disjoint intervals $\{I_n\}$ on the real line (or the union of the intervals from that family, when it is convenient) is called long if $$\sum_n \left(\frac{|I_n|}{1+\text{dist}(0, I_n)}\right)^2=\infty.$$ Otherwise, we will say that the family is short. In the case when $\{I_n\}$ is short, $\bigcup I_n=\R$, and $\abs{I_n}\to \infty$ as $n\to \pm\infty$ we will call the family $\{I_n\}$ a short partition. 

In all our definitions and statements the intervals of a short partition, as well as discrete sequences of points, are assumed to be enumerated in
a natural increasing order.

\begin{definition} A sequence $\Lambda=\{\lambda_n\}$  with no finite accumulation point is said to be regular with density $a>0$ if there exists a short partition $\{I_n\}$ such that $$\#(\L\cap I_n)-a|I_n|=o(|I_n|) \text{ as } |n|\to \infty.$$
\end{definition}

\begin{remark} It can be shown that $\L$ is regular with density $a$ if and only if the following integral condition holds
$$\int \frac{\abs{n_\L(x)-ax}}{1+x^2}<\infty.$$ This definition of regularity was used in~\cite{Koo2} in the definition of Beurling-Malliavin densities.

\end{remark}


\begin{definition} The interior Beurling-Malliavin density $D_{BM}^-(X)$ of a closed set $X\subset \R$ is defined to be the supremum of all $a>0$ for which there exists a regular subsequence $\Lambda\subset X$ with density $a$. If no such subsequence exists $D_{BM}^-(X)=0$.

Similarly, for real sequences $\L$ one defines (the more famous) exterior density $D_{BM}^+(\L)$ as the infimum of $a$ such that
$\L$ is contained in a regular sequence with density $a$.
\end{definition}

The famous result of Beurling and Malliavin~\cite{BM} says that the zero set $\L$ of a function $F(z)\in\CC_{a\pi}$ must satisfy $D^+(\L)\leq a$. Conversely, for every sequence $\L$ with $D^+(\L)=a$ and every $k>a$, there exists non-zero $F(z)\in\CC_{k\pi}$ which vanishes at that sequence. However, $F(z)$ may have additional zeros.

In~\cite{PolGap} it was proved that a very similar statement holds for the zeros of functions in the Krein class. It turns out that in this case the regularity notion above needs to be refined to take into account a certain delicate separation condition. Namely, for any finite interval $I\subset \R$ and a Borel measure $\mu$ we define the energy of $\mu$, $E_I(\mu)$ by $$E_I(\mu)=\iint_{I, I} \log|x-y|d\mu(x)d\mu(y).$$ This is just the usual energy of the compactly supported measure $1_I(x)d\mu(x)$.

\begin{definition} We say that a real sequence $\Lambda=\{\lambda_n\}$  with no finite accumulation points is $a$-uniform if it is regular with density $a$ and satisfies the following energy condition: there exists a short partition $\{I_n\}$ such that

 $$ \sum_n\frac{\#(\L\cap I_n)^2\log_+|I_n|-E_{I_n}(dn_{\Lambda})}{1+\text{dist}(0,I_n)^2}<\infty.$$
\end{definition}

\begin{remark}

Notice that the series in the energy condition is positive.
The definition
given here is slightly different from the one given in \cite{PolGap, PolType}. Either one can be used in all of the results
and formulas from \cite{PolGap, PolType} or the present paper.
Note also that every separated regular sequence of density $a$ satisfies the energy condition automatically and therefore is $a$-uniform.

\end{remark}

The following technical lemma will be used several times in what follows.

\begin{lemma}\label{uff} If a discrete sequence $\L$ contains a $(d-\e)$-uniform subsequence for
every $\e>0$, then it also contains
a $d$-uniform subsequence. If $\L$ alternates between two sets $A$ and $B$, then this $d$-uniform
subsequence can be chosen to also
alternate between $A$ and $B$.
\end{lemma}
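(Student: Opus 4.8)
The plan is to build the $d$-uniform subsequence by a diagonal/exhaustion argument, selecting from the given $(d-\e_k)$-uniform subsequences $\L_k\subset\L$ (with $\e_k\downarrow 0$) longer and longer finite pieces, and then verifying that the resulting sequence $\L_\infty$ inherits both the regularity-with-density-$d$ property and the energy condition. First I would fix a reference short partition: since each $\L_k$ is regular with density $d-\e_k$, it has an associated short partition $\{I^{(k)}_n\}$; using Lemma-type bookkeeping one can pass to a common short partition $\{I_n\}$ that works simultaneously (a short partition can always be coarsened, and coarsening preserves both the counting asymptotics and the energy series up to controlled error, because $\log_+|I|$ and $E_I(dn_\L)$ are almost additive under subdivision — this is exactly the kind of routine estimate I would not grind through here). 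On the far-out blocks $I_n$ with $|n|$ large, the counting function of $\L_k$ satisfies $\#(\L_k\cap I_n)\ge (d-\e_k)|I_n|-o(|I_n|)$, so by choosing $k=k(n)\to\infty$ slowly enough along $|n|\to\infty$ and taking $\L_\infty\cap I_n:=\L_{k(n)}\cap I_n$, we get $\#(\L_\infty\cap I_n)\ge (d-\e_{k(n)})|I_n|-o(|I_n|)=d|I_n|-o(|I_n|)$; an upper bound of the same form is free since $\L_\infty\subset\L$ and we may in addition thin $\L_\infty$ inside each block so that $\#(\L_\infty\cap I_n)=\lfloor d|I_n|\rfloor$ exactly. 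That gives regularity with density $d$.

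The second, and I expect harder, step is the energy condition for $\L_\infty$. The key point is locality: the energy $E_{I_n}(dn_{\L_\infty})$ depends only on $\L_\infty\cap I_n=\L_{k(n)}\cap I_n$, hence the numerator $\#(\L_\infty\cap I_n)^2\log_+|I_n|-E_{I_n}(dn_{\L_\infty})$ for the block $I_n$ coincides (up to the discrepancy caused by the final thinning to exactly $\lfloor d|I_n|\rfloor$ points, which changes the count by $O(\e_{k(n)}|I_n|)$ and the energy by a comparable amount) with the corresponding numerator for $\L_{k(n)}$ relative to the partition $\{I_n\}$. Since each $\L_k$ is $(d-\e_k)$-uniform, its energy series $\sum_n (\#(\L_k\cap I_n)^2\log_+|I_n|-E_{I_n}(dn_{\L_k}))/(1+\mathrm{dist}(0,I_n)^2)$ converges; the tails of these series can be made uniformly small, so if $k(n)\to\infty$ slowly enough the ``mixed'' series $\sum_n (\cdots \text{ for } \L_{k(n)} \text{ on } I_n)/(1+\mathrm{dist}(0,I_n)^2)$ also converges (it is dominated blockwise by the tail of the $\L_{k(n)}$-series, which is summable by a standard slow-growth-of-$k(n)$ choice, e.g. pick $k(n)$ so that the $|n'|\ge|n|$ tail of the $\L_{k(n)}$-energy-series is at most $2^{-|n|}$). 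The nonnegativity of each term (noted in the Remark) is what makes this blockwise domination legitimate. Adding back the $O(\e_{k(n)}^2|I_n|^2\log_+|I_n|)$-type corrections from the thinning and from passing to the common partition, and checking they are summable against $(1+\mathrm{dist}(0,I_n)^2)^{-1}$ (they are, after shrinking $\e_k$ further up front if necessary), completes the verification that $\L_\infty$ is $d$-uniform.

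For the alternating refinement, one runs the same construction but with the extra constraint that inside each block $I_n$ the chosen points of $\L_{k(n)}$ are taken so that consecutive points lie alternately in $A$ and $B$, which is possible because $\L$ — and hence each $\L_k$, being a subsequence that we may assume still alternates — already alternates between $A$ and $B$, so within a block we can simply keep, say, every other point while respecting the alternation at the block boundaries; thinning to $\lfloor d|I_n|\rfloor$ points then changes the count by at most $1$ from the alternation bookkeeping, well within the $o(|I_n|)$ error. The main obstacle, to summarize, is managing the interaction between the three sources of error — (a) coarsening to a common short partition, (b) choosing $k(n)\to\infty$ and extracting blocks, (c) the final thinning to an exact integer count (and to alternation) — and showing all three contribute summable corrections to the energy series; once one commits to choosing $k(n)$ growing slowly enough that the tails of the individual energy series dominate everything blockwise, this is bookkeeping rather than a genuine difficulty.
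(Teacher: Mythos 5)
Your overall architecture --- glue together blocks of higher and higher density sequences $\L_k$ as $|n|\to\infty$, with a slow-growth choice of $k(n)$ so that tails of the individual energy series dominate the mixed series --- is the same diagonal strategy the paper uses. But there is a genuine gap at the step you dismiss as ``routine bookkeeping'': the passage to a common short partition. The energy numerator $\#(\L\cap I)^2\log_+|I|-E_I(dn_\L)$ is \emph{superadditive}, not almost additive, under coarsening: for $K=I_1\cup I_2$ one has $E_K=E_{I_1}+E_{I_2}+2\iint_{I_1\times I_2}\log|x-y|$ with $\iint_{I_1\times I_2}\log|x-y|\le \#_1\#_2\log|K|$, and $\log_+|K|\ge\log_+|I_i|$, so the numerator for $K$ is at least the sum of the numerators for $I_1,I_2$ plus a nonnegative cross term that can be large (points of $\L$ clustering near the common endpoint, or $|K|\gg|I_i|$). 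Hence convergence of the energy sum can be destroyed by coarsening, and it cannot be rescued by refining instead, since a common refinement of infinitely many short partitions need not satisfy $|I_n|\to\infty$. This is precisely why the paper invokes Lemma~4 of \cite{PolGap}: to move a uniform sequence to a coarser superpartition one must in general pass to a further \emph{subsequence}, not merely re-estimate. The paper also never needs one partition for all $\L_k$ at once; it reconciles only two sequences per step (keeping the previous sequence on a large central region and splicing in the denser one only on far tails where its energy contribution is below $\delta=2^{-k}$), obtaining the telescoping bound ``energy of $\L_{k+1}\le$ energy of $\L_1+\sum 2^{-j}$'' before diagonalizing. Without Lemma~4 of \cite{PolGap} or an equivalent device, your construction does not get off the ground.

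Two smaller points. Your ``thin to exactly $\lfloor d|I_n|\rfloor$ points'' goes the wrong way: the pieces $\L_{k(n)}\cap I_n$ have roughly $(d-\e_{k(n)})|I_n|$ points, i.e.\ \emph{fewer} than $d|I_n|$, so there is nothing to thin; the count $d|I_n|+o(|I_n|)$ comes for free from $\e_{k(n)}\to 0$ (with the usual uniformity caveat handled by slow growth of $k(n)$). And for the alternation you assume ``each $\L_k$, being a subsequence, still alternates'' --- this is not given: the hypothesis provides \emph{some} $(d-\e)$-uniform subsequence of $\L$, and an arbitrary subsequence of an alternating sequence need not alternate, so the alternation must be enforced during the selection at each gluing step (as the paper does), not assumed of the input.
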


\begin{proof} Fix a small $\e>0$. Let $\L_1\subset\L$ be a $(d-\e)$-uniform sequence and let $\{I^{(1)}_n\}$ be the corresponding
short partition. Let $\L'_1\subset\L$ be a $(d-\e/2)$-uniform
sequence with corresponding short partition $\{J_n\}$.

We form a third short partition $\{K_n\}$ with the following properties:

\begin{itemize}

\item[(i)] $\{K_n\}$ is a superpartition of $\{J_n\}$, i.e. each interval $J_n$ is a subset of some interval $K_m$,

\item[(ii)] the intervals of $\{K_n\}$ are big enough, so that If two intervals $I^{(1)}_{n_k}$ and $I^{(1)}_{m_k}$
contain the endpoints of some interval $K_k$
then $|I^{(1)}_{n_k}|+|I^{(1)}_{m_k}|= o( |K_k|)$.
\end{itemize}

By lemma~4 in~\cite{PolGap} one can choose a subsequence $\Sigma$ of $\L'_1$
which is also $(d-\e/2)$-uniform with
a corresponding short partition $\{K_n\}$.

Let $\delta>0$. Choose $N$ big enough so that  the energy
sum for $\Sigma$ over $\{K_n\}_{|n|>N}$ is less than $\delta$, and so
that $I_{n_k}$ and $I_{m_k}$ for $k=\pm N$ are small in comparison
to $K_{\pm N}$.

We form a new sequence $\L_2$ in the following way. We keep $\L_1$ on all the
intervals $I^{(1)}_n$ that lie strictly between $K_{-N}$ and $K_N$.
On the rest of the line, keep $\Sigma$ on each $K_n$ with $|n|\geq N$.

We form a new short partition $\{I_n^{(2)}\}$  for $\L_2$ that consists of all intervals $I^{(1)}_n$ that lie between $K_{-N}$
and $K_N$, all $\{K_n\}_{|n|> N}$, instead of $K_N$ we include the interval $K_N\cup I^{(1)}_{n_N}$ and instead of
$K_{-N}$ we include $K_{-N}\cup I^{(1)}_{m_{-N}}$.

Then $\L_2$ is a $(d-\e/2)$-uniform sequence with a corresponding short partition $\{I_n^{(2)}\}$. The
corresponding energy sum  for $\L_2$ is
at most the energy sum for $\L_1$ plus $\delta$.

Reiterating this process we obtain sequences $\{\L_k\}$ that are $(d-\e/2^k)$-uniform with corresponding short partitions $\{I_n^{(k)}\}$ with energy sums at most the energy of $\L_1$ plus $1/2+\cdots +1/2^k$ (we choose $\delta = 2^{-i}$ in the $i$-th step). Using a diagonal process we
obtain a regular sequence of density $d$ whose energy sum converges, which is the desired sequence.

Finally, if $\L$ alternates between $A$ and $B$, in each step we can choose $\L_k$ to also alternate. The diagonal process will then also produce a sequence that alternates.

\end{proof}

\section{M.~Riesz-type criterion and its consequences}\label{Riesz}

For a given set of functions $\AA$ we will say that a positive measure $\mu$ is $\AA$-determinate if $\AA\subset L^1(\mu)$ and there is no other positive measure $\nu\neq \mu$, $\AA\subset L^1(\nu)$, such that
$$\int f(t)d\mu(t)=\int f(t) d\nu(t),$$ for all $f\in\AA$. Otherwise, we say that $\mu$ is $\AA$-indeterminate. We will study determinacy on linear spaces of entire functions $\EE$ satisfying the following properties:

\begin{itemize}
\item[(E1)] If $F(z)\in \EE$, then $F^\#(z):=\overline{F(\bar{z})}\in \EE$,
\item[(E2)] If $F(z)\in \EE$ and $w\in\C$, then $(F(z)-F(w))/(z-w)\in \EE$.
\item[(E3)] There exists $a\geq 0$ such that $\EE\subset \CC_a$.
\end{itemize}

There are many examples of sets $\EE$ that satisfy (E1)-(E3). The set of all polynomials $\PP$ is one such example. Other examples that we will use include the Bernstein class $\BB_a$, the Paley-Wiener space $\PP\WW_a$, and other regular de~Branges spaces (those de~Branges spaces which satisfy (E2) and (E3)).

It is not hard to see that a positive finite measure $\mu$ is $a$-determinate if and only if it is $\BB_a$-determinate or equivalently $\PP\WW_a$-determinate.

The next result gives a criterion for determinacy which generalizes the classical M. Riesz criterion for determinacy in the moment problem.
Our proof will also rely on the M. Riesz original idea. However, we will use  the de~Branges spaces as a substitute for the Gauss quadrature formula. We will use the following notation

$$\EE *\EE=\{F : F= GH, \text{ for some } G, H\in \EE\}.$$

\begin{theorem}\label{maj} Let $\EE$ be a set which satisfies (E1)-(E3) and let $\mu$ be a positive measure such that $\EE\subset L^1(\mu)\cap L^2(\mu)$. The following are equivalent:

\begin{itemize}
\item[(a)] $\mu$ is $\EE * \EE$-indeterminate
\item[(b)] The $L^2(\mu)$-closure of $\EE$ is a de~Branges space.
\item[(c)] The majorant  $m(x):=\sup\{\abs{F(x)} : F\in \EE, \norm{F}_{L^2(\mu)}\leq 1\}$ satisfies
$$\int \frac{\log{m(x)}}{1+x^2} dx < \infty,$$
\end{itemize}

\end{theorem}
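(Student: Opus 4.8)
The plan is to use condition (b) as a hub, proving (a)$\Leftrightarrow$(b) and (b)$\Leftrightarrow$(c). Write $\HH$ for the $L^2(\mu)$-closure of $\EE$. First note that since $F^\#(t)=\overline{F(t)}$ on $\R$, (E1) gives $FF^\#\in\EE*\EE$, so for any measure $\rho$ representing $\mu$ on $\EE*\EE$ one has $\norm{F}_{L^2(\rho)}^2=\int FF^\#\,d\rho=\norm{F}_{L^2(\mu)}^2$, and by polarization $\ip FG_{L^2(\rho)}=\ip FG_{L^2(\mu)}$ for $F,G\in\EE$; I will use this repeatedly to transfer inner products between representing measures.

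The main step, and the expected obstacle, is \textbf{(a)$\Rightarrow$(b)} via an M.~Riesz-type identity. Let $\nu\neq\mu$ be positive with $\int GH\,d\mu=\int GH\,d\nu$ for all $G,H\in\EE$, and put $\sigma=\mu-\nu$, so $\int GH\,d\sigma=0$. For $F\in\EE$ and $z\notin\R$ set $\psi_F(z)=\int\frac{F(t)}{t-z}\,d\sigma(t)$ (finite by Cauchy--Schwarz and $\EE\subset L^2(\mu)\cap L^2(\nu)$). By (E2), $t\mapsto H(t)\frac{F(t)-F(z)}{t-z}$ lies in $\EE*\EE$, so integrating $\frac{F(t)H(t)}{t-z}=H(t)\frac{F(t)-F(z)}{t-z}+F(z)\frac{H(t)}{t-z}$ against $\sigma$ yields $\int\frac{FH}{t-z}\,d\sigma=F(z)\psi_H(z)$, and by symmetry $F(z)\psi_H(z)=H(z)\psi_F(z)$. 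Hence $\psi_F(z)/F(z)$ is independent of $F$; calling it $-g(z)$ we get $\psi_F(z)=-g(z)F(z)$ for all $F\in\EE$, where $g$ is meromorphic off $\R$ and $g\not\equiv0$ (otherwise $F\sigma\equiv0$ for all $F\in\EE$, forcing $\sigma=0$). Since $\norm F_{L^2(\mu)}=\norm F_{L^2(\nu)}$, Cauchy--Schwarz gives $\abs{g(z)}\,\abs{F(z)}=\abs{\psi_F(z)}\le C(z)$ for $\norm F_{L^2(\mu)}\le1$, with $C(z)^2\lesssim\int\frac{d(\mu+\nu)(t)}{\abs{t-z}^2}<\infty$ for $z\notin\R$. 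Therefore the majorant obeys $m(z)\le C(z)/\abs{g(z)}<\infty$ for every $z\notin\R$, so point evaluations on $\EE$ are bounded there, Cauchy sequences in $\HH$ converge locally uniformly to entire functions, and $\HH$ is a reproducing-kernel Hilbert space of entire functions. It remains to check the de~Branges axioms: $F\mapsto F^\#$ is isometric on $\HH$ (by (E1) and $\mu$ real); and if $F\in\EE$, $F(w)=0$, then by (E2) $\frac{z-\bar w}{z-w}F(z)=F(z)+(w-\bar w)\frac{F(z)}{z-w}\in\EE$ with the same $L^2(\mu)$ norm since $\bigl|\frac{t-\bar w}{t-w}\bigr|=1$ on $\R$, and this extends to $\HH$ by density. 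By de~Branges' structure theorem \cite{dBr}, $\HH=\BB_E$ for some Hermite--Biehler $E$, and $\EE\subset\CC_a$ (by (E3)) makes $\BB_E$ regular; this is (b).

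For \textbf{(b)$\Rightarrow$(a)}: if $\HH=\BB_E$, the isometric embedding $\BB_E\hookrightarrow L^2(\mu)$ gives, by the polarization remark, $\int GH\,d\mu=\ip{G}{H^\#}_{\BB_E}$ for $G,H\in\EE$, a quantity depending only on $\BB_E$. The same holds for any spectral measure $\nu_E=\abs E^2\mu_\alpha$ of $\BB_E$ (for which $\BB_E\hookrightarrow L^2(\nu_E)$ is isometric), so $\int GH\,d\mu=\int GH\,d\nu_E$ for all $G,H\in\EE$, and $\EE*\EE\subset L^1(\nu_E)$ by Cauchy--Schwarz since $\EE\subset L^2(\nu_E)$. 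As the spectral measures form a genuine one-parameter family, at least one is distinct from $\mu$ and witnesses (a).

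Finally \textbf{(b)$\Leftrightarrow$(c)}. If $\HH=\BB_E$, then $m(x)^2$ equals the norm-squared of the reproducing kernel, $m(x)^2=K_E(x,x)=\pi^{-1}\phi'(x)\abs{E(x)}^2$, with $\phi$ the phase function; for a regular $\BB_E$ one has $\log^+\abs E\in L^1(\Pi)$ and $\phi(x)=O(x)$, and a Jensen estimate on unit intervals gives $\int\frac{\log^+\phi'(x)}{1+x^2}\,dx<\infty$, whence $\int\frac{\log^+ m(x)}{1+x^2}\,dx<\infty$ (only the positive part of $\log m$ must be integrable), i.e.\ (c). Conversely, (c) forces $m(x)<\infty$ for a.e.\ $x$, so evaluation at some real point is bounded on $\EE$; the same propagation-and-axiom argument as in the proof of (a)$\Rightarrow$(b) — which used only finiteness of $m$ at a single point — then shows $\HH$ is a de~Branges space, giving (b). The delicate point in both directions is the passage from boundedness of evaluation at one point to boundedness everywhere (M.~Riesz propagation); it is exactly here that (E2) is used to divide out zeros, and it is the part most likely to require care.
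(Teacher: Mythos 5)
Your derivation of the M.~Riesz identity $F(z)\psi_H(z)=H(z)\psi_F(z)$ from annihilation of $\EE*\EE$ alone is careful (arguably more careful than the paper, which integrates $(F(t)-F(z))/(t-z)\in\EE$ directly against $\sigma$), and your (b)$\Rightarrow$(a) via spectral measures coincides with the paper's. But your architecture --- proving (a)$\Rightarrow$(b) directly, rather than the paper's cycle (c)$\Rightarrow$(b)$\Rightarrow$(a)$\Rightarrow$(c) --- runs into a genuine gap exactly at the point you flag as ``most likely to require care,'' and you do not resolve it. The bound $m(z)\le C(z)/\abs{g(z)}$ controls point evaluations only at non-real $z$ off the zero set of $g$, and it is \emph{not} locally bounded near $\R$ (both $C(z)\sim 1/\abs{\Im z}$ and $1/\abs{g(z)}$ can blow up at the boundary). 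Hence an $L^2(\mu)$-Cauchy sequence in $\EE$ converges locally uniformly only on $\C\setminus\R$, and its limit is a priori just a pair of holomorphic functions on the two half-planes --- not an entire function, and without bounded evaluations at real points, which the de~Branges axioms require. The assertion in your (c)$\Rightarrow$(b) that the argument ``used only finiteness of $m$ at a single point'' is where this fails: the one-point-to-everywhere propagation in the classical M.~Riesz theorem uses the three-term recurrence / Nevanlinna parametrization of orthogonal polynomials and has no analogue for general $\EE$ satisfying (E1)--(E3); in particular it cannot reach real points $w$, where $1/(t-w)\notin L^2(\mu)$ and the division trick loses control of the norm.

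The paper closes this gap by establishing (c) \emph{first}: from $m(x+i)\le C'/\abs{K\sigma(x+i)}$ and the fact that $K\sigma(\cdot+i)$ is a bounded, nonvanishing analytic function on $\C_+$ (so $\log^-\abs{K\sigma(\cdot+i)}\in L^1(\Pi)$), one gets $\log m(\cdot+i)\in L^1(\Pi)$, and then transfers this to $\R$ via (E3) and the Poisson representation for the Cartwright class. Only with $\int \log^+ m(t)\,d\Pi(t)<\infty$ in hand does the estimate $\log m(z)\le a\abs{\Im z}+C(1+\abs{z})^2/\abs{\Im z}$ hold, and Levinson's $\log\log$ theorem then gives local boundedness of $m$ on compacta meeting $\R$; that is what makes the limit functions entire and the closure a de~Branges space. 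You must supply this step (or an equivalent quantitative bound on $m$ near the real axis) before concluding (b) from either (a) or (c); as written, both of those implications are incomplete. Your direct (b)$\Rightarrow$(c) via $m(x)^2=K_E(x,x)=\pi^{-1}\phi'(x)\abs{E(x)}^2$ is a legitimate alternative to the paper's detour through (a), though the claims $\phi(x)=O(x)$ and the Jensen estimate for $\int\log^+\phi'\,d\Pi$ would need to be written out.
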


As we already mentioned, a statement similar to the equivalence of (a) and (c) was recently presented in~\cite{Sod}. As a simple corollary we obtain the following result.

\begin{corollary}\label{Riesz1} A positive finite measure $\mu$ is $a$-determinate if and only if $$\int \frac{\log{m_{a/2}^{\mu}(x)}}{1+x^2} dx = \infty,$$ where $$m_{a/2}^{\mu}(w) := \sup \{ |F(w)| : F \in \spa\{e^{ixt}: t\in[-a/2,a/2]\}\text{ and } \|F\|_{L^2(\mu)} \leq 1 \}.$$
\end{corollary}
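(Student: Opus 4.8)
The plan is to deduce Corollary~\ref{Riesz1} directly from Theorem~\ref{maj} by choosing $\EE$ to be a well-adapted linear space of entire functions. The natural candidate is $\EE=\spa\{e^{ixt}:t\in[-a/2,a/2]\}$, so that $\EE*\EE=\spa\{e^{ixt}:t\in[-a,a]\}$ and, for a positive finite measure $\mu$, the notion of $\EE*\EE$-indeterminacy coincides with the notion of $a$-indeterminacy from the introduction (this is exactly the remark in the paper that $a$-determinacy is equivalent to $\BB_a$- or $\PWa$-determinacy; I would first record that the span of $\{e^{ixt}:t\in[-a,a]\}$ is the same testing class as $\PWa$ modulo density in $L^1$, since finite linear combinations of exponentials are dense among Fourier transforms of measures supported in $[-a,a]$, and conversely every element of $\PWa$ is uniformly approximable by such combinations). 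Thus the contrapositive of the equivalence (a)$\Leftrightarrow$(c) of Theorem~\ref{maj}, applied to this $\EE$, reads precisely: $\mu$ is $a$-determinate if and only if $\int \log m(x)/(1+x^2)\,dx=\infty$, where $m=m_{a/2}^\mu$ is the stated majorant. That is the assertion of the corollary.

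The steps, in order, are as follows. First I would verify that $\EE=\spa\{e^{ixt}:t\in[-a/2,a/2]\}$ satisfies (E1)--(E3): (E1) is immediate since $\overline{F(\bar z)}$ replaces $t$ by $-t$ inside the span, which is symmetric about $0$; (E3) holds with exponential type $a/2$ because each $e^{ixt}$ with $|t|\le a/2$ has type $a/2$ and a finite linear combination of Cartwright functions of type $\le a/2$ is again in $\CC_{a/2}$; (E2) is the only one requiring a small computation, namely that $(F(z)-F(w))/(z-w)$ stays in the span — for a single exponential $e^{itz}$ one has $(e^{itz}-e^{itw})/(z-w)=it\int_0^1 e^{it(w+s(z-w))}\,ds$, which is a limit of finite linear combinations of exponentials $e^{it\xi}$ with $|\xi|$ still $\le a/2$; since the span as defined is the algebraic span, I would instead note that (E2) is really needed for $\EE*\EE$ and observe that one may equally run Theorem~\ref{maj} with the norm-closure, or simply remark that the difference quotient of a finite exponential sum is again a finite exponential sum of the same or smaller type after integrating, so closing $\EE$ under this operation does not enlarge the type, hence does not affect $m$ or the integrability condition. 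Second, I would check the integrability hypothesis of Theorem~\ref{maj}, $\EE\subset L^1(\mu)\cap L^2(\mu)$: since $\mu$ is a \emph{finite} positive measure and every $F\in\EE$ is bounded on $\R$ (being a finite sum of unimodular exponentials times constants), we have $\EE\subset L^\infty(\R)\subset L^1(\mu)\cap L^2(\mu)$ trivially. Third, I would identify $\EE*\EE$-indeterminacy with $a$-indeterminacy as above, and then quote (a)$\Leftrightarrow$(c) of Theorem~\ref{maj} and take contrapositives.

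The main obstacle — really the only nontrivial point — is the bookkeeping around the algebraic span versus its closures and the matching of the testing class $\spa\{e^{ixt}:t\in[-a,a]\}$ with the class $\PWa$ (equivalently $\BB_a$) that defines $a$-determinacy in the introduction. One must be careful that ``there is no other positive measure agreeing on $\EE*\EE$'' is genuinely the same condition as ``$\hat\mu=\hat\nu$ on $[-a,a]$.'' If $\hat\mu=\hat\nu$ on $[-a,a]$ then $\int e^{ixt}\,d\mu=\int e^{ixt}\,d\nu$ for all $|t|\le a$, hence they agree on $\EE*\EE$ by linearity; conversely, agreement on all $e^{ixt}$, $|t|\le a$, is by definition $\hat\mu=\hat\nu$ on $[-a,a]$. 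So in fact $\EE*\EE$-agreement of two finite positive measures is \emph{literally} the same as Fourier agreement on $[-a,a]$, and no density argument is even needed — the only place density enters is the passing remark that this equals $\BB_a$-determinacy, which we do not need here. I would therefore present the proof as: take $\EE=\spa\{e^{ixt}:t\in[-a/2,a/2]\}$; note it satisfies (E1)--(E3) and $\EE\subset L^\infty\subset L^1(\mu)\cap L^2(\mu)$; observe $\EE*\EE=\spa\{e^{ixt}:t\in[-a,a]\}$ so that $\EE*\EE$-(in)determinacy of a finite positive $\mu$ is $a$-(in)determinacy; and conclude by the equivalence (a)$\Leftrightarrow$(c) of Theorem~\ref{maj}, whose failure of (c) is exactly the divergence of $\int \log m_{a/2}^\mu(x)/(1+x^2)\,dx$.
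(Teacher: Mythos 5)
Your proposal is correct and follows essentially the paper's own route: the paper obtains the corollary by specializing Theorem~\ref{maj}, remarking only that ``in the case $\EE=\BB_a$ we obtain Corollary~\ref{Riesz1}'', i.e.\ it takes $\EE$ to be a Bernstein class rather than the algebraic exponential span, and your identification of $\EE*\EE$-agreement with Fourier agreement on $[-a,a]$ is exactly the intended reduction. The one point to tighten is your (E2) discussion: the difference quotient of a finite exponential sum is the integral superposition $it\int_0^1 e^{it(w+s(z-w))}\,ds$ of exponentials with frequencies still in $[-a/2,a/2]$, not itself a finite sum, so the algebraic span genuinely fails (E2); the clean fix is either to run the argument with $\BB_{a/2}$ (which is closed under difference quotients, as in the paper) or to enlarge $\EE$ to Fourier transforms of finite measures on $[-a/2,a/2]$, noting that this leaves the majorant $m_{a/2}^{\mu}$ unchanged because such transforms are pointwise and dominated (hence $L^2(\mu)$) limits of Riemann-sum exponential combinations.
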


\begin{proof} First we prove that the integrability of  $\log{m(x)}/(1+x^2)$ implies that the $L^2(\mu)$-closure of $\EE$ is a de~Branges space. We will denote this closure by $\BB(\mu)$.

Each function $F$ in $\EE$ is in the Cartwright class $\CC_a$. Therefore $$\log|F(z)|\leq a|\Im z|+\frac{|\Im z|}{\pi}\int\frac{\log^+|F(t)|}{|t-z|^2}dt.$$ Taking a supremum over all $F(z)\in \EE$ with norm no greater than $1$ we first obtain
\begin{equation}\label{Cart}
\log|F(z)|\leq a|\Im z|+\frac{|\Im z|}{\pi}\int\frac{\log^+m(t)}{|t-z|^2}dt,
\end{equation}
for all non-real $z$. Using the simple estimate
$$\sup_{t\in\R}\abs{\frac{t-i}{t-z}}\leq \frac{1+\abs{z}}{\abs{\Im z}},$$ which is valid for all $z\notin \R$, we obtain
\begin{equation}
\log|F(z)|\leq a|\Im z|+C\frac{(1+\abs{z})^2}{\abs{\Im z}},
\end{equation}
where
$$C=\frac{1}{\pi}\int\frac{\log^+m(t)}{1+t^2}dt<\infty.$$
This immediately shows that $m(z)$ is bounded on compact sets that don't intersect the real axis. Standard application of the Levinson's log log theorem shows that $m(z)$ is also bounded on compact sets that intersect the real axis.

Let now $\{F_n\}$ be a sequence of functions in  $\EE$ which converges in $L^2(\mu)$ to some limit $f\in L^2(\mu)$. The goal is to show that $f$ is actually a restriction of some entire function $F\in  \BB(\mu)$. To see this notice that $\{F_n\}$ being Cauchy and the inequality  $$|F_k(z)-F_l(z)|\leq m(z)\|F_k-F_l\|_{L^2(\mu)},$$ together with the fact that $m(z)$ is locally bounded imply that $\{F_n\}$ converges uniformly on compact sets to some entire function $F$ which agrees with $f$ a.e. $\mu$. Since $F$ satisfies~\eqref{Cart}, $F(z)\in\CC_a$.

Let $F \in \BB(\mu)$ and let $F_n \in \EE$ be a sequence that converges to $F$ in $L^2(\mu)$.  Since $F_n(z)$ converges to $F(z)$ uniformly on compact sets, we have that for any $w \in \C$
\begin{equation}\label{exptype}
\frac{|F(w)|}{\|F\|} = \lim_{n \to \infty}\frac{|F_n(w)|}{\|F_n\|} \leq m(w).
\end{equation}
Therefore, non-real point evaluations are bounded in $\BB(\mu)$.  By considering $F_n^\#\in\EE$ we see that $F^{\#} \in \BB(\mu)$. By the axiomatic definition of de~Branges spaces, see \cite{dBr}, it remains to show that $F(z)(z-\bar{z}_0)/(z-z_0)\in \BB(\mu)$ for all $F\in \BB(\mu)$ and $z_0\notin\R$ such that $F(z_0)=0$. Since $$ F(z)\frac{z-\bar{z}_0}{z-z_0}=F(z)+(z_0-\bar{z}_0)\frac{F(z)}{z-z_0},$$ it is enough to show that $F(z)/(z-z_0)\in \BB(\mu)$, which again follows from the property that $(F_n(z)-F_n(z_0))/(z-z_0)\in\EE$. Therefore $\BB(\mu)$ is a de~Branges space.

If $\BB(\mu)$ is a de~Branges space pick any spectral measure $\nu$ of $\BB(\mu)$ which is different from $\mu$ (in case $\mu$ is one of them). Let $F\in\EE*\EE$ be arbitrary. Then by definition $F=GH$ for some $G, H\in \EE$. Since $\BB(\mu)$ is isomorphic to $L^2(\nu)$ and is isometrically contained in $L^2(\mu)$ we obtain that
$$\int F(t)d\mu(t)=\int G(t) \overline{H^\#(t)}d\mu(t)=\int F(t)d\nu(t).$$
Thus, $\mu$ is  $\EE*\EE$-indeterminate.

Next we show that if $\mu$ is $\EE*\EE$-indeterminate then $$\int \frac{\log{m(x)}}{1+x^2} dx < \infty.$$ If $\mu$ is $\EE$-indeterminate there exists another positive finite measure $\nu\neq\mu$ such that $\int F(t)d\mu(t)=\int F(t)d\nu(t)$ for all $F\in\EE*\EE$. This implies
\begin{equation}\label{eq1}
\int |F(t)|^2d\mu(t)=\int |F(t)|^2d\nu(t),
\end{equation} for all $F(z)\in \EE$.

Consider $\sigma=\mu-\nu$. We have that $$\int \frac{F(t)-F(z)}{t-z}d\sigma(t)=0,$$ for every  function $F(z)\in \EE$  and every $z\in \C$. This implies that $$F(z)=\frac{1}{G(z)}\int\frac{F(t)}{t-z}d\sigma(t),$$ where $G(z)=\int \frac{d\sigma(t)}{t-z}$. In particular, for $z=x+i$ with $x\in \R$ we have, $$|F(x+i)|\leq  \frac{1}{|G(x+i)|}\int |F(t)|d|\sigma|(t)\leq \frac{C}{|G(x+i)|}\left(\int|F(t)|^2d|\sigma |(t)\right)^{1/2}.$$ Now since $|\sigma| \leq \mu+\nu$ we obtain, $$|F(x+i)|\leq \frac{\sqrt{2}C}{|G(x+i)|}\left(\int|F(t)|^2d\mu(t)\right)^{1/2},$$ for every $F(z)\in \EE$. Consequently, $$m(x+i)\leq \frac{C'}{|G(x+i)|}.$$ The fact that $G(z+i)$, as a function of $z$, is analytic and bounded in $\C_+$ implies that $$\int \frac{\log{m(x+i)}}{1+x^2} dx \leq \int \frac{\log^-|G(x+i)|}{1+x^2}dx  + C''<\infty.$$

Now, let $F(z)$ be arbitrary element of $\EE$ with norm no greater than $1$. Then $F(z+i)$ as a function of $z$ is in the Cartwright class $\CC_{a}$. Therefore, for all $x\in \R$ we have $$\log|F(x)|\leq a+\frac{1}{\pi}\int\frac{\log^+|F(t+i)|}{(t-x)^2+1}dt.$$ Taking a supremum over all such $F(z)$ we first obtain $$\log|F(x)|\leq a+\frac{1}{\pi}\int\frac{\log^+|m(t+i)|}{(t-x)^2+1}dt,$$ and then again by taking a supremum on the left we deduce
$$\log m(x)\leq a+\frac{1}{\pi}\int\frac{\log^+|m(t+i)|}{(t-x)^2+1}dt.$$  Finally,
\begin{eqnarray*} \int \frac{\log{m(x)}}{1+x^2} dx &\leq& \frac{a\pi}{2}+\frac{1}{\pi}\int\int\frac{\log^+|m(t+i)|}{(t-x)^2+1}\frac{1}{1+x^2}dtdx \\ &=& a\pi+\int\frac{2\log m(x+i)}{4+x^2}dx<\infty.
\end{eqnarray*}
This completes the proof.

\end{proof}

Notice that the set of all polynomials $\PP$ satisfies $\PP*\PP=\PP$, so we derive the classical M.~Riesz criterion as a special case. In the case $\EE=\BB_a$ we obtain Corollary~\ref{Riesz1}.

As we mentioned in the introduction, the usefulness of the criterion above depends on the possibility to estimate the majorant $m(w)$. Below we consider some cases when this can be done.

\begin{corollary}\label{uniform} Let $\mu$ be a positive finite measure.  If there exists a non negative uniformly continuous function $w(t)$ on $\R$ satisfying $e^{w(t)}\in L^1(\mu)$ and $$\int\frac{w(t)}{1+t^2}dt=\infty,$$ then $\mu$ is $a$-determinate for every $a>0$.
\end{corollary}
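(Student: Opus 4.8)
The plan is to apply Corollary~\ref{Riesz1}: it suffices to show that the majorant $m_{a/2}^\mu$ associated with $\EE=\BB_{a/2}$ (equivalently $\PWa$, or the span of $e^{ixt}$, $t\in[-a/2,a/2]$) has divergent logarithmic integral, i.e. $\int \log m_{a/2}^\mu(x)/(1+x^2)\,dx=\infty$. So the whole problem reduces to a pointwise lower bound on $m_{a/2}^\mu(x)$ in terms of the weight $w$, and then checking that this lower bound, fed through the logarithmic integral, diverges because $\int w(t)/(1+t^2)\,dt=\infty$.

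First I would fix $x\in\R$ and try to exhibit, for each such $x$, a single test function $F=F_x\in\BB_{a/2}$ (or a suitable exponential-type function in the relevant span) with $\|F_x\|_{L^2(\mu)}\le 1$ and $|F_x(x)|$ comparably large — roughly of size $e^{cw(x)}$ for some constant $c>0$ depending only on $a$. The natural candidate is a normalized reproducing-kernel-type bump: take a fixed nonnegative function $g\in\PWa[b]$ for some small $b<a/2$ with $g(0)>0$ (e.g. a Fejér kernel $\bigl(\sin(bz/2)/(bz/2)\bigr)^2$, which is nonnegative on $\R$, has exponential type $b$, and is in $L^2$), modulate it to be centered at $x$ by multiplying by $e^{i(a/2-b)(z-x)}$ (keeping total type $\le a/2$), and normalize in $L^2(\mu)$. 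The key computation is then to bound $\|g(\cdot-x)\|_{L^2(\mu)}^2=\int |g(t-x)|^2\,d\mu(t)$ from above. Since $g$ decays like $(1+|t-x|)^{-2}$ and $e^{w}\in L^1(\mu)$ with $w$ uniformly continuous (hence, say, $w(t)\ge w(x)-L|t-x|$ for a Lipschitz-type bound, or at least $w(t)\ge w(x)-\eta(|t-x|)$ for a modulus of continuity $\eta$), one gets $|g(t-x)|^2\le C(1+|t-x|)^{-4}\le C' e^{-w(t)}e^{w(x)}\cdot(\text{something integrable})$ — the point being to absorb the $e^{w(x)}$ factor and use $\int e^{w(t)}\,d\mu(t)<\infty$ together with the polynomial decay of $g$ to control the tail. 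This yields $\|g(\cdot-x)\|_{L^2(\mu)}^2\lesssim e^{-w(x)}\cdot\mathrm{const}$, so after normalizing, $m_{a/2}^\mu(x)\ge |F_x(x)|\gtrsim e^{w(x)/2}/\sqrt{\mathrm{const}}$, whence $\log m_{a/2}^\mu(x)\ge \tfrac12 w(x)-C$, and $\int \log m_{a/2}^\mu(x)/(1+x^2)\,dx=\infty$ by hypothesis, giving $a$-determinacy for every $a>0$ via Corollary~\ref{Riesz1}.

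I expect the main obstacle to be the uniform control of $\|g(\cdot-x)\|_{L^2(\mu)}$ across all $x$, i.e. making the implied constant genuinely independent of $x$. The subtlety is that a naive bound $w(t)\ge w(x)-L|t-x|$ requires $w$ to be Lipschitz, whereas we are only given uniform continuity; one must instead work with the modulus of continuity $\eta$ and split the integral $\int|g(t-x)|^2 d\mu(t)$ into a region $|t-x|\le R$ (where $w(t)\ge w(x)-\eta(R)$, so $e^{w(t)}\ge e^{-\eta(R)}e^{w(x)}$, controlling $\mu$ locally near $x$) and a tail $|t-x|>R$ (where $|g(t-x)|^2\le C R^{-2}(1+|t-x|)^{-2}$ decays fast enough that $\int_{|t-x|>R} d\mu$ times this is small, using only finiteness of $\mu$). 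Choosing $R$ suitably (perhaps growing slowly with a parameter, or simply fixed large) one balances the two contributions to obtain $\|g(\cdot-x)\|_{L^2(\mu)}^2\le C e^{-w(x)}$ with $C$ uniform in $x$. A secondary technical point is to confirm the modulation $e^{i(a/2-b)(z-x)}g(z-x)$ indeed lies in the span $\spa\{e^{ixt}:t\in[-a/2,a/2]\}$ used in Corollary~\ref{Riesz1} (or, more cleanly, to work with $\EE=\BB_{a/2}$ directly, using that $a$-determinacy $\iff$ $\BB_{a/2}$-determinacy as noted after (E3)), and that $|g(t-x)|^2 = g(t-x)^2$ pointwise so that the $L^2(\mu)$ norm is literally an integral of the squared Fejér kernel — which is elementary.
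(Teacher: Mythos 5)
Your reduction to Corollary~\ref{Riesz1} is the same first step the paper takes, but the heart of your argument --- the pointwise lower bound $m_{a/2}^{\mu}(x)\gtrsim e^{w(x)/2}$ obtained from a single translated Fej\'er-type bump --- does not work, and no choice of the cutoff radius $R$ repairs it. The claimed estimate $\int|g(t-x)|^2\,d\mu(t)\le C e^{-w(x)}$ is simply false in general: take $w(t)=|t|$ (nonnegative, Lipschitz, with divergent logarithmic integral) and $d\mu(t)=e^{-2|t|}\,dt$, so that $e^{w}d\mu=e^{-|t|}dt$ is finite. For large $x>0$ the mass of $\mu$ sitting near the origin contributes $\int_{|t|\le 1}|g(t-x)|^2\,d\mu(t)\gtrsim x^{-4}$, which is enormously larger than $e^{-w(x)}=e^{-x}$; after normalization your test function only yields $m(x)\gtrsim x^{2}$, i.e. $\log m(x)=O(\log x)$, whose logarithmic integral converges. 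Your proposed splitting cannot save this: with $R$ fixed the tail term is of order $R^{-4}\mu(\R)$, which is a constant rather than $O(e^{-w(x)})$, while taking $R\gtrsim e^{w(x)/4}$ to kill the tail makes the near-region estimate blow up, since uniform continuity only gives $w(t)\ge w(x)-C(1+|t-x|)$ and $e^{\eta(R)}$ then becomes doubly exponential in $w(x)$. The obstruction is structural, not technical: a nonzero function of exponential type bounded on $\R$ lies in the Cartwright class and hence cannot decay exponentially on the line, so no admissible test function can localize at $x$ sharply enough to defeat mass of $\mu$ living where $w$ is small.

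What the paper does instead is precisely to avoid any pointwise lower bound. After replacing uniform continuity by a uniform Lipschitz condition (Koosis, p.~96), it observes that every entire $F$ of exponential type at most $a$ with $|F|\le e^{w/2}$ on $\R$ automatically satisfies $\int|F|^2d\mu\le\int e^{w}d\mu<\infty$, so such functions are (up to a fixed normalization) admissible competitors for the majorant; it then invokes the de~Branges theorem (the corollary on p.~236 of Koosis) which asserts that the sup of $|F(x)|$ over this sup-norm-weighted class already has divergent logarithmic integral whenever $\int w\,d\Pi=\infty$. That theorem is the genuine content here --- its proof goes through a dual extremal problem and extreme-point analysis, and it produces divergence of the logarithmic integral of the majorant \emph{collectively}, without ever asserting that $m(x)$ is comparable to $e^{w(x)/2}$ at individual points. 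Your proof as written is missing exactly this ingredient and cannot be completed along the lines you propose.
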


\begin{remark} This lemma can be viewed as a stronger version of de Branges' gap theorem \cite{dBr}, which claims that if there is a function $w$ like in the lemma,
that satisfies $e^{w(t)}\in L^1(|\mu|)$ for a finite non-zero real measure $\mu$, then the support of $\hat\mu$ may not have any gaps. Our
statement can be equivalently reformulated as follows: if $e^{w(t)}\in L^1(\mu_+)$ then the support of $\hat\mu$ does not have any gaps.
\end{remark}

\begin{proof} First note that we may replace the uniform continuity assumption on $w(t)$ by the stronger one that $w(t)$ is uniformly Lipschitz (see~\cite[pg. 96]{Koo}). Next notice that every entire function $F(z)$ of exponential type at most $a$ for which $F(t)/e^{w(t)/2}\leq 1$ on $\R$ must also satisfy $\int |F(t)|^2d\mu(t)\leq 1$. Therefore, by combining the corollary in~\cite[pg. 236]{Koo} (for $W(t)=e^{w(t)/2}$) with Theorem~\ref{Riesz1}  we obtain $$\int \frac{\log{m_{a}^{\mu}(t)}}{1+t^2} dt = \infty,$$ for every $a>0$. The result now follows from Theorem~\ref{maj}. It should be noted that the idea used here is due to de~Branges.
\end{proof}

We next prove a strengthening of Levinson's gap result~\cite{Lev}.

\begin{corollary} Let $\mu$ be a positive finite measure.  If there exists a non negative function $w(t)$ on $\R$ which is increasing on $[0,\infty)$ and satisfies $e^{w(t)}\in L^1(\mu)$ and $$\int_1^{\infty}\frac{w(t)}{1+t^2}dt=\infty,$$ then $\mu$ is $a$-determinate for every $a>0$.
\end{corollary}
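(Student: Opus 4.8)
The plan is to reduce this statement to the previous corollary (Corollary~\ref{uniform}) by replacing the monotone weight $w$ by a uniformly continuous --- in fact $1$-Lipschitz --- minorant $\tilde w$ which still has a divergent logarithmic integral. Once such a $\tilde w$ is constructed, it is nonnegative, satisfies $e^{\tilde w}\le e^{w}$ on $[1,\infty)$ (and is bounded, hence $\mu$-integrable, off $[1,\infty)$ because $\mu$ is finite), so that $e^{\tilde w}\in L^1(\mu)$, and $\int \tilde w(t)(1+t^2)^{-1}\,dt=\infty$; then Corollary~\ref{uniform} applies and gives that $\mu$ is $a$-determinate for every $a>0$. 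So the whole work is in producing $\tilde w$.

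To build $\tilde w$, I would first record that the hypothesis, together with monotonicity, forces a divergence at dyadic scales. Writing $v_k=w(2^k)$, monotonicity of $w$ gives $\int_{2^k}^{2^{k+1}} w(t)(1+t^2)^{-1}\,dt\le v_{k+1}2^{-k-1}$, so $\sum_{k\ge 1}v_k 2^{-k}\ge \int_1^\infty w(t)(1+t^2)^{-1}\,dt=\infty$. Now set $u_k=\min(v_k,2^k)$; this is again a nondecreasing sequence with $0\le u_k\le 2^k$, and I claim $\sum_k u_k2^{-k}=\infty$ as well: either $v_k\le 2^k$ for all large $k$, in which case the tail of $\sum_k u_k2^{-k}$ coincides with that of $\sum_k v_k2^{-k}$, or $v_k>2^k$ for infinitely many $k$, in which case infinitely many terms of $\sum_k u_k2^{-k}$ equal $1$. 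Next, define $\tilde w$ on $[1,\infty)$ to be the piecewise-linear function interpolating the value $u_{k-1}$ at $t=2^k$ and $u_k$ at $t=2^{k+1}$ (a one-step ``delay''), and extend $\tilde w$ by the constant $u_0$ on $(-\infty,1]$. Since $0\le u_k-u_{k-1}\le u_k\le 2^k$, every slope of $\tilde w$ lies in $[0,1]$, so $\tilde w$ is $1$-Lipschitz on $\R$; the delay guarantees that on each $[2^k,2^{k+1}]$ one has $\tilde w(t)\le u_k\le v_k=w(2^k)\le w(t)$, giving $e^{\tilde w}\le e^{w}$ there; and, since $\tilde w(t)\ge u_{k-1}$ and $(1+t^2)^{-1}\ge c\,4^{-k}$ on $[2^k,2^{k+1}]$, one gets $\int \tilde w(t)(1+t^2)^{-1}\,dt\ge c'\sum_k u_{k-1}2^{-k}=\infty$.

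The one genuinely delicate point is reconciling the three demands on $\tilde w$: it must sit below $w$ (for $\mu$-integrability of $e^{\tilde w}$), be Lipschitz (to feed into Corollary~\ref{uniform}), and retain a divergent logarithmic integral --- and a monotone $w$ may jump arbitrarily fast, so one cannot merely ``connect the graph of $w$.'' The resolution is the capping $u_k=\min(v_k,2^k)$: capping at the natural Lipschitz scale $2^k$ simultaneously removes the jumps and, crucially, each capped term contributes on the order of $1$ to the dyadic series, so no divergence is lost. Everything else --- the dyadic comparison estimates for the logarithmic integral and the elementary dichotomy used to conclude $\sum_k u_k2^{-k}=\infty$ --- is routine bookkeeping.
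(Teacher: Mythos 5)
Your proposal is correct and follows essentially the same route as the paper: reduce to the preceding corollary (for uniformly continuous weights) by replacing $w$ with a $1$-Lipschitz minorant whose logarithmic integral still diverges. The only difference is that the paper obtains this minorant by taking the largest $1$-Lipschitz minorant of $w$ and citing a lemma from Koosis's \emph{The Logarithmic Integral} (p.~239) for the divergence, whereas you construct an explicit minorant via dyadic capping $u_k=\min(w(2^k),2^k)$ and verify the divergence by hand, which makes the argument self-contained; your verification of the slopes, the domination $\tilde w\le w$, and the dichotomy ensuring $\sum u_k2^{-k}=\infty$ all check out.
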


\begin{remark}
Equivalently, if $w$ is like in the last corollary and $\mu$ is a real non-zero finite measure such that $e^{w(t)}\in L^1(\mu_+)$ then $\supp\hat\mu$ has no gaps. The original theorem by Levinson has a stronger requirement that $e^{w(t)}\in L^1(|\mu|)$
\end{remark}

\begin{proof} Let $v(t)$ be the largest minorant of $w(t)$ which is uniformly Lipschitz with Lipschitz constant $1$. Then (see the lemma in~\cite[pg. 239]{Koo})  $$\int \frac{v(t)}{1+t^2}dt=\infty,$$ and the result follows from the previous corollary.
\end{proof}

Finally, we prove a strengthening of the Beurling's gap theorem as discussed in the introduction.

\gap*



\begin{proof} Define $w(t)$ as zero outside of the intervals $(a_n,b_n)$ and
  as $\textrm{dist} (t, \R\setminus (a_n,b_n))$ on $(a_n,b_n)$. This function is clearly non negative, uniformly continuous and satisfies $e^{w(t)}\in L^1(\mu)$. Moreover, the assumption that $\cup (a_n,b_n)$ is long yields that $$\int\frac{w(t)}{1+t^2}dt=\infty.$$ The result now follows from Corollary~\ref{uniform}.
\end{proof}

\section{Extreme measures in the indeterminate case}\label{extreme}
Let $\AA$ be a set of functions on $\R$.
For a given positive measure $\mu$ with $\AA\subset L^1(\mu)$ denote by $\MM_{\AA}(\mu)$ the set of all finite positive measures $\nu$ such that
$\AA\subset L^1(\nu)$ and $$\int F(t)d\mu(t)=\int F(t) d\nu(t)$$ for all $F\in \AA$. Clearly, this set is convex and therefore it is interesting to describe its extreme points (if they exist). The following result, which in essence goes back to M. Naimark, gives a description of the extreme points of $\MM_{\AA}(\mu)$.

\begin{theorem} A positive finite measure $\nu\in \MM_{\AA}(\mu)$ is an extreme point of $\MM_{\AA}(\mu)$ if and only if the span of the set $\AA$ is dense in  $L^1(\nu)$.
\end{theorem}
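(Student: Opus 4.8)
The statement is a Hahn–Banach / Naimark-type characterization of extreme points of the convex set $\MM_\AA(\mu)$, so the plan is to argue both implications by a standard perturbation argument, being careful about the $L^1$-integrability constraints built into the definition of $\MM_\AA(\mu)$.

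First I would prove the easy direction: if $\spa\AA$ is \emph{not} dense in $L^1(\nu)$, then $\nu$ is not extreme. If the closure of $\spa\AA$ is a proper subspace of $L^1(\nu)$, Hahn–Banach gives a nonzero $g\in L^\infty(\nu)$ (the dual of $L^1(\nu)$) with $\int F g\, d\nu = 0$ for all $F\in\AA$; normalizing, we may take $\norm{g}_\infty\le 1$. Then $\nu_\pm := (1\pm g)\,d\nu$ are two distinct positive finite measures, each absolutely continuous with respect to $\nu$ with bounded density, so $\AA\subset L^1(\nu_\pm)$, and $\int F\,d\nu_\pm = \int F\,d\nu\pm\int Fg\,d\nu = \int F\,d\mu$ for all $F\in\AA$. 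Hence $\nu_\pm\in\MM_\AA(\mu)$ and $\nu = \tfrac12(\nu_+ + \nu_-)$ with $\nu_+\ne\nu_-$, so $\nu$ is not extreme.

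For the converse, suppose $\spa\AA$ is dense in $L^1(\nu)$ and $\nu = \tfrac12(\nu_1+\nu_2)$ with $\nu_1,\nu_2\in\MM_\AA(\mu)$; I must show $\nu_1=\nu_2$. Since $0\le\nu_i\le 2\nu$, each $\nu_i$ is absolutely continuous with respect to $\nu$, say $d\nu_i = h_i\,d\nu$ with $0\le h_i\le 2$ and $h_1+h_2=2$ $\nu$-a.e.; set $g := h_1 - 1 = 1 - h_2\in L^\infty(\nu)$, so $\norm{g}_\infty\le 1$ and $d\nu_1 - d\nu = g\,d\nu$. For every $F\in\AA$ we have $\int F\,d\nu_1 = \int F\,d\mu = \int F\,d\nu$, hence $\int F g\,d\nu = 0$ for all $F\in\AA$, and therefore for all $F$ in $\spa\AA$. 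Because $\spa\AA$ is dense in $L^1(\nu)$ and $g\in L^\infty(\nu) = (L^1(\nu))^*$, the functional $F\mapsto\int Fg\,d\nu$ vanishes on a dense subspace of $L^1(\nu)$, so it is the zero functional, which forces $g=0$ $\nu$-a.e.; thus $\nu_1 = \nu$, and likewise $\nu_2=\nu$.

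The only genuinely delicate point — the ``main obstacle'' such as it is — is bookkeeping around the integrability side conditions in the definition of $\MM_\AA(\mu)$: one must verify that the perturbed measures $\nu_\pm = (1\pm g)\,d\nu$ really lie in $\MM_\AA(\mu)$, i.e. that $\AA\subset L^1(\nu_\pm)$. This is immediate here since $|1\pm g|\le 2$, so $\int|F|\,d\nu_\pm \le 2\int|F|\,d\nu < \infty$ for $F\in\AA$; the boundedness of the Hahn–Banach functional $g$ is exactly what makes the argument go through without extra hypotheses. (If one wanted $\AA$ to be a space of unbounded functions against arbitrary positive measures, one would need to be more careful, but the bounded-density perturbation used above sidesteps that entirely.) Everything else is the standard duality $ (L^1(\nu))^* = L^\infty(\nu)$ together with the elementary fact that a bounded linear functional vanishing on a dense subspace is zero.
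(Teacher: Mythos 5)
Your proof is correct and follows essentially the same route as the paper's: both directions reduce to the $L^1(\nu)$--$L^\infty(\nu)$ duality together with the bounded-density perturbation $(1\pm g)\,d\nu$. The only cosmetic difference is that you write the perturbed measures explicitly rather than passing through the Riesz representation of the nonnegative functionals on $C_0(\R)$ as the paper does (and both arguments tacitly take $g$ real, which is harmless since in all applications $\AA$ is closed under conjugation on $\R$).
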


\begin{proof} Assume that  $\nu\in \MM_{\AA}(\mu)$ is not an extreme point of $\MM_{\AA}(\mu)$. Then $\nu=\alpha\nu_1+(1-\alpha)\nu_2$ for some $\nu_1, \nu_2\in \MM_{\AA}(\mu)$ and $0<\alpha<1$. It is easy to see that
$$\phi(f)=\int f(t)d\nu(t)-\int f(t)d\nu_1(t),$$ is a non trivial bounded linear functional on $L^1(\nu)$ which vanishes on the set $\AA$. Therefore, the span of $\AA$ cannot be dense in  $L^1(\nu)$.

Conversely, assume that the span of $\AA$ is not dense in  $L^1(\nu)$. Then there exists a nontrivial bounded linear functional $\phi$ on $L^1(\nu)$ which vanishes identically on $\AA$. We can assume that its norm is $1$. In this case both $\phi_1(f)=\int f(t)d\nu(t)-\phi(f)$ and $\phi_2(f)=\int f(t)d\nu(t)+\phi(f)$ are nonnegative linear functionals on $C_0(\R)$ in the sense that both $\phi_1(f)\geq 0$ and $\phi_2(f)\geq 0$ whenever $f(t)\geq 0$ on $\R$.  Therefore, there exist positive finite measures $\nu_1$ and $\nu_2$ such that $\phi_1(f)=\int f(t)d\nu_1(t)$ and  $\phi_2(f)=\int f(t)d\nu_2(t)$ for all $f\in L^1(\nu)$. Notice that $\phi\neq 0$ implies that $\phi_1\neq \phi \neq \phi_2$ and hence $\nu_1\neq \nu \neq \nu_2$.  Also for all $F\in\AA$ we have that $\int F(t)d\nu_1(t)=\int F(t)d\nu(t)=\int F(t)d\nu_2(t)$ and therefore $\nu_1, \nu_2 \in \MM_{\AA}(\mu)$. Finally, since clearly $$\nu=\frac{1}{2}\nu_1+\frac{1}{2}\nu_2$$ we obtain that $\nu$ cannot be an extreme point for $\MM_{\AA}(\mu)$.
\end{proof}

We will call a lower semi-continuous function $W:\R\to [1,\infty)$ a weight on $\R$. We will say that a measure $\mu$ on $\R$ is $W$-finite
if
$$||\mu||_W=\int Wd|\mu|<\infty.$$

Let  $\BB(E)$ be a de Branges space. We will call  a weight $W$ an $E$-weight if
 $F(x)/W(x)=o(1)$ as $x\to\pm\infty$ for any $F\in \BB(E)$.

Such weights exist for any de Branges space and can, in most cases, be chosen in a natural way.
For instance, for the generating Hermit-Biehler function $E$ one may consider entire functions $A=\frac 12(E^\#+E)$ and $B=\frac 1{2i}(E^\#-E)$ and
define  the weight $$W_E(x)=1+(B'(x)A(x)-B(x)A'(x))^{1/2}.$$ Since the reproducing kernels of $\BB(E)$ satisfy
$$||K^E_x||_{\BB(E)}=(B'(x)A(x)-B(x)A'(x))^{1/2}$$
for any $x\in\R$,
any weight $W$, such that $W_E=o(W)$ near infinity, is an $E$-weight.
 Note that $W_E$ itself is not always an $E$-weight: consider for instance the case when $E$ is
a polynomial. However, in the most important example for our purposes, $\BB(E)=\PWa$, $W_E=1+a/\pi$ is an $E$-weight.

If $W$ is an $E$-weight then $\BB(E)\subset L^1(|\mu|)$ for any $W$-finite measure $\mu$.
We say that a measure $\mu$ annihilates  $\BB(E)$ if it is $W$-finite for some $E$-weight $W$ and satisfies
$$\int Fd\mu=0$$
for all $F\in\BB(E)$.

Notice that positive and negative parts of a $\BB_E$-annihilating measure are $\BB_E$-indeterminate.
It will be crucial for us to examine the properties of the extreme points of the set of all  annihilating measures of a given de~Branges space $\BB_E$. Our approach is based on de~Branges' extreme point method~\cite{dBrSW, dBrA, dBrB, dBr}.

Let $A$ and $B$ be two disjoint closed subsets of $\R$ and let $W$ be an $E$-weight.
Denote by $\MM_{E}^W(A,B)$ the set of all real measures $\sigma$, $\|\sigma\|_W\leq 1$, which annihilate $\BB_E$, and such that $\supp\mup\subset A$, $\supp\mum\subset B$.  Here, as usual, $\mup$ and $\mum$ denote the positive and negative parts of $\sigma$ in the canonical Jordan decomposition $\sigma=\mup-\mum$. Although, in view of the problem of $a$-determinacy, the most important particular case for us is when
$E=\exp{(iaz)}$, $\BB_E=\PWa$ and $W=W_E=1+a/\pi$, in this section we study the general case.

In the rest of the statements of this section we assume that $\BB_E\neq \{0\}$ is a non-trivial de Branges space and $W$ is an $E$-weight.


\begin{lemma}\label{KrMil} The set $\MM_E^W(A, B)$ is convex and compact with respect to the  weak-* topology of the set of all $W$-finite measures on $\R$. In particular, if it contains a non-zero measure then it also contains non-zero extreme points.
\end{lemma}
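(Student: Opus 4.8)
The plan is to establish the two assertions separately: convexity, which is immediate, and weak-$*$ compactness, which is the substantive part and which then yields the existence of extreme points via the Krein--Milman theorem.

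Convexity of $\MM_E^W(A,B)$ is routine: if $\sigma_1,\sigma_2$ annihilate $\BB_E$, so does any convex combination $t\sigma_1+(1-t)\sigma_2$; the norm bound $\|\cdot\|_W\le 1$ is preserved by convexity of the norm; and the support conditions $\supp\mup\subset A$, $\supp\mum\subset B$ are preserved because the positive part of $t\sigma_1+(1-t)\sigma_2$ is supported in $\supp\sigma_1^+\cup\supp\sigma_2^+\subset A$, and similarly for the negative part. So I would dispatch this in one sentence.

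For compactness, the idea is to view $\MM_E^W(A,B)$ as a weak-$*$ closed subset of the unit ball of the dual of a suitable separable Banach space, and then invoke Banach--Alaoglu. Concretely, let $C_W$ denote the space of continuous functions $f$ on $\R$ with $f/W\to 0$ at $\pm\infty$, normed by $\|f\|=\sup_{\R}|f/W|$; this is a separable Banach space, and the $W$-finite signed measures are exactly its dual, with $\|\mu\|_W$ the dual norm, acting by $\mu(f)=\int f\,d\mu$. The weak-$*$ topology on $W$-finite measures referred to in the statement is the $\sigma(C_W^*,C_W)$ topology. By Banach--Alaoglu the closed unit ball $\{\|\mu\|_W\le 1\}$ is weak-$*$ compact, and since $C_W$ is separable this ball is metrizable, so it suffices to show $\MM_E^W(A,B)$ is weak-$*$ closed, equivalently sequentially closed. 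So suppose $\sigma_n\in\MM_E^W(A,B)$ and $\sigma_n\to\sigma$ weak-$*$. First, $\|\sigma\|_W\le 1$ by weak-$*$ lower semicontinuity of the dual norm. Second, for each fixed $F\in\BB_E$ we have $F\in C_W$ since $W$ is an $E$-weight (this is precisely the condition $F/W=o(1)$ at infinity, which guarantees both that $F\in C_W$ and that $\int F\,d\sigma_n$ makes sense), hence $\int F\,d\sigma=\lim_n\int F\,d\sigma_n=0$; so $\sigma$ annihilates $\BB_E$.

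The one delicate point — and the step I expect to be the main obstacle — is verifying that the limit measure $\sigma$ still satisfies the support conditions $\supp\sigma^+\subset A$ and $\supp\sigma^-\subset B$, since taking positive/negative parts does not commute with weak-$*$ limits in general. Here I would use that $A$ and $B$ are \emph{disjoint} closed sets. Fix any $g\in C_W$ with $g\ge 0$ and $\supp g\subset\R\setminus A$. For each $n$, writing $\sigma_n=\sigma_n^+-\sigma_n^-$ with $\supp\sigma_n^+\subset A$, we have $\int g\,d\sigma_n=-\int g\,d\sigma_n^-\le 0$. Passing to the limit, $\int g\,d\sigma\le 0$ for every such $g$, and applying this also to $g$ supported in $\R\setminus A$ of either sign on the part of $\R\setminus A$ away from $B$... more cleanly: for $g\ge0$ supported in the open set $\R\setminus A$ we get $\int g\,d\sigma\le 0$, which forces $\sigma^+$ to be supported in $A$ (since $\sigma^+(\R\setminus A)=\sup\{\int g\,d\sigma : 0\le g\le 1,\ \supp g\subset\R\setminus A\}\le 0$, using outer regularity of $\sigma^+$ and Urysohn on the $\sigma$-finite open set $\R\setminus A$). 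Symmetrically, testing against $g\ge 0$ supported in $\R\setminus B$ gives $\int g\,d\sigma\ge 0$, whence $\sigma^-$ is supported in $B$. Thus $\sigma\in\MM_E^W(A,B)$, so the set is weak-$*$ closed, hence compact.

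Finally, the last sentence of the lemma follows from the Krein--Milman theorem: a non-empty compact convex subset of a locally convex Hausdorff space is the closed convex hull of its extreme points, so if $\MM_E^W(A,B)$ contains a non-zero measure it cannot equal $\{0\}$ and therefore must possess extreme points other than (or in addition to) $0$ — in particular non-zero ones, since $\{0\}$ alone cannot have $\MM_E^W(A,B)$ as its closed convex hull.
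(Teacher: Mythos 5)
Your proposal is correct and follows essentially the same route as the paper: convexity is immediate, compactness is obtained by identifying the $W$-finite measures as the dual of $C_0(W)\supset\BB_E$ and checking weak-* closedness, with the support condition for the limit measure verified by testing against nonnegative continuous functions supported off $A$ (resp.\ $B$) via regularity and Urysohn, and Krein--Milman then yields non-zero extreme points. Your variational formula for $\sigma^+(\R\setminus A)$ is just a repackaging of the paper's explicit $K$--$G$--Urysohn contradiction, so there is nothing substantive to add.
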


\begin{proof} Convexity part is obvious. To show that  $\MM^W_E(A,B)$ is weak-* compact it is enough to show that it is weak-* closed. Let $\nu_n\in \MM^W_E(A,B)$ be a sequence of measures that converges to some measure $\nu$ in the weak-* sense. Notice that the predual space of the space of all $W$-finite measures is $C_0(W)$, the space of all
continuous functions $f$ on $\R$ satisfying $f=o(W)$ at infinity. Therefore, $\BB_E$ is a subspace of the predual space $C_0(W)$ and hence $\nu$ must also annihilate $\BB_E$.

It remains to show that $\supp\nup\subset A$ and $\supp\num\subset B$. Suppose that $\supp\nup\nsubseteq A$. Then there exists an open set $S$  disjoint from $A$ such that $\nup(S)>0$. Notice that if $S\cap \supp\num\neq \emptyset$ then $\num(S\cap \supp\num)>0$ in which case $\nup(S\cap \supp\num)=0$. Therefore, without loss of generality we can assume that $\num(S)=0$ because otherwise we can replace $S$ with $S\cap (\supp\num)^c$. Inner regularity of $\nup$ implies that there exists a compact set $K\subset S$ with $\nup(K)>\nup(S)/2$. Similarly, outer regularity of $\num$ implies existence of an open set $G$ such that  $S\subset G\subset A^c$ and $\num(G)<\num(S)+\nup(S)/4=\nup(S)/4$. By Urysohn's lemma there exists a continuous function $f$ which vanishes at infinity and such that $f=1$ on $K$, $f=0$ outside of $G$ and $0\leq f\leq 1$ everywhere. Clearly, $f\in C_0(W)$. Now, it is easy to see that $$\int fd\nu\geq \nup(K)-\num(G)\geq \nup(S)/4>0.$$ On the other hand, for all $n\in\N$ we have $\int f d\nu_n=\int f d\nu^-_n \leq 0$. We have a contradiction. The proof that $\supp\num\subset B$ is similar.

The Krein-Milman theorem now implies that $\MM^W_E(A,B)$ is a closed convex hull of its extreme points, and clearly some of them must be non-zero since $\MM^W_E(A,B)$ contains a non-zero element.

\end{proof}

\begin{lemma}\label{extremel} Let $\nu$ be an extreme point in $\MM^W_E(A,B)$. Then any function $f\in L^1(|\nu|)$ which is not in the $L^1(|\nu|)$-closure of $\BB_E$ must satisfy $$\int f(t)d\nu(t)\neq 0.$$
Moreover, the space of all bounded linear functionals on $L^1(|\nu|)$ which vanish on $\BB_E$ is one-dimensional.
\end{lemma}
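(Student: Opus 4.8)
The plan is to run de~Branges' extreme point method. Throughout I assume $\nu\neq 0$ (the statement being vacuous otherwise), and I write $d\nu=h\,d|\nu|$ for the Radon--Nikodym density of $\nu$ against its total variation. Since $\supp\nu^+\subset A$ and $\supp\nu^-\subset B$ are disjoint, $h$ is a real $|\nu|$-measurable function with $|h|=1$ a.e., equal to $1$ on a carrier of $\nu^+$ and to $-1$ on a carrier of $\nu^-$. First I would record that $\|\nu\|_W=1$: if $\|\nu\|_W<1$, then for small $\delta>0$ both $(1+\delta)\nu$ and $(1-\delta)\nu$ lie in $\MM_E^W(A,B)$, and $\nu=\tfrac12(1+\delta)\nu+\tfrac12(1-\delta)\nu$ exhibits $\nu$ as a proper convex combination of two distinct points, contradicting extremality. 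I will prove the ``moreover'' statement first and then deduce the first assertion from it.

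Let $V=\{g\in L^\infty(|\nu|):\int gF\,d|\nu|=0\text{ for all }F\in\BB_E\}$ be the annihilator of $\BB_E$ inside $(L^1(|\nu|))^*=L^\infty(|\nu|)$; note $h\in V$ because $\int hF\,d|\nu|=\int F\,d\nu=0$. Since de~Branges spaces are closed under $F\mapsto F^\#$ and $\overline{F(t)}=F^\#(t)$ on $\R$, conjugation carries $V$ into itself, so it suffices to study the real subspace $V_\R$. Consider the linear functional $\Lambda(g)=\int Wgh\,d|\nu|$ on $V_\R$, which is finite because $\int W\,d|\nu|=\|\nu\|_W<\infty$. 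The crux is to show $\ker\Lambda=\{0\}$. Suppose $g\in V_\R$, $\Lambda(g)=0$, and $g\neq 0$ in $L^\infty(|\nu|)$; pick $\epsilon>0$ with $\epsilon\|g\|_\infty<1$ and set $d\nu_\pm=(h\pm\epsilon g)\,d|\nu|$. Each $\nu_\pm$ is real and annihilates $\BB_E$ (since both $h$ and $g$ do); as $\epsilon\|g\|_\infty<1$ the pointwise sign of $h\pm\epsilon g$ equals that of $h$, so $d\nu_\pm^{\,+}=(1\pm\epsilon g)\,d\nu^+$ and $d\nu_\pm^{\,-}=(1\mp\epsilon g)\,d\nu^-$ are absolutely continuous with respect to $\nu^+$ and $\nu^-$, giving $\supp\nu_\pm^{\,+}\subset\supp\nu^+\subset A$ and $\supp\nu_\pm^{\,-}\subset\supp\nu^-\subset B$; and, using $h^2=1$, $\|\nu_\pm\|_W=\int W(1\pm\epsilon gh)\,d|\nu|=\|\nu\|_W\pm\epsilon\Lambda(g)=1$. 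Hence $\nu_\pm\in\MM_E^W(A,B)$, they are distinct because $g\neq 0$, and $\nu=\tfrac12\nu_++\tfrac12\nu_-$, contradicting extremality. Thus $\ker\Lambda=\{0\}$ and $\dim V_\R\leq 1$; since $h\in V_\R$ and $\Lambda(h)=\int W\,d|\nu|=\|\nu\|_W=1\neq 0$, in fact $V_\R=\R h$, so $V=\C h$ is one-dimensional, which is the ``moreover'' assertion.

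For the first assertion, let $f\in L^1(|\nu|)$ lie outside the $L^1(|\nu|)$-closure of $\BB_E$. By Hahn--Banach there is $\phi\in(L^1(|\nu|))^*$ with $\phi|_{\BB_E}=0$ and $\phi(f)\neq 0$; by the previous paragraph $\phi$ is a nonzero scalar multiple of the functional $u\mapsto\int uh\,d|\nu|=\int u\,d\nu$, and therefore $\int f\,d\nu\neq 0$.

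I expect the only real point of care to be the verification that the perturbed measures $\nu_\pm$ genuinely remain in $\MM_E^W(A,B)$: the support conditions come for free from absolute continuity, but the constraint $\|\nu_\pm\|_W\leq 1$ is active (because $\|\nu\|_W=1$ for an extreme point), and it is precisely this that forces the perturbation direction $g$ to lie in $\ker\Lambda$ rather than in all of $V_\R$. The reduction from $V$ to $V_\R$ via (E1) and the concluding Hahn--Banach step are routine.
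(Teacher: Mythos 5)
Your proof is correct, and it is the same underlying method as the paper's (de~Branges' extreme point perturbation: exhibit $\nu$ as a proper convex combination inside $\MM^W_E(A,B)$ using a density that annihilates $\BB_E$). The organization is reversed, though, and your implementation of the splitting is genuinely different in one respect worth noting. The paper proves the first assertion directly: it takes the annihilating density $h$, normalizes $h/b$ to lie in $[0,1]$ by adding a multiple of $b$ and rescaling, and splits $\nu=\lambda\nu_1+(1-\lambda)\nu_2$ with $\nu_1=(h/b)\,d\nu$; the one-dimensionality then falls out at the end. You instead prove the one-dimensionality first via the symmetric perturbation $d\nu_\pm=(h\pm\epsilon g)\,d|\nu|$, and recover the first assertion by Hahn--Banach. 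The payoff of your version is that the constraint $\|\cdot\|_W\le 1$, which is active since a nonzero extreme point has $\|\nu\|_W=1$, is handled honestly: you perturb only in directions $g$ with $\Lambda(g)=\int Wgh\,d|\nu|=0$, which keeps $\|\nu_\pm\|_W=1$, and the trivial-kernel argument on $V_\R$ then gives $\dim V_\R\le 1$. In the paper's one-sided splitting the second measure $\nu_2=\frac{1-\lambda h/b}{1-\lambda}\,d\nu$ has coefficient at least $1$ pointwise, so its $W$-norm is at least $\|\nu\|_W$, and the membership $\nu_2\in\MM^W_E(A,B)$ is asserted without comment; your two-sided perturbation sidesteps this. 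So: correct, same circle of ideas, but a cleaner and slightly more careful execution of the convex-splitting step.
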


\begin{proof} Let $f\in L^1(|\nu|)$ be a non-zero function which is not in the $L^1(|\nu|)$-closure of $\BB_E$. Then there exists a bounded linear functional which is zero on $\BB_E$ and non-zero at $f$. Each such functional $L$ on $L^1(|\nu|)$ can be represented as $L(f)=\int f(t)h(t)d|\nu|(t)$ for some $h\in L^{\infty}(|\nu|)$. Therefore, we have $\int F(t)h(t)d|\nu|(t)=0,$ for every $F\in \BB_E$ and  $\int f(t)h(t)d|\nu|(t)\neq 0$. Let $d\nu(t)=b(t)d|\nu|(t)$ be the polar decomposition of $\nu$. Using the fact that $F\in \BB_E$ implies $F^{\#}\in\BB_E$ we can assume with no loss in generality that $h/b$ is real valued. By adding a constant if necessary, we may assume that $h/b$ is a nonnegative function. Multiplying
 by a constant if necessary, we can assume that $ h/b$ is bounded by $1$ on the real line. Choose $\l\in (0,1)$ and put  $\nu_1= hd|\nu|$, $\nu_2=(\nu-\l hd|\nu|)/(1-\l)$. Both $\nu_1$ and $\nu_2$ are members of $\MM^W_E(A,B)$ and $\nu=\l \nu_1+(1-\l)\nu_2$. Since $\nu$ is an extreme point and clearly $\nu_1\neq 0$ we must have that $\nu_2=0$. Therefore, $\nu=\l h |\nu|$ and hence $$\int f(t)d\nu(t)\neq 0.$$
For the second part, notice that  $\nu=\l h |\nu|$ implies $\l h=b$ and therefore $L(f)=\l\int f(t)d\nu(t)$.
\end{proof}

\begin{lemma}\label{close} Let  $\BB_E$ be a regular de~Branges space such that $\BB_E\subset \CC_a$ and let $\mu$ be a finite positive measure
such that $\BB_E$ is not dense in $L^1(\mu)$. Then for every function $f\in L^1(\mu)$ which is in the $L^1(\mu)$-closure of $\BB_E$ there exists an entire function $F$ in the Cartwright class $\CC_a$ such that $F=f$, $\mu$-a.e.
\end{lemma}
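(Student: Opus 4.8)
The plan is to run, in the $L^1(\mu)$ setting, the argument used to prove Corollary~\ref{Riesz1}. First, since $\BB_E$ is not dense in $L^1(\mu)$, Hahn--Banach produces a nonzero $h\in L^\infty(\mu)$ with $\int Fh\,d\mu=0$ for all $F\in\BB_E$; replacing $h$ by $\Re h$ or $\Im h$ and using that $F^\#\in\BB_E$ (property (E1)), we may take $h$ real. Put $d\sigma=h\,d\mu$, a nonzero finite real measure with $|\sigma|\le\|h\|_\infty\,\mu$ which annihilates $\BB_E$, so $\BB_E\subset L^1(|\sigma|)$. Property (E2) gives $(F(\cdot)-F(w))/(\cdot-w)\in\BB_E$ for $F\in\BB_E$, $w\in\C$, and integrating against $\sigma$ yields
$$F(w)\,G(w)=\int\frac{F(t)}{t-w}\,d\sigma(t),\qquad G(w):=\int\frac{d\sigma(t)}{t-w},\quad w\in\C\setminus\R.$$
One checks $G\not\equiv0$ on $\C_+$: if it were, then $\int\frac{F(t)}{t-w}\,d\sigma(t)\equiv0$ on $\C_+$ for every $F\in\BB_E$, and applying this to $F^\#$ and conjugating (using that $\sigma$ is real) gives the same on $\C_-$; so the finite measure $F\sigma$ has vanishing Cauchy transform off $\R$, hence $F\sigma=0$ for all $F\in\BB_E$. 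Iterating (E2) shows $\BB_E$ has no common real zero (a common zero $t_0$ would force every $F\in\BB_E$ to be divisible by $(z-t_0)^n$ for all $n$), so for each real $t_0$ some $F\in\BB_E$ is nonzero near $t_0$, whence $|\sigma|=0$ near $t_0$; thus $\sigma=0$, a contradiction.

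Next set $m(w)=\sup\{|F(w)|:F\in\BB_E,\ \|F\|_{L^1(\mu)}\le1\}$. For real $x$ the identity above gives $|F(x+i)|\,|G(x+i)|\le\int|F|\,d|\sigma|\le\|h\|_\infty$, so $m(x+i)\le\|h\|_\infty/|G(x+i)|$. The function $z\mapsto G(z+i)$ is analytic and bounded in $\overline{\C_+}$ and, by the previous paragraph, not identically zero, so $\log|G(\cdot+i)|\in L^1(\Pi)$ and hence $\int\log^+m(x+i)\,d\Pi(x)<\infty$. Since every $F\in\BB_E\subset\CC_a$ has $F(\cdot+i)\in\CC_a$, evaluating the Cartwright estimate for $F(\cdot+i)$ at the point $x-i$ gives $\log|F(x)|\le a+\frac1\pi\int\frac{\log^+|F(t+i)|}{(t-x)^2+1}\,dt$; taking the supremum over admissible $F$ and integrating against $\Pi$, exactly as in Corollary~\ref{Riesz1}, yields $\int\log^+m(x)\,d\Pi(x)<\infty$. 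Feeding this back into the Cartwright estimate for $F\in\CC_a$ and using $\sup_t|(t-i)/(t-z)|\le(1+|z|)/|\Im z|$ gives
$$\log m(z)\le a|\Im z|+\frac{C(1+|z|)^2}{|\Im z|},\qquad z\notin\R,\qquad C=\tfrac1\pi\int\log^+m\,d\Pi,$$
so $m$ is bounded on compact subsets of $\C\setminus\R$; a standard application of Levinson's $\log\log$ theorem then shows $m$ is bounded on all compact subsets of $\C$.

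Finally, let $F_n\in\BB_E$ with $F_n\to f$ in $L^1(\mu)$. Since $|F_n(z)-F_k(z)|\le m(z)\,\|F_n-F_k\|_{L^1(\mu)}$ and $m$ is locally bounded, $\{F_n\}$ is uniformly Cauchy on compacta and converges to an entire function $F$; passing to a $\mu$-a.e.\ convergent subsequence, $F=f$ $\mu$-a.e. Letting $n\to\infty$ in $\log|F_n(z)|\le a|\Im z|+\frac{|\Im z|}\pi\int\frac{\log^+m(t)}{|t-z|^2}\,dt$ shows $F$ satisfies the same Cartwright bound, which (since $\int\log^+m\,d\Pi<\infty$) forces $F\in\CC_a$, just as in Corollary~\ref{Riesz1}. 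The main obstacle is the control of $m$ near the real axis: the essential points are the non-vanishing of the bounded analytic function $G(\cdot+i)$ (which relies on the no-common-real-zeros property coming from (E2)) and the transference of the integral bound from the line $\R+i$ to $\R$; once $\int\log^+m\,d\Pi<\infty$ is secured, Levinson's theorem and the passage to the limit are routine.
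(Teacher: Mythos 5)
Your proposal is correct and follows essentially the same route as the paper, which itself reduces Lemma~\ref{close} to the machinery of Theorem~\ref{maj}: Hahn--Banach gives an annihilating measure $\sigma=h\mu$, the difference-quotient property yields $m(x+i)\lesssim 1/|G(x+i)|$, Poisson summability of $\log^+ m$ is transferred from $\R+i$ to $\R$, local boundedness of $m$ follows via Levinson's $\log\log$ theorem, and uniform convergence on compacta produces the entire representative $F\in\CC_a$. The only differences are cosmetic (you normalize the majorant in $L^1(\mu)$ rather than $L^1(|\sigma|)$, and you spell out the non-vanishing of $G$, which the paper leaves implicit).
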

\begin{proof} The proof is very similar to the proof of Theorem~\ref{maj}. Since  $\BB_E$ is not dense in $L^1(\mu)$, there exists
$h\in L^\infty(\mu)$ such that $\int hf\mu=0$ for all $f$ in the $L^1(\mu)$-closure of $\BB_E$. Put $\nu=h\mu$.
Let $$m(w) = \sup \{ |F(w)| : F \in \BB_E \text{ and } \|F\|_{L^1(|\nu|)} \leq 1 \}.$$ Using the fact that $\nu$ annihilates $\BB_E$ it follows that $$\int{\frac{F(t)-F(z)}{t-z}}d\nu(t)=0,$$ for all $F\in \BB_E$ and non-real $z$.  Then clearly $$m(z)\abs{\int \frac{d\nu(t)}{t-z}}\leq \frac{C}{|\Im{z}|}.$$ Exactly as in the proof of Theorem~\ref{maj} this implies that $m(t)$ is Poisson summable, that $m(w)$ is locally bounded, and that $$\log m(w)\leq a|\Im w|+\frac{|\Im w|}{\pi}\int\frac{\log^+m(t)}{|t-w|^2}dt,$$ for all non-real $w$.  Let now $\{F_n(z)\}$ be a sequence of functions in  $\BB_E$ which converges in $L^1(\mu)$ to some limit $f\in L^1(|\nu|)$. Then $\{F_n\}$ being Cauchy and the inequality  $$|F_k(z)-F_l(z)|\leq m(z)\|F_k-F_l\|_{L^1(|\nu|)},$$ together with the fact that $m(z)$ is locally bounded imply that $\{F_n\}$ converges uniformly on compact sets to some entire function $F$. On the other hand, $F_n$ converge to $f$ in $L^1(\mu)$. Therefore, $F$ agrees with $f$, $\mu$-a.e. Finally, it is then easy to see that $F\in \CC_a$.
\end{proof}



\begin{lemma}\label{Krein} Let $\BB_E\subset \CC_a$ be a regular de~Branges space. If $\nu\in\MM^W_E(A,B)$ is a non-zero extreme point then it is supported
on a discrete sequence and its Cauchy integral $K\nu:\C\to\hat\C$ is a non-vanishing function.
 Moreover, the supports of $\nup$ and $\num$ are interlaced, i.e., for any two points $a,b$ in the support of $\nup$ there exists a point $c$ from the support of $\num$ such that $a<c<b$ and vice versa.
\end{lemma}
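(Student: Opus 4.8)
The plan is to exploit the one-dimensionality of the annihilator of $\BB_E$ in $L^1(|\nu|)$ established in Lemma~\ref{extremel}, combined with the Cartwright-class regularity from Lemma~\ref{close}. First I would record the structural consequence of Lemma~\ref{extremel}: since the space of bounded functionals on $L^1(|\nu|)$ vanishing on $\BB_E$ is one-dimensional, $L^1(|\nu|)$-closure of $\BB_E$ has codimension one, so $L^1(|\nu|) = \overline{\BB_E} \oplus \C\cdot g$ for a single function $g$; in particular for every fixed $z_0\notin\R$ the function $t\mapsto 1/(t-z_0)$ lies in $L^1(|\nu|)$ and decomposes accordingly. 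The key identity is that $\nu$ annihilates $(F(t)-F(z))/(t-z)$ for all $F\in\BB_E$, hence — writing $G(z)=K\nu(z)=\int (t-z)^{-1}d\nu(t)$ — one gets $F(z)G(z) = \int F(t)(t-z)^{-1}d\nu(t)$, exactly as in the proof of Theorem~\ref{maj}. If $G(z_0)=0$ at some non-real $z_0$, this says the functional $F\mapsto \int F(t)(t-z_0)^{-1}d\nu(t)$ vanishes on $\BB_E$; but by Lemma~\ref{close} the map $F\mapsto \int F\,h\,(\cdot-z_0)^{-1}d|\nu|$ together with property (E2) shows $(t-z_0)^{-1}\notin \overline{\BB_E}$ in $L^1(|\nu|)$ (otherwise, approximating and using that $\BB_E$ is regular, one produces a contradiction with $G(z_0)=0$), and then Lemma~\ref{extremel} forces $\int (t-z_0)^{-1}d\nu(t)\neq 0$, i.e. $G(z_0)\neq 0$. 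So $K\nu$ has no zeros in $\C\setminus\R$; real zeros are excluded because a real zero of the Cauchy transform of a point-mass-carrying measure would require a delicate cancellation ruled out once we know the support is discrete, or can be handled by the same functional argument applied to a suitable real test function built from (E2).

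Next I would establish discreteness of $\supp\nu$. Here I use that by Lemma~\ref{close} every element of $\overline{\BB_E}^{\,L^1(|\nu|)}$ agrees $|\nu|$-a.e. with an entire function in $\CC_a$, and that $\overline{\BB_E}$ has codimension one. Pick any point $\lambda\in\supp\nu$; by the extreme-point structure one can separate $\lambda$: the indicator-type function or rather the Clark-type reproducing object shows that point masses of $\nu$ correspond to zeros of $G=K\nu$ having nonzero residue, and since $G$ is (away from $\R$) a non-vanishing analytic function that extends meromorphically across $\R$ with poles exactly at the atoms of $\nu$, the atoms form a discrete set. More concretely: on $\C_+$ the function $G$ is of bounded type (being a ratio $F(z)/(\text{something in }\BB_E)$-type expression once we use $F(z)G(z)=\int F(t)(t-z)^{-1}d\nu(t)$ with a fixed nonzero $F\in\BB_E$, so $G = [\int F(t)(t-z)^{-1}d\nu(t)]/F(z)$ is a quotient of Cartwright-type functions), hence its boundary behavior forces $\nu$ to be purely atomic with atoms accumulating only at infinity; combined with $W$-finiteness this gives a discrete support. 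I expect this to be the step requiring the most care, since one must correctly identify $G$ as a quotient of entire functions of finite exponential type (using a nonzero $F\in\BB_E$, which exists as $\BB_E\neq\{0\}$) and then invoke the standard fact that such a $G$ cannot have a non-discrete singular part.

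Finally, for the interlacing of $\supp\nup$ and $\supp\num$, I would argue by contradiction: suppose $a<b$ are consecutive atoms of $\nup$ with no atom of $\num$ between them (the other case being symmetric). Since $\BB_E$ is regular, property (E2) lets us divide: if $F\in\BB_E$ vanishes at all atoms of $\nu$ in, say, $(-\infty,a]\cup[b,\infty)$ except we keep one sign pattern, then $\int F\,d\nu$ can be made to have a definite sign, contradicting $\nu\perp\BB_E$ — unless the signs of $\nu$ alternate between consecutive atoms. The cleanest route: the Cauchy transform $G=K\nu$ is real on $\R\setminus\supp\nu$ (because $\nu$ is real) and at an atom $\lambda$ of $\nu$ it has a simple pole with residue $-\nu(\{\lambda\})$; since $G$ never vanishes, $G$ cannot change sign on any bounded component of $\R\setminus\supp\nu$, so consecutive atoms must have opposite-sign residues, which is exactly the statement that atoms of $\nup$ and $\num$ interlace. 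I would write this out using the standard interlacing-of-zeros-and-poles lemma for real meromorphic functions of bounded type, which applies since we have already shown $G$ is such a function with no zeros in $\C\setminus\R$.
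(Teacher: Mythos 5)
Your interlacing argument is essentially the paper's own (the paper phrases it with $G=1/K\nu$, whose simple real zeros at the atoms satisfy $G'(c)=1/\nu(c)$, so consecutive zeros force $\nu(a)\nu(b)<0$; your version with the poles and residues of $K\nu$ is the same computation). The non-vanishing of $K\nu$ is something the paper does not reprove but delegates to de~Branges' Theorem~66 and Lemma~4 of \cite{PolType}; your sketch for it is incomplete as written, since the claim that $(t-z_0)^{-1}$ is not in the $L^1(|\nu|)$-closure of $\BB_E$ is asserted with a circular parenthetical rather than proved (the clean route is the \emph{second} part of Lemma~\ref{extremel}: if $K\nu(z_0)=0$ then $F\mapsto\int F(t)(t-z_0)^{-1}d\nu(t)$ is a bounded functional annihilating $\BB_E$, hence proportional to $F\mapsto\int F\,d\nu$, which forces $(t-z_0)^{-1}$ to be constant on $\supp|\nu|$ --- impossible once the support has more than one point).

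The genuine gap is the discreteness step. You propose to identify $K\nu$ as a quotient of entire functions of exponential type via $K\nu=K(F\nu)/F$ and then conclude from bounded type plus $W$-finiteness that $\nu$ is purely atomic with discrete support. This does not work: $K(F\nu)$ is analytic only off $\supp\nu$ and there is no a priori reason it extends to an entire function, so $K\nu$ is not exhibited as such a quotient; and in any case a Cauchy transform of bounded type in $\C_+$ does not force the underlying measure to be atomic (the Cauchy transform of Lebesgue measure on $[0,1]$ is of bounded type), nor does $W$-finiteness exclude a continuous part. Your argument is also circular in its ordering, since your treatment of real zeros of $K\nu$ and of the residues already presupposes the discreteness you are trying to establish. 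The paper's route avoids all of this: for any finite interval $(a,b)$ meeting $\supp\nu$ in at least two points, choose $f\in L^1(|\nu|)$ supported in $(a,b)$ with $\int|f|\,d|\nu|=1$ and $\int f\,d\nu=0$; by (the contrapositive of the first part of) Lemma~\ref{extremel}, $f$ lies in the $L^1(|\nu|)$-closure of $\BB_E$, so by Lemma~\ref{close} it agrees $|\nu|$-a.e.\ with a nonzero entire $F\in\CC_a$, which must vanish on all of $\supp\nu\setminus(a,b)$; hence that set has no finite accumulation point, and repeating with a disjoint interval makes the whole support discrete. You should replace your bounded-type argument with this (or an equivalent) use of Lemmas~\ref{extremel} and~\ref{close}, and establish discreteness \emph{before} the statements about zeros, poles and interlacing.
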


\begin{proof}
Since $\BB_E$ is a regular space, it has no common zeros and the support of $\nu$ contains more than one point. Let $(a,b)$ be any finite interval that contains at least two points of the support of $\nu$. There exists a function $f\in L^1(|\nu|)$ which vanishes outside of $(a,b)$ such that $\int |f(t)|d|\nu|(t)=1$ and $\int f(t)d\nu(t)=0$. By Lemma~\ref{extremel} and Lemma~\ref{close}, there exists an entire function $F\in \CC_a$ such that $F=f$ a.e. $|\nu|$. It follows that the support of $\nu$ outside of $(a,b)$ must be contained in the zeros of $F$. Since $F$ is non zero, the support of $\nu$ outside of $(a,b)$ cannot have a finite accumulation point. Finally, choosing a new interval $(a,b)$ to be disjoint from the old one and repeating the argument we can show the whole support of $\nu$ must be a discrete set.

The statement that  $K\nu$ has no zeros is now equivalent to the statement that
$G=\frac{1}{K\nu}$ is an entire function, see the proof of Theorem~66 in~\cite{dBr}.
(See also Lemma 4 in \cite{PolType}.)
To prove the interlacing of $\supp\nu_\pm$ notice that the function $G(z)$ must vanish at every point $c$ in the support of $\nu$, and for all such $c$,  $G'(c)=1/\nu(c)$. Since $G$ is a real analytic function on $\R$ with simple zeros, for any two of its consecutive zeros $a$ and $b$ we must have $G'(a)G'(b)<0$. Therefore, $\nu(a)\nu(b)<0$ for any two consecutive points in the support of $\nu$.
\end{proof}

Let $\Th=E^\#/E$ be the inner function that corresponds to the de~Branges function $E$. If $\nu$ annihilates $\BB_E$, then $E\nu$ annihilates $K_\Th$ and hence the Cauchy integral of $\bar{E}\nu$  is divisible by $\Th$,
see \cite{Aleksandrov} or \cite{PolGap}.
In particular, if $\nu\in\MM^W_E(A,B)$ is an extreme point, then, by Lemma~\ref{extremel}, the only (up to a constant) bounded linear functional on $L^1(|\nu|)$ which annihilates the model space $\KK_{\Th}$ is given by  $L(f)=\int f(t)E(t)d\nu(t).$ Therefore, there is no non-constant bounded function $h$ for which the Cauchy integral $K(h\bar{E}\nu)$ is divisible by $\Th$. We will use this observation in the proof of our next result.

For a discrete sequence $\L=\{\lan\}$ we denote by $n_\L$ its counting function, that is defined to be $0$ at $0$, is constant on each
interval $(\lan,\l_{n+1}]$ and jumps up by one at each $\lan$.

\begin{theorem}\label{poissonsummable} Let $\BB_E$ be a regular de~Branges space and let $\phi(t)$ be the corresponding phase function.
Let $W$ be an $E$-weight and let $\mu$ be a $W$-finite measure. Suppose that there
 exists another $W$-finite measure $\nu$ such that
 $$\int Fd\mu=\int Fd\nu$$
 for all $F\in \BB(E)$. Then the support of $\mu$ contains a sequence $\L$ whose counting function satisfies $2\pi n_{\L}(t)-\phi(t)=\tilde{h}(t)$ for some $h\in L^1(\Pi)$.
\end{theorem}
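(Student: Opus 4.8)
The plan is to reduce to the extreme-point analysis already developed in this section. Set $\sigma=\mu-\nu$; this is a nonzero real $W$-finite measure annihilating $\BB_E$. Its positive and negative parts are supported on two disjoint closed sets $A=\supp\sigma^+$ and $B=\supp\sigma^-$ (after discarding a common null set, or using the canonical Jordan decomposition), so $\sigma/\|\sigma\|_W\in\MM_E^W(A,B)$, a nonempty set. By Lemma~\ref{KrMil} this convex weak-* compact set has a nonzero extreme point $\nu_0$, and since $\supp\nu_0^+\subset A\subset\supp\mu$ and $\supp\nu_0^-\subset B$, while $A$ and $B$ are contained in $\supp\mu$, the support of $\nu_0$ is contained in $\supp\mu$. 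It therefore suffices to prove the counting-function statement for an arbitrary nonzero extreme point $\nu_0$ of $\MM_E^W(A,B)$, with $\L=\supp\nu_0$.

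Next I would invoke Lemma~\ref{Krein}: $\nu_0$ is supported on a discrete sequence $\L=\{\lambda_n\}$ (enumerated increasingly), the Cauchy integral $K\nu_0$ is non-vanishing, so $G=1/K\nu_0$ is entire, and the points of $\L$ interlace between $\supp\nu_0^+$ and $\supp\nu_0^-$, with $G(\lambda_n)=0$, $G'(\lambda_n)=1/\nu_0(\lambda_n)$. The key analytic input is the remark preceding the theorem: writing $\Th=E^\#/E$ and using that $\nu_0$ annihilates $\BB_E$, the Cauchy integral $K(\bar E\nu_0)$ is divisible by $\Th$, and—because $\nu_0$ is extreme—the quotient $K(\bar E\nu_0)/\Th$ has no further ``room'', i.e. there is no non-constant bounded $h$ with $K(h\bar E\nu_0)$ divisible by $\Th$. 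Concretely this forces $K(\bar E\nu_0)=c\,\Th\cdot(\text{outer})$, so that $E\cdot K(\bar E\nu_0)$ (equivalently $|E|^2 K\nu_0$ on $\R$, up to the usual Clark-theoretic identities) has a precisely controlled argument: the argument of $1/(K\nu_0)$ on $\R$ differs from $\arg\Th=2\phi$ (mod the argument of an outer function, which is the harmonic conjugate of an $L^1(\Pi)$ function) by a bounded-variation-free term. The upshot is that $\arg G(t) = \phi(t) + \tilde h(t)$ for some $h\in L^1(\Pi)$ on $\R\setminus\L$.

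Finally I would translate the argument of $G$ into its zero-counting function. Since $G$ is real-entire with only the real simple zeros $\{\lambda_n\}$ and with $G'(\lambda_n)\ne 0$, the continuous argument of $G$ along $\R$ increases by exactly $\pi$ across each $\lambda_n$; hence $\arg G(t)$ and $\pi n_\L(t)$ differ by a function that is continuous and of locally bounded behaviour away from the jumps—more precisely, $\arg G(t)-\pi n_\L(t)$ extends to a function whose only growth comes from the term already identified as $\tilde h/2$. Comparing $2\arg G(t)=\theta_E(t)+\tilde h_1(t)=2\phi(t)+\tilde h_1(t)$ with $2\pi n_\L(t)$ and absorbing the bounded/continuous discrepancy into the conjugate function, one gets $2\pi n_\L(t)-\phi(t)=\tilde h(t)$ for some $h\in L^1(\Pi)$, as claimed.

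I expect the main obstacle to be the second paragraph: making rigorous the passage from ``$\nu_0$ extreme'' to the statement that $K(\bar E\nu_0)/\Th$ is outer (no inner factor, no singular part at $\infty$), and then extracting from this an $L^1(\Pi)$ control on $\arg(1/K\nu_0)-\arg\Th$ on the line. This requires the Clark-measure machinery from Section~\ref{prelim} together with the divisibility results cited from \cite{Aleksandrov, PolGap}, and care that the passage between $K\nu_0$, $K(\bar E\nu_0)$, and the boundary values $|E(t)|^2$ introduces only $\log|E|\in L^1(\Pi)$-type errors (which it does, since $E$ is Hermite–Biehler of exponential type). The counting-function bookkeeping in the third paragraph is then routine: a monotone phase with unit jumps has a counting function equal to its argument up to a bounded oscillation, and that oscillation is harmless against the $L^1(\Pi)$ weight.
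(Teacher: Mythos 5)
Your overall strategy --- pass to $\sigma=\mu-\nu$, extract a nonzero extreme point of $\MM_E^W(A,B)$, and convert its extremality into an outer-function representation of the phase --- is the paper's strategy, but there are two genuine problems. The first is a concrete error: you take $\L=\supp\nu_0$, the \emph{whole} support of the extreme measure, and justify $\L\subset\supp\mu$ by asserting that both $A=\supp\sigma^+$ and $B=\supp\sigma^-$ lie in $\supp\mu$. That is false: $\sigma^+\leq\mu$ gives $A\subset\supp\mu$, but $\sigma^-\leq\nu$ only gives $B\subset\supp\nu$, so $\supp\nu_0^-$ need not meet $\supp\mu$ at all. The same choice also breaks your normalization: with $\L$ the full (interlaced) support, your own bookkeeping in the third paragraph yields $2\pi n_\L-2\phi\in\tilde{L^1}(\Pi)$, i.e.\ $\pi n_\L(t)-\phi(t)=\tilde{h}(t)$, not the claimed $2\pi n_\L(t)-\phi(t)=\tilde{h}(t)$. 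The paper instead takes $\L=\supp\eta_+\subset A\subset\supp\mu$ (only the positive part of the extreme measure $\eta$); the interlacing from Lemma~\ref{Krein} then gives $\abs{4\pi n_\L(t)-\psi(t)}\leq 4\pi$, where $\psi$ is the increasing argument of the inner function $\Psi$ corresponding to the Clark measure $\abs{E}|\eta|$, and combining this with $\psi-2\phi\in\tilde{L^1}(\Pi)$ produces both the correct constant and the containment in $\supp\mu$.

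Second, the step you yourself flag as ``the main obstacle'' is the heart of the proof and is left unproved. The paper's mechanism is: set $g=(1-\Psi)K(\bar E\eta)\in\KK_\Psi$; divisibility of $K(\bar E\eta)$ by $\Th$ gives $g=\Th h$ with $h\in H^2\cap H^\infty$; if $h$ had a non-constant inner factor $\Phi$, say $h=\Phi k$, then $\Th(1+\Phi)k\in\KK_\Psi$ and Clark's formula shows that $K\bigl((1+\Phi)\bar{\Phi}\,\bar E\eta\bigr)$ is again divisible by $\Th$, producing a second, non-constant bounded annihilator in $L^\infty(|\eta|)$ and contradicting the one-dimensionality statement of Lemma~\ref{extremel}. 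Hence $h$ is outer, and the elementary identity $\Psi\bar\Th\bar h=h$ gives $e^{i(\psi-2\phi)}=\bar h/h$, so $\psi-2\phi\in\tilde{L^1}(\Pi)$ because $\log\abs{h}\in L^1(\Pi)$. Your sketch via $G=1/K\nu_0$ and ``$K(\bar E\nu_0)/\Th$ is outer'' points at the right target, but without this (or an equivalent) argument the second paragraph is a statement of intent rather than a proof, and the passage from outerness to an $L^1(\Pi)$ control of the argument is exactly where the Clark-measure identity $\bar E\eta=g\abs{E}|\eta|$ is needed.
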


\begin{proof}  Notice that $\sigma=\mu-\nu$ is a finite signed measure which annihilates $\BB_E$ and (after scaling) $\sigma\in\MM_E^W(A,B)$ for $A:=\supp\sigma^+, B=\supp\sigma^-$. Notice that clearly $A\subset \supp\mu$ and $B\subset \supp\nu$. By Lemma~\ref{KrMil} there exists an extreme measure $\eta\in \MM^W_E(A,B)$ which has to be discrete by Lemma~\ref{Krein}.

Let $\Th=E^\#/E$ be the inner function that corresponds to the de~Branges function $E$. Then $\Th(t)=e^{2i\phi(t)}$. Let $\Psi$ be the inner function corresponding to the Clark measure $\abs{E}|\eta|$. The goal is to show that there exists an outer function $h\in \ker T_{\bar{\Psi}\Th}$ such that $\Psi\bar{\Th}\bar{h}=h$ is also outer.

Define $g=(1-\Psi)K(\bar{E}\nu)$. Then clearly $\bar{E}\eta=g|E||\eta|$ and $g\in \KK_{\Psi}$. Moreover, the fact that $\eta$ is real and annihilates $\BB_E$ implies that the Cauchy integral $K(\bar{E}\eta)$ is divisible by $\Th$. Therefore, $g=\Th h$ for some $h\in\HH^2\cap\HH^\infty$. Assume that $h$ is not outer, i. e., there exists an inner function $\Phi$ such that $h=\Phi k$ for some $k\in\HH^2\cap \HH^\infty$. Then $\Th(1+\Phi) k\in \KK_{\Psi}$ and hence, by Clark's formula $$\Th(1+\Phi) k=(1-\Psi)K(\Th(1+\Phi)k|E||\eta|).$$ So, the Cauchy integral of the measure $\Th(1+\Phi)k|E||\eta|$ is also divisible by $\Th$. However,
$$\Th(1+\Phi)k|E||\eta|=(1+\Phi)\frac g\Phi|E||\eta|=\frac{1+\Phi}{\Phi}\bar{E}\eta.$$ Thus, we obtained a non-constant bounded function $(1+\Phi)\bar{\Phi}$ such that the Cauchy integral of $(1+\Phi)\bar{\Phi}\bar{E}\eta$ is also divisible by $\Th$. This is in contradiction with the fact that $\eta$ is an extreme point, see Lemma \ref{extremel} and the comment before the statement of the theorem.  Therefore, $h$ is outer.

To show that $\Psi\bar{\Th}\bar{h}=h$ is simple. Just notice that $$\Psi\bar{\Th}\bar{h}=\Psi\bar{g}=(1-\Psi)K(\bar{\Th}\bar{E}\eta)=(1-\Psi)K(\bar{\Th}g|E||\eta|)=(1-\Psi)K(h|E||\eta|)=h.$$ Therefore, $$e^{i(\psi(t)-2\phi(t))}=\bar{\Psi}(t)\Th(t)=\frac{\bar{h(t)}}{h(t)}=\frac{e^{h(t)-i\tilde{h}(t)}}{e^{h(t)+i\tilde{h}(t)}}=e^{-2i\tilde{h}(t)},$$ where $\psi(t)$ denotes the increasing argument of the inner function $\Psi$. Thus, $\psi(t)-2\phi(t)\in \tilde{L^1}(\Pi)$.

Let $\Lambda:=\supp \eta_+\subset A$. Recall that $\supp\eta=\{\Psi=1\}$. By Lemma \ref{Krein}, the supports of $\eta_\pm$ are interlaced and hence $\abs{4\pi n_{\L}(t)-\psi(t)}\leq 4\pi$. So, we also have that $4\pi n_{\L}(t)-\psi(t)\in \tilde{L^1}(\Pi)$ and we get the desired conclusion $$2\pi n_{\L}(t)-\phi(t)\in \tilde{L^1}\left(\Pi\right).$$

\end{proof}

The last statement can also be formulated  as follows:

\begin{corollary}\label{poissonsummablecor} Let $\BB_E$ be a regular de~Branges space and  let $\phi(t)$ be the corresponding phase function. If $\mu$ is a  non-zero measure that annihilates $\BB_E$ then the support of $\mu_+$ ($\mu_-$) contains a sequence $\L$ whose counting function satisfies $2\pi n_{\L}(t)-\phi(t)=\tilde{h}(t)$ for some $h\in L^1(\Pi)$.
\end{corollary}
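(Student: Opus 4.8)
The plan is to deduce Corollary~\ref{poissonsummablecor} from Theorem~\ref{poissonsummable} by observing that an annihilating measure is already the difference of two mutually singular measures in the required form. Recall from the discussion preceding the corollary that if $\mu$ annihilates $\BB_E$ then the positive and negative parts $\mu_+$ and $\mu_-$ are $\BB_E$-indeterminate; in particular
$$\int F\,d\mu_+=\int F\,d\mu_-$$
for all $F\in\BB_E$, and both $\mu_+$ and $\mu_-$ are $W$-finite since $|\mu|=\mu_++\mu_-$ and $\mu$ is $W$-finite by the definition of ``annihilates $\BB_E$''. Thus the pair $(\mu_+,\mu_-)$ satisfies exactly the hypothesis of Theorem~\ref{poissonsummable} with the roles $\mu\rightsquigarrow\mu_+$ and $\nu\rightsquigarrow\mu_-$.

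First I would check that $\mu_+$ and $\mu_-$ are genuinely distinct as measures, which is needed so that Theorem~\ref{poissonsummable} produces a nontrivial conclusion: since $\mu\neq 0$ and $\mu_+\perp\mu_-$, we cannot have $\mu_+=\mu_-$, so the two measures differ. Then applying Theorem~\ref{poissonsummable} directly gives a sequence $\L\subset\supp\mu_+$ whose counting function satisfies $2\pi n_{\L}(t)-\phi(t)=\tilde h(t)$ for some $h\in L^1(\Pi)$. This is the assertion about $\mu_+$. For the statement about $\mu_-$, I would simply interchange the roles: apply Theorem~\ref{poissonsummable} with $\mu\rightsquigarrow\mu_-$ and $\nu\rightsquigarrow\mu_+$, which yields the corresponding sequence $\L\subset\supp\mu_-$ with the same phase-counting relation. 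Alternatively one can note that $-\mu$ also annihilates $\BB_E$ and has positive part $\mu_-$, so the $\mu_+$ statement applied to $-\mu$ gives the $\mu_-$ statement.

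The only genuine content here beyond the quotation of Theorem~\ref{poissonsummable} is the verification that $\supp\mu_+$ (respectively $\supp\mu_-$) plays the role of ``$\supp\mu$'' in the theorem. In the proof of Theorem~\ref{poissonsummable} one sets $\sigma=\mu-\nu$, $A=\supp\sigma^+$, $B=\supp\sigma^-$, and observes $A\subset\supp\mu$, $B\subset\supp\nu$. Taking $\mu\rightsquigarrow\mu_+$, $\nu\rightsquigarrow\mu_-$, we get $\sigma=\mu_+-\mu_-=\mu$ (the original measure!), so $A=\supp\mu_+$, $B=\supp\mu_-$ on the nose, and the produced sequence $\L$ lies in $A=\supp\mu_+$ as claimed. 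I do not anticipate any real obstacle: this is a pure restatement, the ``hard part'' having already been carried out in Theorem~\ref{poissonsummable} (namely the extreme point analysis of Lemmas~\ref{KrMil}--\ref{Krein} and the inner-function/Clark-measure manipulation producing the outer function $h$ with $\Psi\bar\Th\bar h=h$).
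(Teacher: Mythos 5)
Your proposal is correct and matches the paper's intent exactly: the paper presents Corollary~\ref{poissonsummablecor} as a direct reformulation of Theorem~\ref{poissonsummable}, obtained precisely by taking $\mu\rightsquigarrow\mu_+$, $\nu\rightsquigarrow\mu_-$ (and interchanging them, or passing to $-\mu$, for the $\mu_-$ statement). Your checks that $\mu_\pm$ are $W$-finite, that $\mu_+\neq\mu_-$ because $\mu\neq 0$ and $\mu_+\perp\mu_-$, and that $\sigma=\mu_+-\mu_-=\mu$ so the produced sequence lies in $\supp\mu_+$ are exactly the (routine) verifications the paper leaves implicit.
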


In the case $\BB_E=\PWa$ we have a more precise result in the form of Theorem \ref{indeterminate} that will be proved below.

\section{Measures annihilating $\PWa$.}

Note that in the case $E=S^a$ we have $W_E=\const$.
Since $F(x)=o(1)$ as $x\to\pm\infty$ for all $F\in\PWa$, $W=1$ is an $E$-weight that can be used in all statements from the last section.
In what follows we write $\MM_a(A,B)$ instead of $\MM^1_{PW_a}(A,B)$, i.e. $\MM_a(A,B)$ is the set of real finite measures $\mu$ of norm at most 1
that annihilate $\PWa$ and satisfy $\supp\mu_+\subset A,\supp\mu_-\subset B$.

\indeterminate*

\begin{proof} If $\mu$ is $a$-indeterminate, then there exists a finite positive measure $\nu\neq\mu$ such that $\mu-\nu$
annihilates $\PWa$. Put $A=\supp\mu$, $B=\supp\nu$ and consider $\MM_a(A,B)$. Let $\eta$ be a non-trivial extreme point of
$\MM_a(A,B)$. Let $\L=\{\lan\}=\supp \eta$.
Since $\eta$ is extreme, it is a zero set of an entire function from the Krein class and therefore $\L$ must be
a regular sequence with density $a/\pi$.
By  lemma \ref{Krein}, $\L'=\{\l_{2n}\}\subset \supp\mu$.

The main theorem of \cite{PolGap} implies that $\L$ contains
a $(\frac{a}{\pi}-\e)$-uniform subsequence for any $\e>0$. It follows that $\L'$ contains a $(\frac{a}{2\pi}-\e)$-uniform subsequence for any $\e>0$.
Using lemma~\ref{uff}, one can select a  $\frac{a}{2\pi}$-uniform subsequence of $\L'$.
 \end{proof}

The following result represents a refinement of de~Branges Theorem~66 in~\cite {dBr}.

\begin{theorem} Let $A$ and $B$ be disjoint closed subsets of $\R$. The necessary and sufficient condition for $ \MM_a(A,B)$ to contain a non-zero measure is that there exists a sequence $\Lambda=\{\lambda_n\}$, satisfying  $ \{\lambda_{2n}\}\subset A, \{\lambda_{2n+1}\}\subset B$, which is the zero set of an entire function in the Krein class $\KK_a$.
\end{theorem}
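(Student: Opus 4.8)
The plan is to prove the two implications separately; in each direction the bridge is the identification of an annihilating measure with the partial fraction expansion of $1/F$.

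\emph{Sufficiency.} Assume $F\in\KK_a$ has zero set $\L=\{\lambda_n\}$ with $\{\lambda_{2n}\}\subset A$ and $\{\lambda_{2n+1}\}\subset B$; replacing $F$ if necessary by the real canonical product over $\L$ (which, by Cartwright's theory, again lies in $\KK_a$), we may assume $F$ is real on $\R$. I would take $\mu=c\sum_n\frac{1}{F'(\lambda_n)}\,\delta_{\lambda_n}$, with $c>0$ chosen so small that $\|\mu\|=c\sum_n 1/|F'(\lambda_n)|\le 1$ (the series converges by the definition of $\KK_a$) and its sign chosen to fit the parity in the hypothesis. Because $F$ is real with simple real zeros listed in increasing order, $F'(\lambda_n)$ alternates in sign, hence $\supp\mu^+\subset\{\lambda_{2n}\}\subset A$ and $\supp\mu^-\subset\{\lambda_{2n+1}\}\subset B$. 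It remains to see $\mu$ annihilates $\PWa$. The Krein partial fraction identity gives $K\mu(z)=-c/F(z)$, which is of bounded type in each half-plane; from the defining properties of $\KK_a$ (Krein's theorem, which in particular gives $F\in\CC_a$) one reads off that $K\mu$ has mean type $\le -a$ in both $\C_+$ and $\C_-$, and by the Paley--Wiener theorem this is equivalent to $\hat\mu\equiv 0$ on $[-a,a]$. Hence $0\neq\mu\in\MM_a(A,B)$.

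\emph{Necessity.} Suppose $\MM_a(A,B)\ni\sigma\neq 0$. By Lemma~\ref{KrMil}, $\MM_a(A,B)$ is convex and weak-$*$ compact, so it has a non-zero extreme point $\eta$. By Lemma~\ref{Krein}, $\eta$ is carried by a discrete sequence $\L=\{\lambda_n\}$ (automatically infinite, since a nonzero finitely supported measure cannot have a spectral gap), its Cauchy integral $K\eta$ is nowhere zero on $\C$, and the supports of $\eta^+$ and $\eta^-$ interlace; combined with $\supp\eta^+\subset A$, $\supp\eta^-\subset B$, this is precisely the statement (after adjusting parity) that $\{\lambda_{2n}\}\subset A$ and $\{\lambda_{2n+1}\}\subset B$. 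Put $G=1/K\eta$: the non-vanishing of $K\eta$ makes $G$ entire, with zeros exactly $\L$, all simple, $|G'(\lambda_n)|=1/|\eta(\{\lambda_n\})|$, so $\sum_n 1/|G'(\lambda_n)|=\|\eta\|<\infty$; writing $K\eta(z)=\sum_n\eta(\{\lambda_n\})/(\lambda_n-z)$ and substituting gives $1/G(z)=\sum_n 1/((z-\lambda_n)G'(\lambda_n))$. Finally $G$ has exponential type $\le a$, so $G\in\KK_a$ with zero set $\L$, which is the sequence we sought.

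\emph{The main obstacle} is the control of exponential (mean) type. In the necessity direction one knows only that $K\eta$ has mean type $\le -a$ in each half-plane (because $\eta$ annihilates $\PWa$), and one must deduce that its reciprocal $G$ is still of type $\le a$ --- equivalently, that the extreme measure $\eta$ has spectral gap exactly $[-a,a]$, not a larger one. Dually, in the sufficiency direction one needs that $1/F$ has mean type exactly $-a$ and not merely $\le 0$. Both facts come from the classical estimates of Krein and de~Branges for entire functions whose reciprocal is a Cauchy integral, as used in the proof of Theorem~66 of~\cite{dBr} and Lemma~4 of~\cite{PolType}; the remaining ingredients (Krein--Milman, the interlacing of $\supp\eta^\pm$, and the partial fraction/summability bookkeeping) are already furnished by Lemmas~\ref{KrMil}--\ref{Krein} and the definition of the Krein class.
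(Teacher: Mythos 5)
Your proof follows essentially the same route as the paper's (which is only a few lines long): necessity by extracting a non-zero extreme point of $\MM_a(A,B)$ via Lemmas~\ref{KrMil} and~\ref{Krein} and setting $G=1/K\eta$, and sufficiency by reading the partial-fraction identity for $1/F$ as defining the annihilating measure $c\sum_n\delta_{\lambda_n}/F'(\lambda_n)$. You are in fact more explicit than the paper about the one genuinely delicate point --- controlling the exponential (mean) type of $G$, respectively of $1/F$, so that the spectral gap is exactly of size $2a$ --- which the paper dispatches with ``using the properties of $G(z)$'' and a pointer to Theorem~66 of de~Branges.
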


\begin{proof}  Assume that $\MM_a(A,B)$ contains a non-zero measure. It follows from Lemma~\ref{Krein} that  there exists an entire function $G(z)$ in the Krein class $\KK_a$ with zero set $\Lambda=\{\lambda_n\}$ satisfying  $ \{\lambda_{2n}\}\subset A, \{\lambda_{2n+1}\}\subset B$. Conversely, if such function exists define a signed discrete measure $\sigma$ supported on $\L$ simply by  $\sigma(\{\lambda_n\})=1/G'(\lambda_n)$. Using the properties of $G(z)$ it is easy to show that this measure (or perhaps $-\sigma$) belongs in $\MM_a(A,B)$.
\end{proof}

To prove our oscillation result below we will need the following technical lemma. It basically says that any subsequence of real zeros of a given function $F\in\PWa$ can be replaced by double zeros without changing the exponential type and still keeping the boundedness on the real line.

\begin{lemma} Let $F\in\PWa$ with infinitely many real zeros and let $\Lambda=\{\lan\}$ be a subsequence of the real zeros of $F$ indexed so that
$$\dots<\l_{-2}<\l_{-1}<0<\l_1<\l_2<\dots $$
Assume that $\sup_n \l_{n+1} -\l_n<\infty$.
Put $\gamma_n=(\l_{2n-1}+\l_{2n})/2$ for $n$ positive and $\gamma_n = (\l_{2n}+\l_{2n+1})/2$ for $n$ negative.

Then the entire function $G(z)$ defined by
$$G(z)=\frac{\prod_{\gamma\in\Gamma} \left(z-\gamma\right)^2}{\prod_{\l\in\L} \left(z-\l\right)}F(z)$$
is bounded on $\R$.

\end{lemma}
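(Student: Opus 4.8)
The plan is to factor $G=BF$, where $B$ is the meromorphic function
\[
B(z)=\frac{\prod_{\gamma\in\Gamma}(z-\gamma)^{2}}{\prod_{\lambda\in\Lambda}(z-\lambda)},
\]
interpreted — the numerator and denominator being separately divergent — as the product over the consecutive pairs of points of $\Lambda$ singled out in the statement (with $\gamma_n$ the midpoint of the $n$-th pair $[\lambda_{2n-1},\lambda_{2n}]$, and the obvious modification for $n<0$) of the factors $\frac{(z-\gamma_n)^{2}}{(z-\lambda_{2n-1})(z-\lambda_{2n})}$. Put $\delta:=\sup_n(\lambda_{n+1}-\lambda_n)<\infty$ and let $d_n$ be half the length of the $n$-th pair-interval, so $0<d_n\le\delta/2$. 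The identity $(z-\gamma_n)^{2}=(z-\lambda_{2n-1})(z-\lambda_{2n})+d_n^{2}$ gives
\[
B(z)=\prod_{n}\Bigl(1+\frac{d_n^{2}}{(z-\lambda_{2n-1})(z-\lambda_{2n})}\Bigr).
\]
Given the estimate of the next paragraph, this product converges locally uniformly off $\Lambda$, so $B$ is meromorphic with simple poles exactly on $\Lambda$ and double zeros exactly on $\Gamma$; since $F$ vanishes on $\Lambda$, the quotient $G=BF$ is entire. One checks in the standard way that $G$ is also of finite exponential type: it is $F$ times a constant times $e^{\beta z}$ (for some $\beta\in\C$) times the ratio of the canonical products over $\Gamma$, taken with multiplicity two, and over $\Lambda$, and these canonical products have finite exponential type because $\Lambda$, being a subset of the zero set of $F\in\CC_a$, has finite upper density, while $\Gamma$ with multiplicity two has the same counting function as $\Lambda$.

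The crux is the uniform bound
\[
K(\delta):=\sup_{\Im z=\pm1}\ \sum_{n}\frac{d_n^{2}}{|z-\lambda_{2n-1}|\,|z-\lambda_{2n}|}<\infty .
\]
To prove it, fix $z=x\pm i$ and split the pairs according to the dyadic size of $|x-\lambda_{2n}|$. For the pairs with $2^{j}\le|x-\lambda_{2n}|<2^{j+1}$ and $2^{j}\ge2\delta$, the intervals $[\lambda_{2n-1},\lambda_{2n}]$ are pairwise disjoint and all contained in a set of measure $O(2^{j})$, so $\sum d_n=\tfrac12\sum|[\lambda_{2n-1},\lambda_{2n}]|=O(2^{j})$ and hence $\sum d_n^{2}\le\tfrac{\delta}{2}\sum d_n=O(\delta\,2^{j})$; as $|z-\lambda_{2n-1}|\,|z-\lambda_{2n}|\ge|x-\lambda_{2n}|\bigl(|x-\lambda_{2n}|-\delta\bigr)\ge2^{2j-1}$ for these, the $j$-th block contributes $O(\delta\,2^{-j})$, and summation over $j$ gives $O(\delta)$. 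The pairs with $|x-\lambda_{2n}|<2\delta$ occupy only finitely many dyadic scales; their intervals are disjoint and lie in an interval of length $O(\delta)$, so $\sum d_n^{2}=O(\delta^{2})$, and since $|z-\lambda_{2n-1}|\,|z-\lambda_{2n}|\ge1$ there they contribute $O(\delta^{2})$. Thus $K(\delta)=O(\delta+\delta^{2})$.

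Granting this, the conclusion follows quickly. On the lines $\Im z=\pm1$ we get
\[
|B(z)|\le\prod_{n}\Bigl(1+\frac{d_n^{2}}{|z-\lambda_{2n-1}|\,|z-\lambda_{2n}|}\Bigr)\le e^{K(\delta)},
\]
and since every element of $\PWa$ is bounded on each horizontal line, $G=BF$ is bounded, say by $M$, on $\Im z=1$ and on $\Im z=-1$. Being entire of finite exponential type and bounded by $M$ on the two boundary lines of the strip $\{|\Im z|\le1\}$, $G$ is bounded by $M$ throughout the strip by the Phragm\'en--Lindel\"of principle (concretely, apply the maximum modulus principle to $G(z)e^{-\varepsilon z^{2}}$ on large rectangles and let $\varepsilon\downarrow0$); in particular $|G|\le M$ on $\R$.

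The one genuinely delicate point is the uniform summability estimate. As $\Lambda$ is only required to have bounded gaps, it may contain arbitrarily tight clusters — even the zero set of a Paley--Wiener function can cluster — so the number of pairs near a given $z$ cannot be controlled directly. What saves the argument is that a cluster of pairs forces the corresponding $d_n$ to be small, and the crude inequality $\sum_{\text{pairs in an annulus}}d_n^{2}\le\tfrac{\delta}{2}\sum_{\text{pairs in an annulus}}d_n\le\tfrac{\delta}{2}\cdot(\text{length of the annulus})$, which uses only $d_n\le\delta/2$ and the disjointness of the pair-intervals, captures this compensation precisely. Everything else — entirety of $G$, finiteness of its exponential type, and the strip Phragm\'en--Lindel\"of argument — is routine; a slightly sharper version of the same computation in fact shows that $G$ has exponential type at most $a$, the informal statement that the construction preserves the type, though that is not needed here.
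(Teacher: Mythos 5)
Your proof is correct, but it takes a genuinely different route from the paper's. The paper stays on the real line: writing $u=2n_\Gamma-n_\Lambda$, it expresses $\log|G(t)|=-\tilde u(t)+\log|F(t)|+\mathrm{const}$ and estimates the conjugate function directly at a real point $x$, bounding the contribution of each far pair-interval by $\bigl|\ln\bigl(1-d_n^2/(\gamma_n-x)^2\bigr)\bigr|\lesssim |I_n|^2\,\mathrm{dist}^{-2}(x,I_n)$ and absorbing the intervals near $x$ into $F$ itself, using that $F(x)/\bigl((x-\lambda_{2k})(x-\lambda_{2k+1})\bigr)$ is bounded when $\lambda_{2k},\lambda_{2k+1}$ are zeros of the $\PWa$-function $F$ adjacent to $x$. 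You instead leave the real line: the same algebraic cancellation $(z-\gamma_n)^2-(z-\lambda_{2n-1})(z-\lambda_{2n})=d_n^2$ that drives the paper's far-interval estimate lets you bound the product $B$ on $\Im z=\pm1$, where the singularities at $\Lambda$ are invisible, and you return to $\R$ by Phragm\'en--Lindel\"of in the strip. Your summability estimate (disjointness of the pair-intervals plus $d_n\le\delta/2$ beats clustering) is the exact counterpart of the paper's estimate over $\R\setminus U$; what your route buys is that the near intervals never have to be handled at all, at the price of needing an a priori growth bound on $G$ inside the strip.

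That a priori bound is the one place where your write-up is imprecise. A genus-one canonical product over a real sequence of finite upper density need \emph{not} be of finite exponential type: Lindel\"of's theorem also requires the partial sums $\sum_{0<|\lambda|\le r}1/\lambda$ to stay bounded, and a bounded-gap subsequence of the zeros of a Paley--Wiener function can have different densities on the two half-lines (take $F(z)=\sin(\pi z)/(z-\tfrac12)$ and $\Lambda=\Z_{<0}\cup 2\Z_{>0}$), in which case the individual canonical products over $\Lambda$ and over $\Gamma$ have order one but infinite type. Fortunately you do not need finite type: finite upper density gives convergence exponent at most one, so those canonical products have order at most one, and since $G$ is entire and equals $F$ times a constant times $e^{\beta z}$ times their ratio, the minimum-modulus theorem applied to the denominator shows $G$ has order at most one. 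That is all your $e^{-\varepsilon z^2}$ maximum-principle argument requires, so the proof stands once ``finite exponential type'' is weakened to ``finite order'' at that step. (The ratio, and hence $G$, is in fact of exponential type because $2n_\Gamma-n_\Lambda$ is bounded and the two logarithmic imbalances cancel, but establishing that is extra work you do not need here.)
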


\begin{proof} Let us assume for the time being that the products in the definition of $G$ converge (we address convergence at the end of the proof).
 Define $u(t)=2n_{\G}(t)-n_{\L}(t)$.
On the real line we have
$$\log\abs{G(t)}=-\ti u(t) +\log\abs{F(t)}+\const.$$
Denote $I_n=(\l_{2n-1},\l_{2n})$ for $n$ positive and $I_n = (\l_{2n},\l_{2n+1})$ for $n$ negative.
Let $x$ be a real point.
Since $u$ forms an atom on each $I_n$, its harmonic conjugate can be calculated as
$$\ti u(x)=\frac 2\pi \int\frac{u(t)}{x-t}dt+\const,$$
i.e. the integral on the right converges.
 Let $U$ be the union of $I_n$ that intersect the interval $(x-1,x+1)$.
 Then for any $I_n\not\in U$, $n>0$,
 $$\left|\int_{I_n}\frac{u(t)}{x-t}dt\right|=\left|\ln\frac{(\l_{2n-1}-x)(\l_{2n}-x)}{(\gamma_n-x)^2}\right|=$$
 $$\left|\ln\left(1-\frac{(\l_{2n-1}-\l_{2n})^2}{4(\gamma_n-x)^2}\right)\right|<D |I_n|^2 dist^{-2}(x,I_n)$$
 for some positive constant $D$ depending only on $C=\sup_n\abs{\l_{n+1} -\l_n}$.
 A similar estimate can be written for $n<0$. Hence
$$\left|\int_{\R\setminus U}\frac{u(t)}{x-t}dt\right|=\left|\sum_{I_n\not\in U}\int_{I_n}\frac{u(t)}{x-t}dt\right|
<C_1.$$
Suppose that $U=I_n\cup I_{n+1}\cup ...\cup I_{N}$ for some $0<n\leq N$. Let $x>0$, $x\in [\gamma_k,\gamma_{k+1}]$, $n\leq k\leq N$.
 Notice that $\L\cap [\gamma_k,\gamma_{k+1}]=\{\l_{2k},\l_{2k+1}\}$ and
$$F(x)/(x-\l_{2k})(x-\l_{2k+1})<C_2$$
because $F(x)$ is a $\PWa$ function and $\l_{2k},\l_{2k+1}$ are its zeros.
Also notice that
$$\abs{\frac{\prod_{n\leq m\leq N} \left(x-\gamma_m\right)^2}{\prod_{2n-1\leq m\leq 2N,\ m\neq 2k, 2k+1} \left(x-\l_m\right)}}<(1+C)^2.$$
Altogether we obtain
$$ \abs{G(x)}=\abs{\frac{\prod_{\gamma\in\Gamma} \left(x-\gamma\right)^2}{\prod_{\l\in\L} \left(x-\l\right)}F(z)}=
$$$$\abs{
\exp{\left[-\frac 2\pi \int_{\R\setminus U}\frac{u(t)}{x-t}dt\right]}\frac{\prod_{n\leq m\leq N} \left(x-\gamma_m\right)^2}{\prod_{2n-1\leq m\leq 2N} \left(x-\l_m\right)}F(z)}\leq e^{C_1}C_2(1+C)^2<\const.$$

Note that our estimates of $\ti u$ show that the products in the definition of $G$ converge for $z\in\R\setminus\L$. Via  standard argument,
convergence on the line implies convergence on $\C\setminus \L$.

\end{proof}

Now we can prove our main result stated in the introduction. Recall that for a pair of closed sets of real numbers $A$ and $B$ the
gap characteristic is defined as
$$G(A,B)=\sup\{a>0: \MM_a(A,B)\neq\{0\}\}.$$

\oscillation*

\begin{proof} Assume that $\MM_a(A,B)$ contains a non-zero measure. By the previous theorem, it then must contain a non-zero extreme measure $\nu$ with discrete support $\L=\{\lan\}$ such that $\{\lambda_{2n}\}\subset A, \{\lambda_{2n+1}\}\subset B$.
By the main theorem from \cite{PolGap}, $\L$ contains a $(a/\pi-\e)$-uniform subsequence $\L'$. One can then choose a $(a/\pi-2\e)$-uniform subsequence $\L''$ of $\L'$ that alternates properly between the sets $A$ and $B$.
Hence we are done with  one of the inclusions.

In the opposite direction, let $\Lambda=\{\lan\}$ be a $d$-uniform sequence with $\{\lambda_{2n}\}\subset A, \{\lambda_{2n+1}\}\subset B$. Let $\e>0$ be arbitrary. We will construct a finite signed measure $\sigma \in \MM_{\pi d-2\epsilon}(A,B)$ which will finish the proof. From~\cite{PolGap} we know first that there exists a finite signed measure $\sigma_1$ supported on $\Lambda$ with a spectral gap of the size $2\pi d-\e$. Using the extreme point procedure from above if needed we can assume that this measure $\sigma_1$ oscillates between the consecutive points in its support. More precisely, if we denote the support of $\sigma_1$ by $\L':=\{\l'_n\}\subset \Lambda$, we have $\sigma_1(\l'_n)\sigma_1(\l'_{n+1})<0$ for all $n$. This measure however may not be in $\MM_{\pi d-\epsilon}(A,B)$ since we do not know that $\L'$ alternates between the sets $A$ and $B$.

Consider the intervals $(\l'_n, \l'_{n+1})$ which contain odd number of points from $\Lambda$. These are exactly the bad intervals since in this case $\l'_n$ and $\l'_{n+1}$ are both in $A$ or both in $B$ even though $\sigma_1$ has opposite sign at these points. To fix this form a sequence $\Gamma$ by picking one point from $\L$ in each such interval. Then $D_{BM}^+(\Gamma)\leq D_{BM}^+(\L\setminus \L')\leq \e$. By the Beurling-Malliavin theorem there exists an entire function $F(z)\in \PP\WW_{\e}$ which vanishes at $\Gamma$. We may assume that $F$ is real on the real line (otherwise take $F+F^\#$).
If the real zero set of $F(z)$ is exactly $\Gamma$, then we will be done by taking $d\sigma:=Fd\sigma_1$. However, $F(z)$ may have additional real zeros. Denote by $\Gamma'$ these zeroes. Without loss of generality we can assume that $\Gamma'$ is an infinite sequence with bounded gaps (this condition will be needed to apply the last lemma).
Otherwise, we can add an arithmetic progression of density $\epsilon$ to the zero set of $F(z)$.

Form the sequence $\Delta=\{\delta_n\}$ of the arithmetic means of consecutive pairs in $\Gamma'$, like in the last lemma.
We can now apply the last lemma to replace the unwanted real zeros of $F(z)$ with double zeros and obtain an entire function $G(z)\in\BB_{\e}$ such that $G$ is real and bounded on $\R$,
the zero set of $G$ on $\R$ is exactly  $(\Gamma\setminus\Gamma')\cup \Delta$ and $G$ has double zeros on $\Delta$. 
Consider the measure $d\sigma=Gd\sigma_1$. Since $G$ is bounded and real on $\R$ this is a finite signed measure. The fact that $G$ is of exponential type no greater than $\e$ implies that $\sigma$ has a spectral gap of the size $2\pi d-3\e$. In addition, in each of the bad intervals $(\l'_n,\l'_{n+1})$ the function $G$ has exactly one simple zero and the rest of its zeros are double zeros. Therefore, $\sigma$ has opposite sign point masses at $\l'_n$ and $\l'_{n+1}$ and has the same good behavior as $\sigma_1$ on the rest of $\L'$. Thus, $\sigma\in \MM_{\pi d-2\epsilon}(A,B)$.
\end{proof}

As a consequence of the oscillation theorem we derive the following result stated in the introduction.

\DetGap*


\begin{proof}  Let $Det(X)=d$. By definition, for any $\e>0$ there exists a $(d-\e)$-indeterminate measure $\mu$ with a support included in $X$. By Theorem~\ref{indeterminate} we have that $\supp \mu$ contains a $(d-\e)/\pi$-uniform sequence. Therefore, by the gap theorem in~\cite{PolGap} we have that $G(X)\geq 2(d-\e)$. Hence, $G(X)\geq 2Det(X)$.

To prove the other inequality let $G(X)=a$. By the gap theorem, it follows that $X$ contains an $(a-\e)/\pi$-uniform sequence $\G=\{\gamma_n\}$.
 Let $\L=\G\cup \G'$, where $\gamma'_n=(\gamma_n+\gamma_{n+1})/2$. Then $\L$ is a $2(a-\e)/\pi$-uniform sequence and therefore, by theorem~\ref{oscillation},
 $ \MM_{2 a-10\e}(\G,\G')$ contains a non-zero measure. The positive part of this signed measure is $2(a-5\e)/\pi$-determinate and supported on $\G\subset X$. Thus,
  $2Det(X)\geq G(X)$.
\end{proof}

\section{Sign changes of measures with spectral gap}\label{last}

We say that a real measure $\sigma$ has at least one sign change on an interval $(a,b)$ if  there exist Borel sets $P, N\subset [a,b]$ such that $\sigma(P)>0$ and $\sigma(N)<0$. In \cite{EN, EN1}
the number of sign changes of $\sigma$ on an interval $(a,b)$ is  was defined as the minimal degree of a polynomial $p$ for which $pd\sigma$ is a positive measure on $[a,b]$. Clearly one can use either one of these definitions in the statements below.

As a consequence of our oscillation theorem we obtain an improvement of the main result in~\cite{EN, EN1}:

\begin{restatable}{theorem}{Eremenko} \label{Eremenko} If $\sigma$ is a nonzero signed measure with spectral gap $(-a, a)$  then there exists an $a/\pi$-uniform sequence $\{\lambda_n\}$ such that  $\sigma$ has at least one sign change in every $(\lambda_n, \lambda_{n+1})$.
\end{restatable}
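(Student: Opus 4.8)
The plan is to deduce Theorem~\ref{Eremenko} from Theorem~\ref{oscillation} by letting the target sets $A$ and $B$ absorb the positive and negative parts of $\sigma$. Concretely, given a nonzero signed measure $\sigma$ with spectral gap $(-a,a)$, set $A=\supp\sigma^+$ and $B=\supp\sigma^-$. After normalizing the total variation, $\sigma$ (or rather $\sigma$ rescaled) lies in $\MM_{a}(A,B)$, so $\MM_a(A,B)\neq\{0\}$; hence $G(A,B)\geq a$. By Theorem~\ref{oscillation} this forces the existence, for every $\e>0$, of a $(a/\pi-\e)$-uniform sequence $\L=\{\lambda_n\}$ which alternates between $A$ and $B$, i.e. $\{\lambda_{2n}\}\subset A$ and $\{\lambda_{2n+1}\}\subset B$. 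Applying Lemma~\ref{uff} to the family of these sequences (using the alternation clause of that lemma) produces a single $(a/\pi)$-uniform sequence $\L$ that still alternates between $A$ and $B$.

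The remaining point is purely measure-theoretic: an alternating sequence whose even-indexed terms sit in $\supp\sigma^+$ and odd-indexed terms in $\supp\sigma^-$ forces a sign change of $\sigma$ in each gap $(\lambda_n,\lambda_{n+1})$. Indeed, consider a consecutive pair $\lambda_{2n}\in\supp\sigma^+$, $\lambda_{2n+1}\in\supp\sigma^-$ (the other parity is symmetric). There is a small interval around $\lambda_{2n}$ on which $\sigma^+$ has positive mass, and by discarding the part that overlaps $\supp\sigma^-$ we may take this interval to be contained in $(\lambda_{2n},\lambda_{2n+1})$ and to carry positive $\sigma$-mass; likewise near $\lambda_{2n+1}$ we find a subinterval of $(\lambda_{2n},\lambda_{2n+1})$ carrying negative $\sigma$-mass. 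This exhibits Borel sets $P,N\subset(\lambda_{2n},\lambda_{2n+1})$ with $\sigma(P)>0$, $\sigma(N)<0$, which is exactly the definition of at least one sign change on that interval given at the start of Section~\ref{last}. (If one prefers the polynomial definition of sign changes, the same configuration of alternating positive/negative mass still forces the minimal degree of a polynomial $p$ with $p\,d\sigma\geq 0$ on $(\lambda_n,\lambda_{n+1})$ to be at least one.)

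I expect the main obstacle to be a mild bookkeeping issue rather than a conceptual one: the supports $\supp\sigma^+$ and $\supp\sigma^-$ need not be disjoint, whereas Theorem~\ref{oscillation} and the machinery of Section~\ref{extreme} are phrased for disjoint closed sets $A$ and $B$. Since $\sigma^+\perp\sigma^-$, the overlap of the two supports is $\sigma$-null, so one can slightly shrink one of them (or note that the alternating sequence extracted from Theorem~\ref{oscillation} may be chosen to avoid the overlap, since points in the overlap carry no $\sigma$-mass and hence are not forced to be in the support of the extreme measure). A clean way to handle this is to observe that the extreme measure $\eta\in\MM_a(A,B)$ produced in the proof of Theorem~\ref{oscillation} is discrete and supported on a set interlacing between $A$ and $B$ where it has genuine point masses, so its support already avoids the $\sigma$-null overlap; one then runs Theorem~\ref{oscillation} with $A'=\supp\sigma^+$, $B'=\supp\sigma^-\setminus\supp\sigma^+$, which are disjoint, and $\MM_a(A',B')$ still contains the rescaled $\sigma$. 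With that adjustment the argument goes through, and the alternation of the $(a/\pi)$-uniform sequence between $A'$ and $B'$ delivers the sign change in each $(\lambda_n,\lambda_{n+1})$.
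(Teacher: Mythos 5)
Your overall route is the paper's own: set $A=\supp\sigma^+$, $B=\supp\sigma^-$, note that (after rescaling) $\sigma\in\MM_a(A,B)$, and extract an $a/\pi$-uniform sequence alternating between $A$ and $B$. One small imprecision: Lemma~\ref{uff} requires the $(a/\pi-\e)$-uniform sequences to all be subsequences of a \emph{single} discrete sequence, which Theorem~\ref{oscillation} used as a black box does not provide; the paper gets this by first passing to a nonzero extreme measure $\eta\in\MM_a(A,B)$ (Lemma~\ref{KrMil}), whose discrete support is the common ambient sequence from which all the $(a/\pi-\e)$-uniform alternating subsequences are drawn.

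The substantive gap is in your final, ``purely measure-theoretic'' step. From $\lambda_{2n}\in\supp\sigma^+$ you conclude that some subinterval of $(\lambda_{2n},\lambda_{2n+1})$ carries positive $\sigma$-mass; this does not follow. Membership in the closed support only says that every \emph{two-sided} neighborhood of $\lambda_{2n}$ has positive $\sigma^+$-measure, and all of that mass may lie strictly to the left of $\lambda_{2n}$ (for instance $\sigma^+$ charging the points $\lambda_{2n}-1/k$, so that $\lambda_{2n}$ is an accumulation point of the support with $\sigma^+([\lambda_{2n},\lambda_{2n+1}])=0$). In that case the interval $(\lambda_{2n},\lambda_{2n+1})$ need carry no positive mass at all, and your ``discard the overlap with $\supp\sigma^-$'' manoeuvre addresses a different (and harmless) issue without touching this one. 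The paper closes exactly this hole with an extra step: it perturbs the points $\lambda_n$ so that $\sigma^+$ and $\sigma^-$ genuinely charge each interval $[\lambda_n,\lambda_{n+1}]$ --- near each $\lambda_{2n}$ one can relocate the point so as to leave positive $\sigma^+$-mass on both sides of it (trivial if $\lambda_{2n}$ is an atom, and by splitting the nearby mass otherwise) --- and such arbitrarily small perturbations preserve $a/\pi$-uniformity. With that repair your argument coincides with the paper's proof.
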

\noindent

\begin{proof} Let $A=\supp\mup$, $B=\supp\mum$.  Here, as usual, $\mup$ and $\mum$ denote the positive and negative parts of $\sigma$ in the canonical Jordan decomposition $\sigma=\mup-\mum$. Then $\sigma\in\MM_a(A,B)$. It follows from Lemma~\ref{KrMil} that $\MM_a(A,B)$ contains an extreme point (measure).  By Theorem~\ref{oscillation} this measure is discrete and its support contains a $(a/\pi-\e)$-uniform subsequence for any $\e>0$. Using lemma~\ref{uff}, we can choose an $a/\pi$-uniform subsequence $\L=\{\lambda_n\}$ such that  $\{\lambda_{2n}\}\subset A, \{\lambda_{2n+1}\}\subset B$. Now one can move the points $\lan$, if necessary, so that the sets $A=\supp\mup$ and $B=\supp\mum$ intersect each
interval $[\lambda_n, \lambda_{n+1}]$ and the sequence $\L$ remains $a/\pi$-uniform.
Then $\sigma$ has at least one sign change on each of the intervals $(\lambda_n, \lambda_{n+1})$.
\end{proof}

Next, we  deduce the main result of~\cite{EN, EN1} from corollary~\ref{poissonsummablecor}.

\ErNov*

\begin{proof} Let $\Lambda=\{\lambda_n\}$ be the sequence from corollary \ref{poissonsummablecor}. It is clear that $s(r,\sigma)\geq n_{\Lambda}(r)$, where $n_{\Lambda}(r)$ is the  counting function of $\Lambda$. The fact that $\pi n_\Lambda(x)-\frac a2 x$ is harmonic conjugate of a Poisson-summable function implies that
$$\Pi(\{|\pi n_\Lambda(x)-\frac a2 x|>t\})=o(t)\ \ \textrm{ as }t\to\infty.$$
Since $n_\L$ is a growing function, the last relation implies that $\pi n_\Lambda(x)-\frac a2 x=o(x)$. Therefore $$\lim_{r\to\infty}\frac{n_{\Lambda}(r)}{r}= \frac{a}{2\pi}.$$
\end{proof}
\begin{remark*}
Notice that to prove the last statement we did not use any difficult results of Harmonic Analysis such as the Beurling-Malliavin theory or the
results of \cite{PolGap, PolType}. All in all, our proof seems to be shorter and more elementary than the original proof given in~\cite{EN}.
\end{remark*}

\begin{bibdiv}
\begin{biblist}

\bib{Aleksandrov}{article} {
    author={Alexandrov, A.},
    title={ Isometric embeddings of coinvariant subspaces of the shift operator},
journal={ Zap. Nauchn. Sem. S.-Peterburg. Otdel. Mat. Inst. Steklov. (POMI)},
series={Issled. po Linein Oper. i Teor. Funktsii., 24, 213},
volume={232},
date={1996},
pages={ 5--15},
series={J. Math. Sci. (New York),  92, 1998, no. 1, 3543--3549},
}

\bib{Arn}{book}{
   title={Arnold's Problems},
   publisher={Fazis},
   place={Moscow},
   language={Russian},
   date={2000},
}

\bib{Ben}{article}{
    author={Benedicks, M.}
   title={The support of functions and distributions with a spectral gap},
  journal={Math. Scand.},
   volume={55},
   date={1984},
   pages={ 285--309}

}

\bib{Beu}{book}{
   author={Beurling, A.},
   title={On quasianalyticity and general distributions},
   series={Mimeographed lecture notes},
   publisher={Summer Institute},
   place={Stanford},
   date={1961},
}

\bib{BM1}{article}{
   author={Beurling, A.},
   author={Malliavin, P.},
   title={On Fourier transforms of measures with compact support},
   journal={Acta Math.},
   volume={107},
   date={1962},
   pages={291--309}
}

\bib{BM}{article}{
   author={Beurling, A.},
   author={Malliavin, P.},
   title={On the closure of characters and the zeros of entire functions},
   journal={Acta Math.},
   volume={118},
   date={1967},
   pages={79--93}
}

\bib{dBrSW}{article}{
   author={de Branges, L.},
   title={The Stone-Weierstrass theorem},
   journal={Proc. Amer. Math. Soc.},
   volume={10},
   date={1959},
   pages={822--824}
}

\bib{dBrB}{article}{
   author={de Branges, L.},
   title={The Bernstein problem},
   journal={Proc. Amer. Math. Soc.},
   volume={10},
   date={1959},
   pages={825--832}
}

\bib{dBrA}{article}{
   author={de Branges, L.},
   title={Some applications of spaces of entire functions},
   journal={Canad. J. Math.},
   volume={15},
   date={1963},
   pages={563--583}
}

\bib{dBr}{book}{
   author={de Branges, L.},
   title={Hilbert spaces of entire functions},
   publisher={Prentice Hall},
   place={Englewood Cliffs},
   date={1968},
   pages={x+326}
}

\bib{EN}{article}{
   author={Eremenko, A.},
   author={Novikov, D.},
   title={Oscillation of Fourier integrals with a spectral gap},
   journal={J. de Math. Pures Appl.},
   volume={83},
   date={2004},
   pages={313--365}
}

\bib{EN1}{article}{
   author={Eremenko, A.},
   author={Novikov, D.},
   title={Oscillation of Fourier integrals with a spectral gap},
   journal={Proc. Acad. Nat. Sci.},
   volume={101},
   date={2004},
   pages={5872--5873}
}

\bib{Koo}{book}{
   author={Koosis, P.},
   title={The logarithmic integral I},
   volume={12},
   publisher={Cambridge University Press},
   place={Cambridge},
   date={1988},
   pages={x+574}
}

\bib{Koo2}{book}{
   author={Koosis, P.},
   title={The logarithmic integral II},
   volume={21},
   publisher={Cambridge University Press},
   place={Cambridge},
   date={1992},
   pages={x+574}
}

\bib{Krein3}{article}{
   author={Krein, M. G.},
   title={On the problem of extension of Herimite-positive continuous functions},
   journal={Soviet Academy of Sciences, Doklady},
   volume={XXVI, N1},
   date={1940},
   language={Russian}
}

\bib{Lev}{book}{
   author={Levinson, N.},
   title={Gap and density theorems},
   publisher={American Mathematical Society},
   place={New York},
   date={1940},
   pages={8+246}
  }

\bib{Nik}{book}{
   author={Nikolski, N. K.},
   title={Treatise on the shift operator},
   publisher={Springer-Verlaag},
   place={Berlin},
   date={1986},
  }

\bib{PolGap}{article}{
   author={Poltoratski, A.},
   title={Spectral gaps for sets and measures},
   journal={Acta Math.},
   volume={208},
   date={2012},
   pages={313--365}
}

\bib{PolType}{article}{
   author={Poltoratski, A.},
   title={Problem on completeness of exponentials},
    journal={Ann. Math.},
     volume={178},
   pages={983--1016},
   date={2013}
}

\bib{PolSar}{article}{
   author={Poltoratski, A.},
   author={Sarason, D.},
   title={Aleksandrov-Clark measures},
    journal={Contemp. Math.},
    series={Recent advances in operator-related function theory},
    publisher={Amer. Math. Soc.}
    volume={393},
   pages={1--14},
   date={2006}
}

\bib{Sod}{book}{
   author={Sodin, M.},
   title={de~Branges spaces},
   series={Unpublished lecture notes},
   publisher={CRM Institute},
   place={Barcelona},
   date={2011},
}

\end{biblist}
\end{bibdiv}

\end{document}